\author{Claire Amiot}
\address{Institut Fourier, 100 rue des maths, 38402 Saint Martin d'H\`eres}
\email{claire.amiot@univ-grenoble-alpes.fr}
\author{Thomas Br\"ustle}
\address{Bishop's University, 2600 College Street,
Sherbrooke Qu\'ebec J1M 1Z7}
\email{tbruestl@ubishops.ca}
\address{Universit\'e de Sherbrooke, 2500, boul. de l'Universit\'e,
Sherbrooke Qu\'bec J1K 2R1}
\email{Thomas.Brustle@USherbrooke.ca}
\thanks{Most of this work was done while the first author was in UMI 3457 Centre de Recherche Math\'ematiques de Montr\'eal, D\'epartement de math\'ematique de l'Universit\'e de Sherbrooke. She would like to thank the CNRS and UMI 3457 for financial support and Sherbrooke University for warm hospitality. She was also supported by the French ANR grant SC3A (ANR-15-CE40-0004-01).
The second author was supported by Bishop's University and NSERC of Canada.}
\numberwithin{equation}{section}
\newtheorem{theorem}{Theorem}[section]
\newtheorem{lemma}[theorem]{Lemma}
\newtheorem{corollary}[theorem]{Corollary}
\newtheorem{proposition}[theorem]{Proposition}
\theoremstyle{remark}
\newtheorem{remark}[theorem]{Remark}
\newtheorem{example}[theorem]{Example}
\theoremstyle{definition}
\newtheorem{definition}[theorem]{Definition}
\DeclareMathOperator{\add}{add}
\DeclareMathOperator{\Hom}{Hom}
\DeclareMathOperator{\End}{End}
\DeclareMathOperator{\thick}{thick}
\renewcommand{\mod}{\operatorname{mod}}
\newcommand{\surf}{\mathcal{S}}
\newcommand{\orbifold}{\overline{\mathcal S}}
\newcommand{\dual}{\widehat{G}}
\newlength{\dhatheight}
\newcommand{\doublehat}[1]{%
    \settoheight{\dhatheight}{\ensuremath{\widehat{#1}}}%
    \addtolength{\dhatheight}{-0.05ex}%
    \widehat{\vphantom{\rule{1pt}{\dhatheight}}%
    \smash{\!\widehat{#1}}}}
\newcommand{\lten}[1]{\underset{#1}{\overset{\textbf{L}}{\otimes}}}
\newcommand{\ten}[1]{\underset{#1}{\otimes}}
\newcommand{\smallten}{{\scriptstyle \otimes}}
\definecolor{dark-green}{RGB}{14,150,2}
\newcommand{\gpoint}{\color{dark-green}{\circ}}
\newcommand{\rpoint}{{\red \bullet}}
\newcommand{\cross}{{\sf \red x}}
\newcommand{\Z}{\mathbb Z}
\newcommand{\grading}{\mathbf{n}}
\title[Derived equivalences between skew-gentle algebras]{Derived equivalences between skew-gentle algebras using orbifolds}
\begin{document}
\maketitle
\begin{abstract}
Skew-gentle algebras are skew-group algebras of gentle algebras equipped with  a certain $\Z_2$-action. Building on the bijective correspondence between gentle algebras and dissected surfaces, we obtain in this paper a bijection between skew-gentle algebras and certain dissected orbifolds that admit a double cover. 

We prove the compatibility of the $\Z_2$-action on the double cover with the skew-group algebra construction.  This allows us to investigate the derived equivalence relation between skew-gentle algebras in geometric terms: We associate to each skew-gentle algebra a line field on the orbifold, and on its double cover, and interpret different kinds of derived equivalences of skew-gentle algebras in terms of diffeomorphisms respecting the homotopy class of the line fields associated to the algebras. 
\end{abstract}

\tableofcontents
\section{Introduction}
Gentle algebras, introduced in the 80's \cite{AsSk}, provide an example of a class of algebras  whose derived category can be described explicitly (\cite{BM} and \cite{BuDr}).  The class of gentle algebras  contains all finite dimensional path algebras of type $\mathbb A$ and $\widetilde{\mathbb{A}}$ and has been shown to be stable under derived equivalences \cite{SchZi}. 
More recently, gentle algebras have been found to be deeply and surprisingly connected to the combinatorics and geometry of marked surfaces: 
The Jacobian algebra of a triangulation of an unpunctured surface $(\surf,M)$ is a gentle algebra \cite{ABCP,LF}. Thus certain gentle algebras appear as endomorphism ring of  cluster-tilting objects in the cluster category $C(\surf,M)$ associated in \cite{Am} to the cluster algebra of a marked surface $(\surf,M)$ without punctures defined in \cite{FST}.  Building on this, \cite{BZ} provide a geometric model for the objects in the cluster category $C(\surf,M)$ associating strings and bands with curves and closed curves. 

Obviously, triangulations of surfaces yield only certain gentle algebras. This shortcoming has been overcome in \cite{BCS} and \cite{OPS} by relating every gentle algebra to a dissection of a marked surface, cutting $(\surf,M)$ into polygons. Using this correspondence \cite{BCS} give a geometric description of the module category of a gentle algebra, while \cite{OPS} provide a description of its derived category. Note that a link between gentle algebras and ribbon graphs, thus again surfaces, already appeared in \cite{Sch}.

Independently, \cite{HKK} establish a description of the (partially wrapped) Fukaya category of a surface $\surf$ with stops using the derived category of a (graded) gentle algebra associated to these data, also given by a dissection of $\surf$, see also \cite{LP} for discussion of the derived equivalences. 

Combining results in \cite{OPS} and \cite{LP}, a geometric interpretation of the derived equivalence relation for gentle algebras is given in \cite{APS} and \cite{Opper}.

\medskip
We aim in this paper to extend these results to orbifolds $\bar{\surf}$ admitting a two-fold cover. The two-fold cover $\surf$ corresponds to a gentle algebra which comes equipped with a $\Z_2$-action. The corresponding skew-group algebra is studied in \cite{GePe}, called skew-gentle algebra. This class of algebras contains in particular all path algebras of type $\mathbb D$ and $\widetilde{\mathbb D}$. In fact, these algebras had been studied earlier under the name clannish algebra in \cite{CB}, motivated by a matrix problem notion of clan, see also  \cite{De}, but the viewpoint of skew group algebra allows to use general results from \cite{ReitenRiedtmann}.
We employ this point of view, where a description of the derived category of a skew-gentle algebra can be obtained using the $\Z_2$-action, and the known results for gentle algebras.

Looking back to the cluster algebra of a triangulated surface, the orbifold points correspond to punctures, and the fact that the Jacobian algebra admits a $\Z_2$-action corresponds to having all orbifold points lying in a self-folded triangle. This case has been studied in \cite{GL-FS}, including a deformation argument similar to the one employed in \cite{Br2} which reduces the study to a gentle algebra.
The description of the cluster category for punctured surfaces with skew-gentle algebras has been given in \cite{QZ} using orbifolds, and in \cite{AP} using a $\Z_2$-action on the category and on the surface. 
We follow in this paper a similar approach to the one in \cite{AP}, generalizing it to study the derived category in the case of an orbifold allowing a dissection such that all orbifold points are uniquely connected by an arc to the boundary (this is the polygonal equivalent of the self-folded triangle in the cluster situation) 

Of course, the class of skew-gentle algebras is not stable under derived equivalences, not even the simplest case of type ${\mathbb D}$ satisfies this. It is however natural to ask the following  question:

\emph{ What is the geometric interpretation of the derived equivalence relation for skew-gentle algebras ? }

Furthermore, keeping track of the $\mathbb Z_2$-action, we can refine the question to $\mathbb Z_2$-derived equivalence relations. These are the two main questions we address in this paper. 

\medskip

{\em Organization and main results of the paper:} 
We first study some general properties of $G$-invariant objects in the derived category of an algebra $\Lambda$, for some finite group $G$ acting on $\Lambda$. More precisely, we study the $G$-invariant tilting objects in the derived category of $\Lambda$, and relate them with the $\dual$-invariant tilting objects of the derived category of the skew group algebra $\Lambda G$.

In section 3 we introduce the class of skew-gentle algebras, describing their quiver and relations and various properties. We then provide a geometric model for skew-gentle algebras using certain dissections of a surface that we call $\cross$-dissections. This simultaneously generalizes results from \cite{OPS} for the gentle case, and  from \cite{LF} for triangulations (where each puncture is in a selfolded triangle) of a punctured surface. 

In section 4, we study the $\Z_2$-action, both on the algebraic side of the skew-gentle algebras, and on their geometric realizations. 
To any dissected surface which is invariant under the action of an order-$2$ diffeomorphism (with finitely many fixed points), we associate

\begin{itemize}
\item a gentle algebra $\Lambda$ together with a $\Z_2$ action;
\item and an orbifold together with a $\cross$-dissection.
\end{itemize}
We then show that the skew-gentle algebra corresponding to the $\cross$-dissection is Morita equivalent to the skew-group algebra $\Lambda \Z_2$.
Conversely, given a skew-gentle algebra, we construct a $2$-folded cover of the corresponding orbifold that satisfies the above properties. This construction combined with the results of section 2 permits us to prove that two skew-gentle algebras are $\Z_2$-derived equivalent if and only if their corresponding gentle algebras are $\Z_2$-derived equivalent.

Section 5 generalizes results from \cite{APS} to the setting of orbifolds with a $\Z_2$-cover.
We equip the $2$-folded cover $(\surf,\sigma)$ associated with a skew-gentle algebra $\bar{\Lambda}$ with a $\sigma$-invariant line field $\eta$. We then adapt the results in \cite{APS} to the $\Z_2$-action setting and give a complete answer to the second question asked above:
\begin{theorem}(\ref{theorem::dual-derived equivalence}) 
Two skew-gentle algebras $\bar{\Lambda}$ and $\bar{\Lambda}'$ are $\Z_2$-derived equivalent if and only if there exists a diffeomorphism  between their corresponding $2$-folded covers commuting with the $\Z_2$-action and sending $\eta$ to $\eta'$ up to homotopy.
\end{theorem}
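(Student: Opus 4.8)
The plan is to deduce the statement by combining two facts established earlier in the paper: the reduction of $\Z_2$-derived equivalences of skew-gentle algebras to $\Z_2$-derived equivalences of the underlying gentle algebras on the double cover (Section 4), and a $\sigma$-equivariant enhancement of the geometric classification of derived equivalences of gentle algebras from \cite{APS} (Section 5). Writing $\bar{\Lambda}=\Lambda\Z_2$ and $\bar{\Lambda}'=\Lambda'\Z_2$ for the skew-group presentations, and letting $(\surf,\sigma)$, $(\surf',\sigma')$ denote the $2$-folded covers carrying the $\sigma$-invariant line fields $\eta$, $\eta'$, the reduction result of Section 4 says that $\bar{\Lambda}$ and $\bar{\Lambda}'$ are $\Z_2$-derived equivalent if and only if $\Lambda$ and $\Lambda'$ are $\Z_2$-derived equivalent. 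Hence it suffices to prove the analogue at the level of the gentle algebras on the cover: $\Lambda$ and $\Lambda'$ are $\Z_2$-derived equivalent if and only if there is a diffeomorphism $\surf\to\surf'$ commuting with the involutions $\sigma,\sigma'$ and carrying $\eta$ to $\eta'$ up to homotopy.

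For this reduced statement I would run the correspondence of \cite{APS} equivariantly. Recall that for gentle algebras the homotopy class of the line field on the surface is a complete invariant of the derived equivalence class, a derived equivalence being realized by a tilting complex which corresponds geometrically to an admissible dissection obtained from a mutation sequence. For the forward direction, a $\Z_2$-derived equivalence is by definition realized by a $\Z_2$-invariant tilting complex; I would show that $\Z_2$-invariance of the tilting complex forces the associated dissection of $\surf$ to be $\sigma$-invariant, so that the resulting diffeomorphism $\surf\to\surf'$ can be chosen to intertwine $\sigma$ and $\sigma'$. Since the line field is the \cite{APS} invariant attached to the derived equivalence class, this diffeomorphism automatically sends $\eta$ to $\eta'$ up to homotopy. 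For the converse, a $\sigma$-equivariant diffeomorphism sending $\eta$ to $\eta'$ produces by \cite{APS} a derived equivalence between $\Lambda$ and $\Lambda'$; because it commutes with the involutions, the corresponding tilting complex can be arranged to be $\Z_2$-invariant, yielding a $\Z_2$-derived equivalence of the gentle algebras, and then feeding this back through Section 4 gives the desired $\Z_2$-derived equivalence of $\bar{\Lambda}$ and $\bar{\Lambda}'$.

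The main obstacle I expect is the equivariance bookkeeping in both directions, that is, upgrading the (non-equivariant) dictionary of \cite{APS} between tilting complexes and diffeomorphisms so that $\Z_2$-invariance on the algebraic side corresponds \emph{exactly} to $\sigma$-equivariance on the geometric side. Concretely, one must check that a $\Z_2$-invariant tilting complex is representable by a $\sigma$-invariant collection of graded curves, that conversely a $\sigma$-invariant dissection gives a $\Z_2$-invariant tilting complex, and that a homotopy between the $\sigma$-invariant line fields can itself be taken $\sigma$-equivariant. Since $\sigma$ has only finitely many fixed points, equivariant transversality and averaging arguments should make these choices compatible; the delicate case is the behaviour near the fixed points of $\sigma$, which correspond to the orbifold points, where one must ensure the constructions descend correctly to the orbifold $\orbifold$ and that the invariance of the tilting data matches the $\dual$-action of Section 2 under the identification $\bar{\Lambda}=\Lambda\Z_2$.
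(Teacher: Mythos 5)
Your proposal is correct and follows essentially the same route as the paper: the paper likewise reduces via Corollary \ref{cor::Geq=G-hat-eq-gentle} to a $G$-derived equivalence of the gentle algebras on the double cover, and then applies its equivariant version of the APS dictionary (Theorem \ref{thm::G-tilting=G-dissection}), together with Proposition \ref{prop::bijection-G-gentle-dissection}, to translate $G$-invariant tilting objects into $\sigma$-invariant dissections and $G$-diffeomorphisms matching winding numbers. The only superfluous worry in your plan is the equivariance of the homotopy between line fields: since the comparison is carried out through winding numbers, which depend only on the ordinary homotopy class of a line field, no $\sigma$-equivariant homotopy is ever needed.
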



Finally, we give a geometric interpretation of the derived equivalence relation for skew-gentle algebras when the equivalence is given by a $\Z_2$-invariant tilting object. The $\Z_2$-invariant line field $\eta$ of the double cover induces a line field $\bar{\eta}$ on the orbifold, and we have the following characterization:

\begin{theorem}(\ref{thm::derived-skew-gentle})
Two skew-gentle algebras $\bar{\Lambda}$ and $\bar{\Lambda} '$ are derived equivalent via a $\Z_2$-invariant tilting object if and only if there exists a diffeomorphism between their corresponding orbifolds sending $\bar{\eta}$ to $\bar{\eta}'$ up to homotopy.
\end{theorem}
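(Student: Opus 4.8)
The plan is to lift the whole problem to the two-fold cover, solve it there with the already available gentle-algebra machinery, and then descend the conclusion to the orbifold. Write $\Lambda,\Lambda'$ for the gentle algebras attached to the covers $(\surf,\sigma),(\surf',\sigma')$ of the orbifolds $\overline{\surf},\overline{\surf}'$ of $\bar\Lambda,\bar\Lambda'$, so that $\bar\Lambda$ is Morita equivalent to $\Lambda\Z_2$ and $\bar\Lambda'$ to $\Lambda'\Z_2$, and recall that $\eta$ is the $\sigma$-invariant line field on $\surf$ projecting to $\bar\eta$ on $\overline{\surf}$. The first step is purely algebraic and uses Section~2: under the correspondence between $\Z_2$-invariant tilting objects of $D^b(\Lambda)$ and $\dual$-invariant tilting objects of $D^b(\Lambda\Z_2)\simeq D^b(\bar\Lambda)$, a derived equivalence $\bar\Lambda\simeq\bar\Lambda'$ furnished by a $\Z_2$-invariant (that is, $\dual$-invariant) tilting object is the same datum as a $\Z_2$-invariant tilting object $T\in D^b(\Lambda)$ with $\End(T)\cong\Lambda'$. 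Hence the left-hand side of the statement becomes: the gentle algebras $\Lambda$ and $\Lambda'$ are derived equivalent through a \emph{$\sigma$-invariant} tilting object.

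For the forward implication I would work first on the honest surface $\surf$, where \cite{OPS} and \cite{APS} apply directly: the equivalence given by $T$ is realized by a diffeomorphism $\phi\colon\surf\to\surf'$ with $\phi(\eta)\simeq\eta'$ up to homotopy. The decisive point is to exploit the $\sigma$-invariance of $T$. It forces the dissection of $\surf$ determined by $T$ to be $\sigma$-invariant, hence to descend to a $\cross$-dissection of the orbifold, and it permits choosing $\phi$ to be $\sigma$-equivariant, $\phi\sigma=\sigma'\phi$. Such an equivariant diffeomorphism passes to the quotients and yields a diffeomorphism $\bar\phi\colon\overline{\surf}\to\overline{\surf}'$ of orbifolds; since $\eta,\eta'$ are $\sigma$-invariant and project to $\bar\eta,\bar\eta'$, the homotopy $\phi(\eta)\simeq\eta'$ projects to $\bar\phi(\bar\eta)\simeq\bar\eta'$ up to homotopy in $\overline{\surf}$, which is the diffeomorphism required by the statement.

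The converse traverses the same dictionary in reverse. Starting from a diffeomorphism $\bar\phi\colon\overline{\surf}\to\overline{\surf}'$ with $\bar\phi(\bar\eta)\simeq\bar\eta'$, I would lift it through the branched double covers to a $\sigma$-equivariant diffeomorphism $\phi\colon\surf\to\surf'$; the lift exists and intertwines the deck involutions because each cover is determined by its orbifold together with the branch locus, and $\bar\phi$ maps orbifold points to orbifold points. Lifting the homotopy gives $\phi(\eta)\simeq\eta'$ in $\surf$, so by \cite{APS} the diffeomorphism $\phi$ realizes a derived equivalence $\Lambda\simeq\Lambda'$ through some tilting object $T$, and the $\sigma$-equivariance of $\phi$ renders the associated dissection, hence $T$, $\sigma$-invariant. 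Running the Section~2 correspondence once more turns this $\Z_2$-invariant tilting object into a $\dual$-invariant tilting object realizing $\bar\Lambda\simeq\bar\Lambda'$, which completes the equivalence.

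The principal difficulty, and the exact point where this statement departs from Theorem~\ref{theorem::dual-derived equivalence}, lies in controlling the line fields across the branched cover at the orbifold points. Away from the branch locus the projection $\surf\to\overline{\surf}$ is an ordinary double cover, and winding numbers of $\eta$ and $\bar\eta$ transport transparently; but a small loop around an orbifold point lifts to a path exchanging the two sheets rather than to a closed curve (locally the cover is $z\mapsto z^2$), so the orbifold-homotopy class of $\bar\eta$ records winding data around the orbifold points that the cover-homotopy class of $\eta$ does not see on its own. I must therefore verify, on the one hand, that a $\sigma$-invariant tilting object is genuinely realized by a $\sigma$-\emph{equivariant} diffeomorphism, so that the local picture at each branch point is preserved, and, on the other hand, that this equivariance makes $\bar\phi$ match the orbifold winding of $\bar\eta$ at every orbifold point. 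Measuring homotopy in $\overline{\surf}$ rather than $\sigma$-invariant homotopy upstairs is precisely the geometric shadow of weakening a genuine $\Z_2$-derived equivalence, as in Theorem~\ref{theorem::dual-derived equivalence}, to a derived equivalence merely provided by a $\Z_2$-invariant tilting object.
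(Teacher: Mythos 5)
Both directions of your argument hinge on producing a $\sigma$-equivariant diffeomorphism between the two double covers (the surfaces $\widetilde{\surf}$ and $\widetilde{\surf}'$ in the paper's notation), and this is a fatal gap: such a diffeomorphism need not exist, because the two covers need not even be homeomorphic. Your justification in the converse direction --- that ``each cover is determined by its orbifold together with the branch locus'' --- is false: a branched double cover is determined by a monodromy homomorphism $\pi_1(\surf\setminus X)\to\Z_2$ sending loops around branch points to $-1$, and different $\cross$-dissections of the same orbifold produce different monodromies. This is not a hypothetical worry: in Section \ref{final-examples} the algebras $\bar{\Lambda}_1$ and $\bar{\Lambda}_4$ live on the same orbifold (a cylinder with two orbifold points), \emph{are} derived equivalent via a $\dual$-tilting object, and yet their covers $\widetilde{\surf}_1$ (a sphere with four boundary components) and $\widetilde{\surf}_4$ (a torus with two boundary components) are not homeomorphic. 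The same example breaks your forward direction at its first step: the Section 2 machinery (Theorem \ref{thm::bijection-tilting} with Proposition \ref{prop::G-hat-iso}) turns a $\dual$-invariant tilting object $\bar{T}$ with a plain isomorphism $\End_{\mathcal{D}^b(\bar{\Lambda})}(\bar{T})\cong\bar{\Lambda}'$ into a $G$-invariant tilting object $T\in\mathcal{D}^b(\Lambda)$ satisfying only $(\End_{\mathcal{D}^b(\Lambda)}(T)G)_{\rm b}\cong\bar{\Lambda}'$; it does \emph{not} give $\End_{\mathcal{D}^b(\Lambda)}(T)\cong\Lambda'$, since passing to skew-group algebras is not injective on isomorphism classes once no equivariance of the isomorphism is imposed. (In the example, $\End(T)$ is the gentle algebra of a dissection of $\widetilde{\surf}_1$ while $\Lambda_4$ lives on $\widetilde{\surf}_4$, so they cannot be isomorphic.) Your reduction ``the left-hand side becomes: $\Lambda$ and $\Lambda'$ are derived equivalent through a $\sigma$-invariant tilting object'' silently strengthens the hypothesis back to (essentially) that of Theorem \ref{theorem::dual-derived equivalence}, which is exactly the equivariance that Theorem \ref{thm::derived-skew-gentle} is designed to drop.

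The paper's proof avoids this trap by never comparing the two covers: everything happens in the single cover $\widetilde{\surf}$ of $\surf$. In the forward direction, the $G$-dissection $D_T$ of $\widetilde{\surf}$ attached to $T$ is \emph{projected down} to a $\cross$-dissection $\bar{D}_T=p(D_T)$ of the orbifold; the chain $\bar{A}(D')\cong\bar{\Lambda}'\cong(\End_{\mathcal{D}^b(\Lambda)}(T)G)_{\rm b}\cong(A(D_T)G)_{\rm b}\cong\bar{A}(\bar{D}_T)$, where the last isomorphism is Proposition \ref{prop::iso AG-Abar}, combined with the correspondence between skew-gentle algebras and $\cross$-dissected orbifolds, yields the diffeomorphism $\bar{\Phi}\colon\surf\to\surf'$ directly at the level of orbifolds, and the line-field condition follows from the equivalence \eqref{w-wbar} comparing winding numbers upstairs and downstairs. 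In the converse, instead of lifting $\bar{\Phi}$, one pulls back the dissection: $D'':=\bar{\Phi}^{-1}(D')$ is a $\cross$-dissection of $\surf$, its preimage $p^{-1}(D'')$ is a $G$-dissection of the \emph{same} cover $\widetilde{\surf}$, Theorem \ref{thm::G-tilting=G-dissection} produces a $G$-invariant tilting object $T$ with a $G$-isomorphism $\End_{\mathcal{D}^b(\Lambda)}(T)\simeq A(p^{-1}(D''))$, and then $\bar{T}:=T\lten{\Lambda}\Lambda G e$ is a $\dual$-tilting object with $\End_{\mathcal{D}^b(\bar{\Lambda})}(\bar{T})\cong(A(p^{-1}(D''))G)_{\rm b}\cong\bar{A}(D'')\cong\bar{A}(D')=\bar{\Lambda}'$. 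The repair for your approach, in a slogan: transport dissections across the cover, not diffeomorphisms.
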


We finish by giving examples showing the subtle differences between these two results.

\section{{\em G}-derived equivalence between {\em G}-algebras and skew-group algebras}

Throughout this section $G$ is a finite abelian group, and $k$ is a field whose characteristic does not divide $|G|$. We denote  the dual (or character) group of  $G$ by $\widehat{G} =\Hom(G,k^{\times})$.

\subsection{Skew-group algebras}

We recall from \cite{ReitenRiedtmann} the notion and some properties of skew-group algebras.
By a $G$-algebra, we mean a finite dimensional $k$-algebra $\Lambda$ with an action of $G$ by automorphisms. Two $G$-algebras $\Lambda$ and $\Lambda'$ are said to be $G$-isomorphic if there exists an isomorphism $\varphi: \Lambda\to \Lambda'$ commuting with the action of $G$.

For $g\in G$, we denote by $\Lambda_g$ the $\Lambda$-bimodule which is $\Lambda$ as a left $\Lambda$-module, and whose action on the right is twisted by $g$, that is, the map $\lambda\mapsto g(\lambda)$ is an isomorphism of right $\Lambda$-modules $\Lambda\to \Lambda_g$. Likewise for the twisted left $\Lambda$-module $_g\Lambda$.

\begin{definition}
Let $\Lambda$ be a $G$-algebra. Then the skew-group algebra $\Lambda G$ is defined as follows:

\begin{itemize}
\item as $k$-vector space we have $\Lambda G=\Lambda \ten{k} kG$ ;
\item the multiplication is given by $(\lambda\smallten g).(\mu\smallten h)=\lambda g(\mu)\smallten gh$ extended by linearity and distributivity.

\end{itemize}
\end{definition}

The map $\lambda\mapsto \lambda\smallten 1_{G}$ is an algebra monomorphism $\Lambda \to \Lambda G$, and so $\Lambda G$ is naturally a $\Lambda-$bimodule, which decomposes as $\Lambda G \cong\bigoplus_{g\in G}\Lambda_g$. Moreover, $\Lambda G$ can be endowed with a $\widehat{G}$-action, which allows to consider the group algebra $\Lambda G \dual := (\Lambda G) \dual$ as follows:

\begin{proposition}\cite[Prop 5.1]{ReitenRiedtmann}\label{prop::isomorphismRR}
Let $\Lambda$ be a $G$-algebra, then $\Lambda G$ is a $\widehat{G}$-algebra with $\widehat{G}$-action given by $$\chi(\lambda\smallten g):=\chi(g)\lambda\smallten g \quad\mbox{ for all }\chi\in \widehat{G},\   \lambda\in \Lambda,\ g\in G.$$
The map $\Lambda G \dual \longrightarrow \End_{\Lambda}(\Lambda G)$ given by 
\begin{equation}\label{iso} \lambda\smallten g\smallten \chi\mapsto (\mu\smallten h\mapsto \chi(h) (\lambda\smallten g).(\mu\smallten h))
\end{equation}
is an isomorphism of algebras.
\end{proposition}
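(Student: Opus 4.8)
The plan is to verify the four assertions packaged into the statement: that $\dual$ acts on $\Lambda G$ by algebra automorphisms, that the displayed map --- call it $\Phi$ --- is well defined with values in $\End_\Lambda(\Lambda G)$, that it is multiplicative, and finally that it is bijective. The first point is immediate from the fact that each $\chi\in\dual$ is a group homomorphism: writing $a=\lambda\smallten g$ and $b=\mu\smallten h$, one checks that $\chi(ab)=\chi(gh)\,\lambda g(\mu)\smallten gh$ equals $\chi(a)\chi(b)$ precisely because $\chi(gh)=\chi(g)\chi(h)$; since $\chi$ is invertible with inverse $\chi^{-1}$, the formula defines a $\dual$-action by automorphisms.

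For well-definedness I would first record that, regarded as a right $\Lambda$-module via $\Lambda\hookrightarrow\Lambda G$, $\nu\mapsto\nu\smallten 1$, the algebra $\Lambda G$ is free of rank $|G|$ with basis $\{1\smallten g\}_{g\in G}$, and that $\End_\Lambda(\Lambda G)$ denotes endomorphisms of this right module. The key observation is that $\Phi(a\smallten\chi)$ sends $\mu\smallten h\mapsto\chi(h)\,a(\mu\smallten h)$, i.e. it is left multiplication by $a$ weighted by the character value on the $G$-degree of its argument. Since the right $\Lambda$-action preserves the $G$-degree, and left multiplication commutes with right multiplication by associativity in $\Lambda G$, each $\Phi(a\smallten\chi)$ is indeed right $\Lambda$-linear. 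As the formula is $k$-linear in $\lambda$ and defined on the basis elements $g,\chi$, it extends to a well-defined $k$-linear map $\Phi$ on all of $\Lambda G\dual=\Lambda G\ten{k}k\dual$.

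Multiplicativity is then a direct computation using the product $(a\smallten\chi)(b\smallten\psi)=a\,\chi(b)\smallten\chi\psi$ in $\Lambda G\dual$. Evaluating both sides on $c=\nu\smallten f$, the expression $\Phi\bigl((a\smallten\chi)(b\smallten\psi)\bigr)(c)$ yields the scalar $\chi(f)\psi(f)$ times $a\chi(b)c$, whereas the composite $\Phi(a\smallten\chi)\circ\Phi(b\smallten\psi)$ first scales by $\psi(f)$ and forms $bc$ (of $G$-degree $hf$), then scales by $\chi(hf)$ and left-multiplies by $a$; using $\chi(b)=\chi(h)b$ and $\chi(hf)=\chi(h)\chi(f)$ the two sides coincide, so $\Phi$ is an algebra homomorphism.

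Finally, for bijectivity both algebras have $k$-dimension $|G|^2\dim_k\Lambda$ --- the target because $\Lambda G$ is free of rank $|G|$ as a right module, whence $\End_\Lambda(\Lambda G)\cong M_{|G|}(\Lambda)$ --- so it suffices to prove injectivity. Suppose $\sum_{g,\chi}(\lambda_{g,\chi}\smallten g)\smallten\chi\in\ker\Phi$. Evaluating on $1\smallten h$ and reading off the component in the summand $\Lambda_{gh}$ (which are distinct as $g$ varies, for fixed $h$) gives $\sum_\chi\chi(h)\lambda_{g,\chi}=0$ for every $g$ and every $h\in G$. Here the main --- and only genuinely nontrivial --- obstacle appears: because the distinct characters of the finite abelian group $G$ are linearly independent as functions $G\to k$, equivalently the character table is invertible, which is exactly where the hypothesis $\mathrm{char}\,k\nmid|G|$ is used, we conclude $\lambda_{g,\chi}=0$ for all $g,\chi$. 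This proves injectivity, and with the dimension count the isomorphism follows.
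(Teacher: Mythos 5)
The paper offers no proof of this proposition at all: it is quoted directly from Reiten--Riedtmann \cite[Prop.\ 5.1]{ReitenRiedtmann}, so there is no argument in the text to compare yours against. Judged on its own, your proof follows the standard route and most of it is correct: the check that each $\chi$ acts by algebra automorphisms, the right-$\Lambda$-linearity of $\Phi(a\smallten\chi)$ (degree preservation of the right action plus associativity), and the multiplicativity computation are all sound, and proving injectivity via linear independence of distinct characters (applied coordinatewise after choosing a $k$-basis of $\Lambda$) is a clean way to handle that half.

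There is, however, one genuine gap, and it sits precisely at the step you flagged as the crux. Your dimension count takes $\dim_k \Lambda G\dual = |G|^2\dim_k\Lambda$, which silently assumes $|\dual|=|G|$. Under the standing hypotheses of this section --- $G$ finite abelian and $\mathrm{char}\,k\nmid|G|$ --- this can fail: for $G=\Z/3\Z$ and $k=\mathbb{Q}$ the group $\dual=\Hom(G,k^\times)$ is trivial, so $\Lambda G\dual=\Lambda G$ has dimension $3\dim_k\Lambda$ while $\End_\Lambda(\Lambda G)\cong M_3(\Lambda)$ has dimension $9\dim_k\Lambda$; the map is injective but not surjective, and the proposition is simply false in that generality. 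What is actually needed (and what makes the cited result correct) is that $k$ contain a primitive $e$-th root of unity for $e$ the exponent of $G$; this is equivalent to $|\dual|=|G|$ and automatically implies $\mathrm{char}\,k\nmid|G|$. Relatedly, your parenthetical claim that linear independence of characters ``is exactly where the hypothesis $\mathrm{char}\,k\nmid|G|$ is used'' is inaccurate: Dedekind's lemma holds over any field, so your injectivity argument uses no hypothesis on $k$ whatsoever; the characteristic and roots-of-unity assumptions enter only through the count $|\dual|=|G|$, i.e.\ through surjectivity. None of this affects the paper's use of the result, where $G=\Z_2$ and $\mathrm{char}\,k\neq2$ force $\dual=\{\pm1\}\cong\Z_2$, and with that case (or the roots-of-unity hypothesis) granted, your proof goes through verbatim.
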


\begin{remark}
Since  $\Lambda G$ is isomorphic to the sum of $|G|$ copies of $\Lambda$ as a right $\Lambda$-module, the proposition above implies that $\Lambda$ is Morita equivalent to $\Lambda G\dual$.  
\end{remark}

\subsection{{\em G}-invariant objects}

An action of $G$ on $\Lambda$ induces an action on the category $\mathcal{D}^b(\mod \Lambda)$ on the right in the sense of \cite[3.1]{Elagin} as follows: For all $g \in G$ we set 

$$X^g:=X\lten{\Lambda} \Lambda_g $$
for all objects $X \in \mathcal{D}^b(\mod \Lambda)$, and for $f:X\to Y$, $$f^g:=f\lten{} 1_{\Lambda_g}.$$

\begin{definition}
An object $X$ in $\mathcal{D}^b(\mod \Lambda)$ is called $G$-invariant (or $G$-equivariant) if there exist isomorphisms $\iota_g:X^{g^{-1}}\to X$ for all $g\in G$ such that 
$$  \iota_{gh}=\iota_g\circ (\iota_{h})^{g^{-1}}$$
holds for all $g,h\in G$.
\end{definition}

With this definition, it is immediate to check the following (compare \cite{KrSo,Elagin}):

\begin{lemma}\label{G-End} If $X\in \mathcal{D}^b(\mod \Lambda)$ is $G$-invariant, then $G$ acts on $\End_{\mathcal{D}^b(\Lambda)}(X)$ by 
$$g.f:=\iota_g \circ f^{g^{-1}}\circ (\iota_g)^{-1}.$$
\end{lemma}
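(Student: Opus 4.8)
The plan is to check directly that $g\mapsto(g.{-})$ defines an action of $G$ by $k$-algebra automorphisms on $A:=\End_{\mathcal{D}^b(\Lambda)}(X)$, i.e.\ that each $g.{-}$ is a ring endomorphism of $A$, that $e.{-}=\mathrm{id}_A$, and that $(gh).{-}=g.(h.{-})$. Before the verification I would record the formal properties of the right action that make everything run: the functor $({-})^{g}=({-})\lten{\Lambda}\Lambda_g$ is $k$-linear and respects composition, so $(f_1\circ f_2)^{g}=f_1^{g}\circ f_2^{g}$ and $(1_X)^{g}=1_{X^{g}}$; the canonical isomorphisms $\Lambda_g\lten{\Lambda}\Lambda_h\cong\Lambda_{gh}$ (here $G$ abelian) give natural identifications $(X^{h})^{g}\cong X^{gh}$ under which $(f^{h})^{g}=f^{gh}$; and $\Lambda_e=\Lambda$ gives $X^{e}=X$, $f^{e}=f$. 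Finally $({-})^{g}$ is an autoequivalence with quasi-inverse $({-})^{g^{-1}}$, so it sends inverses to inverses. Tracking sources and targets, $f^{g^{-1}}\colon X^{g^{-1}}\to X^{g^{-1}}$ and $\iota_g\colon X^{g^{-1}}\to X$, so indeed $g.f\in A$.

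The first two axioms are then short. Since $g.{-}$ is conjugation by the fixed isomorphism $\iota_g$ precomposed with the $k$-linear functor $({-})^{g^{-1}}$, multiplicativity and linearity of $({-})^{g^{-1}}$ yield $g.(f_1\circ f_2)=(g.f_1)\circ(g.f_2)$, $g.1_X=1_X$, and $k$-linearity of $g.{-}$; bijectivity holds because $f\mapsto f^{g^{-1}}$ is a bijection on $\Hom$-spaces and conjugation by an isomorphism is bijective. For the identity element I would first feed $g=h=e$ into the cocycle relation to get $\iota_e=\iota_e\circ\iota_e$, whence $\iota_e=1_X$ since $\iota_e$ is invertible; together with $f^{e}=f$ this gives $e.f=f$.

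The heart of the argument is multiplicativity, which is precisely where the cocycle condition is used. Expanding $g.(h.f)=\iota_g\circ(\iota_h\circ f^{h^{-1}}\circ\iota_h^{-1})^{g^{-1}}\circ\iota_g^{-1}$ and distributing $({-})^{g^{-1}}$ over the composite, the middle factor becomes $(f^{h^{-1}})^{g^{-1}}=f^{h^{-1}g^{-1}}=f^{(gh)^{-1}}$ (commutativity of $G$ is essential here), while the outer factors assemble into $\iota_g\circ(\iota_h)^{g^{-1}}$ on the left and its inverse on the right. The cocycle relation $\iota_{gh}=\iota_g\circ(\iota_h)^{g^{-1}}$ then collapses these to $\iota_{gh}$ and $\iota_{gh}^{-1}$, so that
\[
g.(h.f)=\iota_{gh}\circ f^{(gh)^{-1}}\circ\iota_{gh}^{-1}=(gh).f.
\]
The main obstacle is not any individual computation but the coherence bookkeeping of the natural isomorphisms $(X^{h})^{g}\cong X^{gh}$: one must ensure that the identifications used to rewrite $(f^{h^{-1}})^{g^{-1}}$ as $f^{(gh)^{-1}}$ and to apply the cocycle condition are the same, so that no coherence isomorphism is silently discarded. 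Once these identifications are fixed once and for all (equivalently, working with a strict right action as in \cite[3.1]{Elagin}), each displayed step is an honest equality and the verification is, as asserted, immediate.
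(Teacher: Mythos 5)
Your proof is correct and takes essentially the same route as the paper: the heart of your argument --- expanding $g.(h.f)$, distributing the functor $(-)^{g^{-1}}$ over the composite, and collapsing $\iota_g\circ(\iota_h)^{g^{-1}}$ to $\iota_{gh}$ via the cocycle relation --- is exactly the paper's computation, with your explicit unit argument ($\iota_e=1_X$) and automorphism checks filling in what the paper delegates to \cite[Remark 3.6]{Elagin}. One minor correction: commutativity of $G$ is not actually needed for $(f^{h^{-1}})^{g^{-1}}=f^{h^{-1}g^{-1}}=f^{(gh)^{-1}}$, since the right-action convention gives $((-)^{a})^{b}=(-)^{ab}$ and $(gh)^{-1}=h^{-1}g^{-1}$ holds in every group.
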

\begin{proof}
The definitions imply
 \begin{align*}
g.(h.f) &= \iota_g \circ [\iota_h \circ f^{h^{-1}}\circ (\iota_h)^{-1}]^{g^{-1}}\circ (\iota_g)^{-1}\\
       &=\iota_g \circ (\iota_h)^{g^{-1}} \circ f^{h^{-1}g^{-1}}\circ ((\iota_h)^{-1})^{g^{-1}}\circ (\iota_g)^{-1}\\
       &=gh.f
\end{align*}
Note that we had to define the action on $f$ using the shift $f^{g^{-1}}$ by the inverse of $g$ in order to obtain a left action of the group $G$.
The action of the neutral element $e \in G$ can be identified with the identity, see \cite[Remark 3.6]{Elagin}  for details.
\end{proof}
\begin{example}\label{example::G-invariant}
\
\begin{enumerate}

\item

The object $\Lambda$ in $\mathcal{D}^b(\mod \Lambda)$ is $G$-invariant, with isomorphisms $\iota_g:\Lambda_{g^{-1}}\to \Lambda$  given by $\lambda\mapsto g(\lambda)$. By Lemma \ref{G-End}, the group $G$ acts on $\End_{\mathcal{D}^b(\Lambda)}(\Lambda)$
and it is easy to see that the isomorphism $\End_{\mathcal{D}^b(\Lambda)}(\Lambda)\simeq \Lambda$ is a $G$-isomorphism.
\medskip
 
 \item For any $X\in \mod \Lambda$ and $\chi \in \dual $ the map $x\smallten g\mapsto \chi(g) x\smallten g$ induces an isomorphism in $\mod \Lambda G$
 $$\iota_\chi: (X\ten{\Lambda} \Lambda G )^{\chi^{-1}}  \overset{\sim}{\longrightarrow} X\ten{\Lambda} \Lambda G,$$ 
 which turns $X\ten{\Lambda} \Lambda G$ into a $ \dual$-invariant $\Lambda G$-module.
 Similarly, any object in $\mathcal{D}^b(\mod \Lambda G)$ of the form $X\lten{\Lambda} \Lambda G$ is $\dual$-invariant.
 \medskip
 
 \item Conversely, any object $X$ in $\mod \Lambda G$  is $G$-invariant when viewed as a $\Lambda$-module. Indeed, let us define
\[
 \iota_g: X_\Lambda^{g^{-1}}  \overset{\sim}{\longrightarrow} X_\Lambda,
 \quad
x \longmapsto x . ( 1 \otimes g^{-1})
\]
 Then $ \iota_g$ is a morphism of $\Lambda$-modules
 \begin{align*}
\iota_g(x.\lambda) = \iota_g(xg^{-1}(\lambda))
&= xg^{-1}(\lambda). ( 1 \otimes g^{-1})\\
       &= x.(1\otimes g^{-1}).(\lambda \otimes 1)\\
       &=\iota_g(x).\lambda
\end{align*}
 and one verifies that
 $$  \iota_{gh}=\iota_g\circ (\iota_{h})^{g^{-1}}$$
 holds for all $g,h \in G.$
 \end{enumerate}
\end{example}
\medskip

\begin{remark}
Since $(\Lambda_g)^{g'}=\Lambda_{gg'}$, the object $\Lambda G$ viewed as a $\Lambda$-module admits a realization as $G$-invariant object which is different from the one given in example \ref{example::G-invariant}(3), namely with the isomorphisms $\iota_g$ given by the permutation of the summands of $\Lambda G$. This induces by Lemma \ref{G-End} an action of $G$ on $\End_{\Lambda}(\Lambda G)$. 
Like the dual group $\dual$ acts on $\Lambda G$, the double dual group $G=\doublehat{G}$ acts on $\Lambda G\dual$, and the isomorphism $\Lambda G \dual \cong \End_{\Lambda}(\Lambda G)$ 
described in Proposition \ref{prop::isomorphismRR} is a $G$-isomorphism. However, if we denote by $e:\Lambda G\to \Lambda G$ the projection to the component $\Lambda \cong \Lambda\otimes 1_G$ of $\Lambda G$, this idempotent of $\End_{\Lambda}(\Lambda G)$ is not stable under the action of $G$. It is not clear in general how to construct an idempotent $e$ of $\Lambda G \dual$ together with a $G$-isomorphism $e\Lambda G\dual e\to \Lambda$.
\end{remark}

We recall from \cite[Lemma 2.3.1]{LeMeur} that the triangle functors 
$$\xymatrix{\mathcal{D}^b(\Lambda) \ar@<.5ex>[rr]^{-\lten{\Lambda}\Lambda G}& & \mathcal{D}^b(\Lambda G)\ar@<.5ex>[ll]^{Res}}$$
form adjoint pairs in both directions, and the unit of adjunction splits.
In particular, we have for all $X \in \mathcal{D}^b(\Lambda)$  a functorial isomorphism 
\begin{equation}\label{eq::split adjunction}
Res(X \lten{\Lambda}\Lambda G) \cong \oplus_{g \in G}X^g. 
\end{equation}
It is shown in Proposition 5.2.3 of \cite{LeMeur} that the skew-group ring $\End_{\mathcal{D}^b(\Lambda)}(X)G$ of a $G$-invariant object $X$ is Morita equivalent to the endomorphism ring of $X\lten{\Lambda} \Lambda G$. We show that they are actually isomorphic:

\begin{proposition}\label{prop::G-hat-iso}
Let $\Lambda$ be a $G$-algebra, and $X\in \mathcal{D}^b(\Lambda)$ be a $G$-invariant object. Then we have a $\dual$-isomorphism $$\End_{\mathcal{D}^b(\Lambda)}(X)G\cong \End_{\mathcal{D}^b(\Lambda G)}(X\lten{\Lambda} \Lambda G).$$
\end{proposition}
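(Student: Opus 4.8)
The plan is to realize the isomorphism through the biadjunction between $F:=-\lten{\Lambda}\Lambda G$ and $\operatorname{Res}$, and then to match the two algebra structures. Write $E:=\End_{\mathcal{D}^b(\Lambda)}(X)$. First I would combine the adjunction $F\dashv\operatorname{Res}$ with the splitting \eqref{eq::split adjunction} to obtain a $k$-linear isomorphism
$$\End_{\mathcal{D}^b(\Lambda G)}(FX)\;\cong\;\Hom_{\mathcal{D}^b(\Lambda)}\bigl(X,\operatorname{Res}(FX)\bigr)\;\cong\;\bigoplus_{g\in G}\Hom_{\mathcal{D}^b(\Lambda)}(X,X^{g}).$$
Because $X$ is $G$-invariant, each $\iota_{g^{-1}}\colon X^{g}\to X$ is an isomorphism, so post-composition with $\iota_{g^{-1}}$ identifies the summand $\Hom_{\mathcal{D}^b(\Lambda)}(X,X^{g})$ with $E$. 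Altogether this yields a $k$-linear isomorphism $\Phi\colon\End_{\mathcal{D}^b(\Lambda G)}(FX)\xrightarrow{\ \sim\ }\bigoplus_{g\in G}E\otimes g=EG$, compatible with the dimension count $\dim_k\End_{\mathcal{D}^b(\Lambda G)}(FX)=|G|\cdot\dim_k E$. It remains to check that $\Phi$ is multiplicative and $\dual$-equivariant.

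The heart of the argument, and the step I expect to be the main obstacle, is multiplicativity. Unwinding the adjunction, the composite $\phi\circ\psi$ of two endomorphisms of $FX$ corresponds on $\bigoplus_{g}\Hom_{\mathcal{D}^b(\Lambda)}(X,X^{g})$ to a convolution product governed by the counit $\varepsilon\colon F\operatorname{Res}\to\mathrm{id}$, equivalently by the multiplication of the monad $\operatorname{Res}F$: one factor is shifted by the index of the other through the functors $(-)^{g}$, and the components are then recombined according to the group law of $G$. Transporting this product along the identifications $\iota_{g^{-1}}\colon X^{g}\to X$, the cocycle relation $\iota_{gh}=\iota_g\circ(\iota_h)^{g^{-1}}$ defining $G$-invariance is precisely what turns the shift $(-)^{g}$ into the twisted action $g.(-)=\iota_g\circ(-)^{g^{-1}}\circ\iota_g^{-1}$ of Lemma \ref{G-End}; thus the convolution becomes the skew-group multiplication of $EG$. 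The delicate bookkeeping here is to keep straight the directions of the shift functors, of the isomorphisms $\iota_g$, and of the unit and counit, so that the twist falls on the correct tensor factor and $\Phi$ lands on $EG$ itself rather than on its opposite algebra; choosing consistently the orientation of the adjunction and of the $\iota_g$ is what makes this work.

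Finally I would verify $\dual$-equivariance. The right-hand side $EG$ carries the $\dual$-action of Proposition \ref{prop::isomorphismRR}, namely $\chi\cdot(f\otimes g)=\chi(g)\,f\otimes g$, while the left-hand side carries the action induced via Lemma \ref{G-End} (applied to the group $\dual$) from the $\dual$-invariant structure $\iota_\chi$ on $FX=X\lten{\Lambda}\Lambda G$ exhibited in Example \ref{example::G-invariant}(2). Since $\iota_\chi$ acts on the summand of $\operatorname{Res}(FX)$ indexed by $g$ with weight $\chi(g)$, the summand $E\otimes g$ is exactly the subspace on which $\dual$ acts with character $\chi\mapsto\chi(g)$ on both sides; hence $\Phi$ intertwines the two $\dual$-actions and is the desired $\dual$-isomorphism.
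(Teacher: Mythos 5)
Your strategy is essentially the paper's own proof read in the opposite direction: the paper also identifies $\End_{\mathcal{D}^b(\Lambda G)}(X\lten{\Lambda}\Lambda G)$ with $\bigoplus_{g\in G}\Hom_{\mathcal{D}^b(\Lambda)}(X,X^{g})\cong\End_{\mathcal{D}^b(\Lambda)}(X)G$ via the adjunction and \eqref{eq::split adjunction}, but it first writes down the candidate map explicitly in terms of the invariance isomorphisms $\iota_g$ and the functorial maps $L^X_g$, proves multiplicativity by a direct computation with the rules \eqref{lambdarules}, and only then invokes the adjunction to conclude bijectivity. What you call ``delicate bookkeeping'' is precisely that computation, and it is not merely delicate: with the identification you fixed, it comes out wrong.

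Concretely, write $E=\End_{\mathcal{D}^b(\Lambda)}(X)$ and let $\hat\phi=(\hat\phi_g)_g$, with $\hat\phi_g\in\Hom_{\mathcal{D}^b(\Lambda)}(X,X^g)$, denote the adjunct of $\phi\in\End_{\mathcal{D}^b(\Lambda G)}(X\lten{\Lambda}\Lambda G)$. Unwinding the monad multiplication as you describe gives the convolution
\[
\widehat{\phi\circ\psi}_f=\sum_{hg=f}\,(\hat\phi_h)^{g}\circ\hat\psi_g
\]
(using the canonical isomorphism $(X^h)^g\cong X^{hg}$): the $\phi$-factor is shifted by the index of the $\psi$-factor. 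In $EG$ the twist goes the other way: $(u\otimes h)(v\otimes g)=u\,(h.v)\otimes hg$, i.e.\ the second factor is twisted by the index of the first. With your identification $u_g:=\iota_{g^{-1}}\circ\hat\phi_g$, $v_g:=\iota_{g^{-1}}\circ\hat\psi_g$, the cocycle relation in the form $\iota_{(hg)^{-1}}\circ(\iota_{h^{-1}}^{-1})^{g}=\iota_{g^{-1}}$ yields
\[
\iota_{f^{-1}}\circ\widehat{\phi\circ\psi}_f=\sum_{hg=f}\bigl(g^{-1}.u_h\bigr)\circ v_g ,
\]
so your $\Phi$ is multiplicative for the twisted product $(u\otimes h)*(v\otimes g)=\bigl((g^{-1}.u)\,v\bigr)\otimes hg$, not for the skew-group product; the two genuinely differ whenever $G$ acts nontrivially on $E$ (take $h=e$: $(g^{-1}.u)v\neq uv$ in general). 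The gap is repairable: $u\otimes g\mapsto(g.u)\otimes g$ is an algebra isomorphism $(EG,*)\to EG$, so it suffices to identify $\Hom(X,X^g)$ with $E$ by $\hat\phi_g\mapsto(\hat\phi_g)^{g^{-1}}\circ\iota_g^{-1}$ instead of post-composition with $\iota_{g^{-1}}$. But as written, the pivotal sentence ``thus the convolution becomes the skew-group multiplication of $EG$'' is false, and since you yourself single out this step as the heart of the argument, describing the bookkeeping without performing it leaves the proof incomplete. (Your $\dual$-equivariance paragraph is fine in spirit: either identification preserves the $G$-grading, and $\chi$ acts on the $g$-summand by the scalar $\chi(g)$ on both sides.)
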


\begin{proof} 

Left multiplication with $1 \otimes g$ yields an isomorphism of $\Lambda-\Lambda G$ bimodules $ \Lambda G \to  {_g\Lambda G}$. This induces an isomorphism which is functorial in $X\in \mathcal{D}^b(\Lambda)$ 
\[L^X_g : X \lten{\Lambda} \Lambda G \to X^{g^{-1}} \lten{\Lambda} \Lambda G \]
 since we have $\Lambda\otimes _{g}\Lambda G=\Lambda_{g^{-1}}\otimes \Lambda G$.
Then one easily checks that 
\begin{align}\label{lambdarules}
L_{gh}^X=L_g^{X^{h^{-1}}}\circ L^X_h \mbox{ and } L_g^Y\circ (u\otimes 1)=(u^{g^{-1}}\otimes 1)\circ L^X_g
 \end{align} for any $g,h\in G$ and any $u\in \Hom_{\mathcal{D}^b(\Lambda)}(X, Y)$.
 
Now let $X$ be a $G$-invariant object in $\mathcal{D}^b(\Lambda)$ and $i_g$ the corresponding isomorphism. Define a map
$$ \phi: \End_{\mathcal{D}^b(\Lambda)}(X)G\to \End_{\mathcal{D}^b(\Lambda G)}(X\lten{\Lambda} \Lambda G)$$
by
\[ \phi(u \otimes g) = 
((u \circ i_g) \otimes 1_{\Lambda G}) \circ L^X_{g}.
\]
We first verify that $\phi$ is a morphism of algebras: Using the properties for  $i_g$ and (\ref{lambdarules}), one sees that  the product
$$ (u \otimes g)\cdot (v \otimes h) = u\circ i_g \circ v^{g^{-1}} \circ i_g^{-1} \otimes gh $$
is mapped to 
\begin{align*} (u\circ i_g\circ v^{g^{-1}}\circ i_g^{-1}\circ i_{gh}\otimes 1_{\Lambda G})\circ L^X_{gh} & = (u\circ i_g \otimes 1)\circ ((v\circ i_h)^{g^{-1}}\otimes 1)\circ L^{X^{h^{-1}}_g}\circ L^X_h\\
 & = (u\circ i_g \otimes 1)\circ L^X_g\circ (v\circ i_h\otimes 1)\circ L^X_h \\ &= \phi(u \otimes g) \circ \phi(v \otimes h)
\end{align*}

Next, the adjunction formula and equation (\ref{eq::split adjunction}) yields isomorphisms of vector spaces
 \begin{align*}
 \End_{\mathcal{D}^b(\Lambda G)}(X \otimes \Lambda G) &\cong
  \Hom_{\mathcal{D}^b(\Lambda)}(X, \Hom_{\mathcal{D}^b(\Lambda G)}(\Lambda G, X \otimes \Lambda G))
  \\ &\cong \Hom_{\mathcal{D}^b(\Lambda)}(X,  X \otimes \Lambda G)\\
  &\cong \Hom_{\mathcal{D}^b(\Lambda)}(X,  \bigoplus_{g \in G} X^g )\\
  &\cong  \End_{\mathcal{D}^b(\Lambda)}(X) G
 \end{align*}
under which the element 
$(u  \circ i_g \otimes 1) \circ L^X_{g}$ is sent to the element $u \otimes g$. Therefore $\phi$ is an isomorphism, which can be verified to be compatible with the action of $\dual$.
\end{proof}

\subsection{Invariant tilting objects in $\mathcal{D}^b(\Lambda)$ and in $\mathcal{D}^b(\Lambda G)$}
We study now tilting objects in a derived category with a group action. Let us recall that  an object $T$ of $D^b(\Lambda)$ is \emph{tilting} if
   thick$(T)= D^b(\Lambda)$  and $T$ is {\em rigid}, that is, $\Hom_{D^b(\Lambda)}(T, T[i]) =0$ for any integer~$i \neq0$.

\begin{definition} For $\Lambda$ a $G$-algebra, and $T$ a $G$-invariant tilting object of $\mathcal{D}^b(\Lambda)$, the category ${\rm add}(T)$ will be called a \emph{$G$-tilting subcategory of }$\mathcal{D}^b(\Lambda)$.
\end{definition}
 
 The following result has been partially shown in \cite[Corollary 5.2.2]{LeMeur} in the context of cluster-tilting subcategories.
Note however that we consider $\dual$-invariance instead of invariance under a composition of functors.

\begin{theorem}\label{thm::bijection-tilting}
Let $\Lambda$ be a $G$-algebra. Then the functors 
$$\xymatrix{\mathcal{D}^b(\Lambda) \ar@<.5ex>[rr]^{-\lten{\Lambda}\Lambda G}& & \mathcal{D}^b(\Lambda G)\ar@<.5ex>[ll]^{Res}}$$
 induce a bijection $$\{G\textrm{-tilting subcategories of }\mathcal{D}^b(\Lambda)\} \leftrightarrow \{ \dual\textrm{-tilting subcategories of }\mathcal{D}^b(\Lambda G)\}.$$
\end{theorem}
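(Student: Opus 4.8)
The plan is to establish the bijection by using the two functors as mutually inverse up to the invariance structure. The key input is the splitting of the adjunction: by \eqref{eq::split adjunction}, we have $\operatorname{Res}(X\lten{\Lambda}\Lambda G)\cong\bigoplus_{g\in G}X^g$ for any $X$, and the unit of adjunction splits. I would first check that the two functors preserve the relevant tilting and invariance properties, and then that they are mutually inverse on the level of subcategories.

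Let me sketch the details. First I would show that $-\lten{\Lambda}\Lambda G$ sends a $G$-tilting subcategory to a $\dual$-tilting subcategory. If $T$ is a $G$-invariant tilting object of $\mathcal{D}^b(\Lambda)$, then $T\lten{\Lambda}\Lambda G$ is $\dual$-invariant by Example \ref{example::G-invariant}(2). For rigidity, the adjunction gives $\Hom_{\mathcal{D}^b(\Lambda G)}(T\lten{\Lambda}\Lambda G, (T\lten{\Lambda}\Lambda G)[i])\cong\Hom_{\mathcal{D}^b(\Lambda)}(T,\operatorname{Res}((T\lten{\Lambda}\Lambda G)[i]))\cong\bigoplus_{g\in G}\Hom_{\mathcal{D}^b(\Lambda)}(T,T^g[i])$, and since $T$ is $G$-invariant each $T^g\cong T$, so this vanishes for $i\neq0$ precisely because $T$ is rigid. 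For the thick-closure condition, since $-\lten{\Lambda}\Lambda G$ is a triangle functor sending $\Lambda$ into $\thick(T\lten{\Lambda}\Lambda G)$ and $\Lambda G$ generates $\mathcal{D}^b(\Lambda G)$, one gets $\thick(T\lten{\Lambda}\Lambda G)=\mathcal{D}^b(\Lambda G)$. Conversely, I would show $\operatorname{Res}$ sends a $\dual$-tilting subcategory to a $G$-tilting subcategory, using the symmetric adjunction in the other direction together with the Morita equivalence $\Lambda\simeq\Lambda G\dual$ from the Remark after Proposition \ref{prop::isomorphismRR}; here the double-dual identification $G=\doublehat{G}$ lets one treat $\operatorname{Res}$ as an instance of the same $\smallten\, (\Lambda G)\dual$-type functor applied to $\Lambda G$, so the same argument recurs.

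The heart of the matter is that the assignments are mutually inverse \emph{on subcategories}. For a $G$-invariant tilting object $T$, equation \eqref{eq::split adjunction} gives $\operatorname{Res}(T\lten{\Lambda}\Lambda G)\cong\bigoplus_{g\in G}T^g\cong T^{|G|}$ using $G$-invariance, so $\add(\operatorname{Res}(T\lten{\Lambda}\Lambda G))=\add(T)$ as desired. For the reverse composite, I would invoke the Morita equivalence $\Lambda\simeq\Lambda G\dual$ and Proposition \ref{prop::G-hat-iso}: starting from a $\dual$-invariant tilting object $S$ of $\mathcal{D}^b(\Lambda G)$, restriction yields a $G$-invariant object whose image under $-\lten{\Lambda}\Lambda G$ recovers $\add(S)$, because the endomorphism algebra computation of Proposition \ref{prop::G-hat-iso} identifies $\End(\operatorname{Res}(S)\lten{\Lambda}\Lambda G)$ with $\End_{\mathcal{D}^b(\Lambda G)\dual}(S)\,\dual\cong\End(S)$ compatibly. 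The main obstacle I anticipate is precisely this reverse direction: establishing that a $\dual$-invariant $S$ is, up to additive closure, of the form $T\lten{\Lambda}\Lambda G$ for $G$-invariant $T$. One must use the splitting of the unit to produce the candidate $T$ as a summand of $\operatorname{Res}(S)$ and then check that the $\dual$-invariance structure on $S$ is exactly what forces $\operatorname{Res}(S)\cong\bigoplus_{g}T^g$ with $T$ $G$-invariant — this is where the full strength of Example \ref{example::G-invariant}(3) and the bimodule identities \eqref{lambdarules} must be combined carefully, rather than just counting dimensions.
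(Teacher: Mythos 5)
Your forward direction largely matches the paper's: $\dual$-invariance from Example \ref{example::G-invariant}(2) and rigidity via the split adjunction \eqref{eq::split adjunction} are exactly the paper's steps. But already here your generation argument has a gap: from $\Lambda G \in \thick(T\lten{\Lambda}\Lambda G)$ you conclude by asserting that ``$\Lambda G$ generates $\mathcal{D}^b(\Lambda G)$'', yet $\thick(\Lambda G)$ is the subcategory of perfect complexes, which equals $\mathcal{D}^b(\Lambda G)$ only when $\Lambda G$ has finite global dimension --- not an assumption here, and gentle and skew-gentle algebras frequently have infinite global dimension. The paper argues differently: for $X \in \mathcal{D}^b(\Lambda G)$, restriction gives $X_\Lambda \in \thick(T)$, hence $X_\Lambda\lten{\Lambda}\Lambda G \in \thick(T\lten{\Lambda}\Lambda G)$, and then $X$ is a \emph{direct summand} of $X_\Lambda\lten{\Lambda}\Lambda G$ --- which brings us to the real missing ingredient.

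The key tool in the paper, which your proposal never produces, is Lemma \ref{lemma:G-bimodule}: an isomorphism of $\Lambda G$-bimodules $\Lambda G\dual \simeq \Lambda G\ten{\Lambda}\Lambda G$, equivalently, the composite ``restrict then induce'' is $\bigoplus_{\chi\in\dual}(-)^\chi$, the exact mirror of \eqref{eq::split adjunction}. This single fact supplies everything you struggle with: it finishes the generation step above, it gives rigidity and generation for $U_\Lambda$ when $U$ is $\dual$-tilting, and it gives the add-equality $\add(U_\Lambda\lten{\Lambda}\Lambda G) = \add(\bigoplus_{\chi\in\dual} U^\chi) = \add(U)$. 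Your proposed substitutes do not work. First, routing through the Morita equivalence between $\Lambda$ and $\Lambda G\dual$ together with $G=\doublehat{G}$ founders on precisely the point raised in the Remark following Example \ref{example::G-invariant}: it is not clear in general how to find a $G$-invariant idempotent $e$ of $\Lambda G\dual$ with a $G$-isomorphism $e\Lambda G\dual e \simeq \Lambda$, so this Morita equivalence is not known to be $G$-equivariant and cannot be used to transport $G$-tilting subcategories. Second, the endomorphism-ring isomorphism of Proposition \ref{prop::G-hat-iso} cannot establish $\add(\operatorname{Res}(S)\lten{\Lambda}\Lambda G)=\add(S)$: non-isomorphic objects with different additive closures can have isomorphic endomorphism rings, so an algebra isomorphism proves nothing about $\add$. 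Third, your structural claim that $\dual$-invariance of $S$ forces $\operatorname{Res}(S)\cong\bigoplus_{g\in G} T^g$ with $T$ itself $G$-invariant is both false and unnecessary: if $T$ were $G$-invariant this would say $\operatorname{Res}(S)$ is a direct sum of $|G|$ copies of a single object, whereas by Clifford theory (cf. \cite{ReitenRiedtmann}) a $\dual$-invariant indecomposable $S$ restricts to $\bigoplus_{g\in G}V^g$ with the summands pairwise non-isomorphic; and no such decomposition is needed, because the paper simply takes the whole object $\operatorname{Res}(S)$, which is $G$-invariant by Example \ref{example::G-invariant}(3), as the tilting object and verifies its properties using Lemma \ref{lemma:G-bimodule}.
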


For the proof we need the following lemma:
\begin{lemma}\label{lemma:G-bimodule}
There is an isomorphism of $\Lambda G$-bimodules $$\Lambda G \dual \simeq \Lambda G\ten{\Lambda}\Lambda G.$$
\end{lemma}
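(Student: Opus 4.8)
The plan is to avoid computing inside $\Lambda G\dual$ directly, and instead to reinterpret the left-hand side through the algebra isomorphism of Proposition~\ref{prop::isomorphismRR}. Under that isomorphism the subalgebra $\Lambda G=\Lambda G\ten{k}1_{\dual}$ is sent to the operators of left multiplication $\ell_a\colon x\mapsto ax$ on $\Lambda G$ (the character factor acts by $\chi(h)=1$ when $\chi=1_{\dual}$). On the left-hand side the $\Lambda G$-bimodule structure is multiplication by $\Lambda G\ten{k}1_{\dual}$, while on $\End_{\Lambda}(\Lambda G)$ it is pre- and post-composition with the $\ell_a$; since the isomorphism of Proposition~\ref{prop::isomorphismRR} intertwines $a\ten{k}1_{\dual}$ with $\ell_a$, it is automatically an isomorphism of $\Lambda G$-bimodules
\[ \Lambda G\dual\;\cong\;\End_{\Lambda}(\Lambda G), \]
where $\Lambda G$ is regarded as a right $\Lambda$-module. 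It therefore suffices to identify this endomorphism bimodule with $\Lambda G\ten{\Lambda}\Lambda G$.

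Next, since $\Lambda G=\bigoplus_{g\in G}\Lambda\ten{k}g$ is free of rank $|G|$, hence finitely generated projective, as a right $\Lambda$-module (with basis $\{1\ten{k}g\}_{g\in G}$, using that each $g$ acts as an automorphism), the dual basis lemma provides a natural isomorphism of $\Lambda G$-bimodules
\[ \Lambda G\ten{\Lambda}(\Lambda G)^{\vee}\;\xrightarrow{\ \sim\ }\;\End_{\Lambda}(\Lambda G),\qquad p\ten{\Lambda}\varphi\longmapsto\bigl(q\mapsto p\,\varphi(q)\bigr), \]
where $(\Lambda G)^{\vee}:=\Hom_{\Lambda}(\Lambda G_\Lambda,\Lambda_\Lambda)$ is endowed with its $(\Lambda,\Lambda G)$-bimodule structure $(\lambda\varphi c)(x)=\lambda\,\varphi(cx)$. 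The left $\Lambda G$-action on the tensor product comes from the left factor and the right $\Lambda G$-action from $(\Lambda G)^{\vee}$; one checks these correspond to post- and pre-composition with left multiplications, so the displayed map is indeed a bimodule isomorphism.

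The remaining step is to produce an isomorphism $(\Lambda G)^{\vee}\cong\Lambda G$ of $(\Lambda,\Lambda G)$-bimodules; tensoring it on the left with $\Lambda G$ over $\Lambda$ then yields the lemma. Let $\epsilon\colon\Lambda G\to\Lambda$ be the projection onto the component $\Lambda\ten{k}1_G$. I would first check that $\epsilon$ is both left and right $\Lambda$-linear, and then that
\[ \theta\colon\Lambda G\longrightarrow(\Lambda G)^{\vee},\qquad \theta(a)(x)=\epsilon(ax), \]
is a homomorphism of $(\Lambda,\Lambda G)$-bimodules, which is immediate from the left $\Lambda$-linearity of $\epsilon$ and associativity: $\theta(\lambda a)=\lambda\,\theta(a)$ and $\theta(ac)=\theta(a)c$. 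Finally, the associated pairing satisfies $\epsilon\bigl((\lambda\ten{k}g)(\mu\ten{k}h)\bigr)=\lambda\,g(\mu)$ when $h=g^{-1}$ and $0$ otherwise, so evaluating $\theta(a)=0$ at the elements $\mu\ten{k}g^{-1}$ forces every component of $a$ to vanish; hence $\theta$ is injective, and bijective by equality of the $k$-dimensions, both being $|G|\dim_k\Lambda$. Concatenating the three isomorphisms gives the claim.

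I expect the main obstacle to be purely bookkeeping: keeping the several bimodule structures mutually consistent, in particular verifying that the map of Proposition~\ref{prop::isomorphismRR} is a $\Lambda G$-bimodule (and not merely an algebra) isomorphism, and pinning down the $(\Lambda,\Lambda G)$-bimodule structure on $(\Lambda G)^{\vee}$ so that $\theta$ is bilinear on the nose. The only genuinely computational input is the nondegeneracy of the pairing $(a,x)\mapsto\epsilon(ax)$, which is precisely where the hypothesis that each $g$ acts as an automorphism of $\Lambda$ is used.
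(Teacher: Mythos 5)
Your proof is correct, and it shares the paper's overall skeleton: both arguments pass through $\End_{\Lambda}(\Lambda G)$ (endomorphisms of $\Lambda G$ as a right $\Lambda$-module), and your first isomorphism is literally the paper's map $\Phi_1$ from Proposition \ref{prop::isomorphismRR}. The justifications, however, are genuinely different. Where the paper checks right $\Lambda G$-linearity of $\Phi_1$ by direct computation, you note that an algebra isomorphism carrying the embedded copy $\Lambda G\smallten 1_{\dual}$ onto the left-multiplication operators $\ell_a$ automatically intertwines the two bimodule structures --- same content, cleaner argument. Where the paper defines $\Phi_2$ explicitly on the $\Lambda$-basis, sending $(1\smallten g_1)\otimes(1\smallten g_2)$ to $\varphi_{g_1,g_2}$, and verifies by hand that these are right $\Lambda$-linear, form a basis of $\End_{\Lambda}(\Lambda G)$, and assemble into a bimodule morphism, you factor that isomorphism into two standard pieces: the dual-basis (evaluation) isomorphism $\Lambda G\ten{\Lambda}(\Lambda G)^{\vee}\cong\End_{\Lambda}(\Lambda G)$, valid since $\Lambda G$ is free of rank $|G|$ as a right $\Lambda$-module, composed with the self-duality $(\Lambda G)^{\vee}\cong\Lambda G$ coming from the Frobenius-type pairing $(a,x)\mapsto\epsilon(ax)$. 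Unwinding your composite on the elements $(1\smallten g_1)\otimes(1\smallten g_2)$ recovers exactly the paper's $\varphi_{g_1,g_2}$, so the two proofs construct the same map by different means. What your route buys: it isolates where each hypothesis enters (projectivity for the dual-basis lemma, finite-dimensionality for the dimension count, the automorphism property of the $G$-action for freeness and nondegeneracy), and your third step is precisely the statement that $\Lambda\subseteq\Lambda G$ is a Frobenius extension, which is reusable information. What the paper's route buys: it is entirely self-contained, with no appeal to outside facts. One cosmetic remark: injectivity of your $\theta$ does not really need the full strength of the automorphism property, since evaluating $\theta(a)$ at $1\smallten g^{-1}$ already extracts the coefficient of $a$ at $g$.
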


\begin{proof}
We construct two isomorphisms of $\Lambda G$-bimodules $$ \xymatrix{\Lambda G \dual \ar[rr]^{\Phi_1} && \End_{\Lambda}(\Lambda G) && \Lambda G\ten{\Lambda} \Lambda G\ar[ll]_{\Phi_2}}.$$
The map $\Phi_1$ is the one given in \eqref{iso}. This is an isomorphism, and clearly a left $\Lambda G$-module map. So it remains to show that it is a morphism of right $\Lambda G$-modules.

The right $\Lambda G$-module structure of $\Lambda G \dual$ is induced by the embedding $\Lambda G\to \Lambda G \dual$, while the right $\Lambda G$-module structure of $\End_{\Lambda}(\Lambda G)$ comes from the left $\Lambda G$-module structure of $\Lambda G$. A direct computation yields
$$\Phi_1((\lambda\smallten g\smallten\chi).(\lambda'\smallten g'\smallten 1_{\dual}))(\mu\smallten h)=\Phi_1(\lambda\smallten g\smallten \chi)((\lambda'\smallten g').(\mu\smallten h)),$$
thus $\Phi_1$ is an isomorphism of $\Lambda G$-bimodules.

The left $\Lambda$-module $\Lambda G\ten{\Lambda}\Lambda G$ is a free module with basis given by the elements $(1\smallten g_1)\otimes (1\smallten g_2)$, $g_1,g_2\in G$. We define $\Phi_2$ on this basis and extend it by left $\Lambda$-linearity:
We set  $\Phi_2((1\smallten g_1)\otimes (1\smallten g_2))$ to be the map $$\varphi_{g_1,g_2}:
\mu\smallten h\mapsto (1\smallten g_1).\delta_{g_2,h^{-1}} g_2(\mu)\smallten 1_G,$$
where $\delta_{i,j}$ is the Kronecker symbol.

First, a direct computation gives 
$$\varphi_{g_1,g_2}((\mu\smallten h).(\lambda\smallten 1_{G}))=(\varphi_{g_1,g_2} (\mu \smallten h)).(\lambda\smallten 1_{G}),$$
so $\varphi_{g_1,g_2}$ is indeed a map of right $\Lambda$-modules.

Next, note that the elements $\varphi_{g_1,g_2}$ form a $\Lambda$-basis of $\End_{\Lambda}(\Lambda G)$, so $\Phi_2$ is an isomorphism of left $\Lambda$-modules. Moreover $\Phi_2$ is clearly a left $\Lambda G$-morphism. Finally by a direct computation we get that 
$$\Phi_2((1\smallten g_1)\otimes ((1\smallten g_2).(\lambda\smallten g))(\mu\smallten h)= \varphi_{g_1,g_2}((\lambda\smallten g).(\mu\smallten h)),$$
hence $\Phi_2$ is a right $\Lambda G$-module morphism.
\end{proof}

\noindent{\it Proof of Theorem \ref{thm::bijection-tilting}.}
Let $T\in \mathcal{D}^b(\Lambda)$ be a $G$-tilting object. Then $T\lten{\Lambda} \Lambda G$ is $\dual$-invariant by example \ref{example::G-invariant}(2).
As in the proof of proposition \ref{prop::G-hat-iso}, one sees that 
\[
 \Hom_{\mathcal{D}^b(\Lambda G)}(T \lten{\Lambda} \Lambda G, T \lten{\Lambda} \Lambda G [i]) \cong
\Hom_{\mathcal{D}^b(\Lambda)}(T,  \bigoplus_{g \in G} T^g [i])
 \]
and therefore the object $T\lten{\Lambda} \Lambda G$ is rigid since $T$ is so.
To show that thick$(T \lten{\Lambda} \Lambda G) = \mathcal{D}^b(\Lambda  G)$, consider an object 
$X \in \mathcal{D}^b(\Lambda  G).$
Since $T$ is tilting, we have
\[ X_\Lambda \in \mathcal{D}^b\Lambda  = \thick (T),
\]
hence $X \lten{\Lambda} \Lambda G \in \thick(T \lten\Lambda  G).$
Now we use the fact that $\Lambda G$ is projective as $\Lambda$-module, and lemma \ref{lemma:G-bimodule} to obtain
 \begin{align*}
 X \lten{\Lambda} \Lambda G =
 (X \lten{\Lambda G} \Lambda G)\lten{\Lambda} \Lambda G  &\cong
 X \lten{\Lambda G}(\Lambda G \lten{\Lambda} \Lambda G)\\
   &\cong 
   X \lten{\Lambda G}\Lambda G \dual\\
    &\cong 
  \bigoplus_{\chi \in \dual} X^\chi
 \end{align*}
 Since a thick subcategory is closed under direct factors, we conclude $X \in \thick(T \lten{\Lambda} \Lambda G).$
 \medskip
 
 Conversely, let $U\in \mathcal{D}^b(\Lambda G)$ be a $\dual$-tilting object. Then $U_\Lambda$ is $G$-invariant by example \ref{example::G-invariant}(3).
To show that $U$ is rigid, we verify
 \begin{align*}
 \Hom_{\mathcal{D}^b\Lambda}(U_\Lambda, U_\Lambda  [i]) &\cong
\Hom_{\mathcal{D}^b(\Lambda G)}
(U_\Lambda\lten{\Lambda}\Lambda G, U[i])\\
& \cong 
\Hom_{\mathcal{D}^b(\Lambda G)}
(U_\Lambda\lten{\Lambda G}(\Lambda G \otimes_\Lambda \Lambda G), U[i])\\
& \cong 
\Hom_{\mathcal{D}^b(\Lambda G)}(\bigoplus_{\chi \in \dual}U^\chi, U [i]) =0.
 \end{align*}
Consider an object $X \in \mathcal{D}^b\Lambda$. Then 
$(X\lten{\Lambda}\Lambda G)_\Lambda \in \thick(U_\Lambda)$ since $X\lten{\Lambda}\Lambda G \in \mathcal{D}^b\Lambda G = \thick(U).$
As before, this implies $X \in \thick(U_\Lambda)$ since $X$ is a direct factor of 
$(X\lten{\Lambda}\Lambda G)_\Lambda \cong \bigoplus_{g \in G}X^g.$
\medskip

We have so far verified that the functors $-\lten{\Lambda}\Lambda G$ and $Res$ induce maps 
\[ \add(T) \longmapsto \add(T\lten{\Lambda}\Lambda G)
\]
and 
\[ \add(U) \longmapsto \add(U_{\Lambda})
\]
between $G$-tilting subcategories of $\mathcal{D}^b\Lambda$ and $\dual$-tilting subcategories of $\mathcal{D}^b\Lambda G.$
To verify that these maps are inverse to each other, observe that
\[  \add((T\lten{\Lambda}\Lambda G)_\Lambda) = \add(\bigoplus_{g \in G}T^g) =   \add(T)
\]
since $T$ is $G$-invariant.
Likewise,
\[  \add(U_\Lambda\lten{\Lambda}\Lambda G) = \add(\bigoplus_{\chi \in \dual}U^\chi) =   \add(U).
\]
\hfill$\qed$\bigskip

Note that as a consequence of this result, if $T\in \mathcal{D}^b(\Lambda)$ is a $G$-invariant tilting object, and if $\Lambda'=\End_{\mathcal{D}^b(\Lambda)}(T)$ is the corresponding $G$-algebra, then we have the following commutative diagram 

$$\xymatrix{\mathcal{D}^b(\Lambda)\ar[dd]^{-\lten{\Lambda} \Lambda G} &&& \mathcal{D}^b(\Lambda')\ar[lll]^{-\lten{\Lambda'} T}\ar[dd]^{-\lten{\Lambda'} \Lambda' G} \\ &&&\\
\mathcal{D}^b(\Lambda G)&&& \mathcal{D}^b(\Lambda'G)\ar[lll]^{-\lten{\Lambda' G} (T\lten{\Lambda}\Lambda G)} 
}$$ where both horizontal maps are equivalences. This leads us to state the following:

\begin{definition} Two $G$-algebras $\Lambda$ and $\Lambda'$ are called $G$-derived equivalent if there exists a $G$-invariant tilting object $T\in \mathcal{D}^b(\Lambda)$ together with a $G$-isomorphism $\End_{\mathcal{D}^b(\Lambda)}(T)\simeq \Lambda'$.
We denote it by $\mathcal{D}^b(\Lambda)\underset{G}{\sim}\mathcal{D}^b(\Lambda')$.
\end{definition}

Therefore we have 

\begin{corollary}\label{cor::Geq=G-hat-eq}
Let $\Lambda$ and $\Lambda'$ be $G$-algebras, then we have 
$$\mathcal{D}^b(\Lambda)\underset{G}{\sim}\mathcal{D}^b(\Lambda ') \Rightarrow \mathcal{D}^b(\Lambda G )\underset{\dual}{\sim}\mathcal{D}^b(\Lambda 'G).$$
If moreover there exists a $G$-invariant idempotent $\theta$ of $\Lambda G\dual$ and $\theta'$ of $\Lambda' G \dual$ together with $G$-isomorphisms $\Lambda \simeq \theta \Lambda G \dual \theta$ and $\Lambda' \simeq \theta' \Lambda' G \dual \theta'$, then we have 

$$\mathcal{D}^b(\Lambda)\underset{G}{\sim}\mathcal{D}^b(\Lambda ') \Leftrightarrow \mathcal{D}^b(\Lambda G )\underset{\dual}{\sim}\mathcal{D}^b(\Lambda 'G).$$

\end{corollary}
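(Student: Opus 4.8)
The plan is to read the first implication directly off Theorem \ref{thm::bijection-tilting} and Proposition \ref{prop::G-hat-iso}, and then to derive the reverse implication by feeding the first one the $\dual$-algebras $\Lambda G$ and $\Lambda'G$, descending afterwards along the idempotents $\theta,\theta'$. For the first implication, suppose $\mathcal{D}^b(\Lambda)\underset{G}{\sim}\mathcal{D}^b(\Lambda')$, so there is a $G$-invariant tilting object $T\in\mathcal{D}^b(\Lambda)$ and a $G$-isomorphism $\varphi:\End_{\mathcal{D}^b(\Lambda)}(T)\cong\Lambda'$. By Theorem \ref{thm::bijection-tilting}, $T\lten{\Lambda}\Lambda G$ is a $\dual$-invariant tilting object of $\mathcal{D}^b(\Lambda G)$, and by Proposition \ref{prop::G-hat-iso} there is a $\dual$-isomorphism $\End_{\mathcal{D}^b(\Lambda)}(T)G\cong\End_{\mathcal{D}^b(\Lambda G)}(T\lten{\Lambda}\Lambda G)$. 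I would then observe that the skew-group construction is functorial on $G$-isomorphisms: $\lambda\smallten g\mapsto\varphi(\lambda)\smallten g$ is a $\dual$-isomorphism $\End(T)G\cong\Lambda'G$ precisely because $\varphi$ commutes with $G$. Composing yields a $\dual$-isomorphism $\End_{\mathcal{D}^b(\Lambda G)}(T\lten{\Lambda}\Lambda G)\cong\Lambda'G$, which is exactly the data certifying $\mathcal{D}^b(\Lambda G)\underset{\dual}{\sim}\mathcal{D}^b(\Lambda'G)$.

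For the reverse implication I would first apply the implication just proved to the $\dual$-algebras $\Lambda G$ and $\Lambda'G$. Since $(\Lambda G)\dual=\Lambda G\dual$ and $\doublehat{G}=G$, the hypothesis $\mathcal{D}^b(\Lambda G)\underset{\dual}{\sim}\mathcal{D}^b(\Lambda'G)$ becomes a $G$-derived equivalence $\mathcal{D}^b(\Lambda G\dual)\underset{G}{\sim}\mathcal{D}^b(\Lambda'G\dual)$. Unwinding the definition, this provides a $G$-invariant tilting object of $\mathcal{D}^b(\Lambda G\dual)$ whose endomorphism $G$-algebra is $G$-isomorphic to $\Lambda'G\dual$, hence a $G$-equivariant triangle equivalence $\mathcal{D}^b(\Lambda'G\dual)\xrightarrow{\sim}\mathcal{D}^b(\Lambda G\dual)$ sending the canonical tilting object $\Lambda'G\dual$ to it.

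It then remains to descend from $\Lambda G\dual$ to $\Lambda$ using $\theta$, and likewise for $\Lambda'$. The key preliminary point is that $\theta$ is a \emph{full} idempotent: by the Remark following Proposition \ref{prop::isomorphismRR}, $\Lambda$ is Morita equivalent to $\Lambda G\dual$, so $\theta\Lambda G\dual\theta\cong\Lambda$ is Morita equivalent to $\Lambda G\dual$; comparing the number of isomorphism classes of simple modules forces $\theta S\neq 0$ for every simple $\Lambda G\dual$-module $S$, i.e. $\Lambda G\dual\,\theta\,\Lambda G\dual=\Lambda G\dual$. Hence the bimodule $(\Lambda G\dual)\theta$ induces a Morita, and a fortiori derived, equivalence $\mathcal{D}^b(\Lambda)\simeq\mathcal{D}^b(\Lambda G\dual)$; because $\theta$ is $G$-invariant this bimodule is $G$-equivariant and the $G$-isomorphism $\Lambda\cong\theta\Lambda G\dual\theta$ matches the two $G$-actions, so the equivalence is $G$-equivariant, and the same holds for $\theta'$. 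I would compose these with the equivalence of the previous paragraph to obtain a $G$-equivariant equivalence $\mathcal{D}^b(\Lambda')\xrightarrow{\sim}\mathcal{D}^b(\Lambda)$, and let $T$ be the image of the rank-one tilting object $\Lambda'$. A $G$-equivariant equivalence carries $G$-invariant tilting objects to $G$-invariant tilting objects and induces $G$-isomorphisms of endomorphism algebras (as in Example \ref{example::G-invariant}(1)), so $T$ is a $G$-invariant tilting object with $\End_{\mathcal{D}^b(\Lambda)}(T)\cong\Lambda'$ as $G$-algebras, certifying $\mathcal{D}^b(\Lambda)\underset{G}{\sim}\mathcal{D}^b(\Lambda')$.

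The main obstacle is exactly this descent step: one must check carefully that a $G$-invariant idempotent yields an equivalence that is genuinely $G$-equivariant at the derived level and tracks endomorphism algebras up to $G$-isomorphism rather than mere isomorphism. The fullness of $\theta$—the very point the Remark flags as not automatic—is what makes the Morita descent available; without the hypothesized $G$-invariant idempotents only the first implication survives, which is why the equivalence is stated as a one-sided implication in general.
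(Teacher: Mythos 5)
Your proof is correct and follows exactly the route the paper intends: the paper states this corollary without a written proof, as a direct consequence of Theorem \ref{thm::bijection-tilting} together with Proposition \ref{prop::G-hat-iso} (forward direction), the converse being obtained by feeding the forward implication the $\dual$-algebras $\Lambda G$ and $\Lambda' G$, identifying $\doublehat{G}=G$, and descending along the $G$-invariant idempotents $\theta$, $\theta'$. Your verification that $\theta$ is automatically a \emph{full} idempotent (by comparing numbers of simple modules through the Morita equivalence noted after Proposition \ref{prop::isomorphismRR}), so that $(-)\theta$ really is a $G$-equivariant Morita equivalence, correctly supplies a detail the paper leaves implicit.
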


\section{Skew-gentle algebras and dissections}

\subsection{Skew-gentle algebras}
We first recall from \cite{GePe} the concept of skew-gentle algebras and then study some of their basic properties.
\begin{definition}
A {\em gentle pair} is a pair $(Q,I)$ given by a quiver $Q$ and  a subset $I$ of paths of length two in $Q$ such that 
\begin{itemize}
\item for each $i\in Q_0$, there are at most two arrows with source $i$, and at most two arrows with target $i$;
\item for each arrow $\alpha:i\to j$ in $Q_1$, there exists at most one arrow $\beta$ with target $i$ such that $\beta\alpha\in I$ and at most one arrow $\beta'$ with target $i$ such that $\beta'\alpha\notin I$;

\item for each arrow $\alpha:i\to j$ in $Q_1$, there exists at most one arrow $\beta$ with source $j$ such that $\alpha\beta\in I$ and at most one arrow $\beta'$ with source $j$ such that $\alpha\beta'\notin I$.
\item the algebra $A(Q,I):=kQ/I$ is finite dimensional.
\end{itemize}
An algebra is {\em gentle} if it admits a presentation $A=kQ/I$ where $(Q,I)$ is a gentle pair.
\end{definition}

We follow \cite{BeHo} stating the  definition which appeared first in \cite{GePe}:
\begin{definition}
A \emph{skew-gentle triple} $(Q,I,{\rm Sp})$ is the data of  a quiver $Q$,  a subset $I$ of paths of length two in $Q$, and  a subset ${\rm Sp}$ of loops in $Q$ (called 'special loops') such that $(Q, I\amalg  \{e^2, e\in {\rm Sp})$ is a gentle pair. 
In this case, the algebra $\bar{A}(Q,I,{\rm Sp}):= kQ/\langle I\amalg \{e^2-e,e\in {\rm Sp}\rangle,$ is  called a  {\em skew-gentle algebra.}
Note that as a gentle algebra is finite dimensional, so is a skew-gentle algebra.
\end{definition}

Skew-gentle algebras are known to be tame algebras, and a classification of their indecomposable modules is given in \cite{CB,De} using the notion of a certain matrix problem called clan, hence they use the name clannish algebra.
Skew-gentle algebras can also be related to clannish matrix problems by gluing them together from smaller pieces as in \cite{Br}, we present this method here to obtain another description of the class of skew-gentle algebras: 

First recall from \cite[Prop 5.2]{Br} that gentle algebras can be obtained by gluing together the following puzzle pieces $S_n$ and $\widetilde{S}_n$:

\begin{itemize}
    
    \item[(a)] $S_n$ denotes, for $n \ge 1$, the linearly oriented quiver of type $A_n$ with radical square zero:
     \[ \xymatrix{
x_1 \; \ar[r]^{\alpha_1}  & x_2 \ar[r]^{\alpha_2} & \quad \cdots & x_{n-1} \ar[r]^{\alpha_{n-1}} & x_n}\]
with $\alpha_i \alpha_{i+1}=0$ for $1 \le i \le n-2. $\medskip
    \item[(b)] $\widetilde{S}_n$ denotes, for $n \ge 1$, the cyclically oriented quiver of type $\widetilde{A}_n$ with radical square zero:
     \[ \xymatrix{
x_1 \; \ar[r]^{\alpha_1}  & x_2 \ar[r]^{\alpha_2} & \quad \cdots & x_{n-1} \ar[r]^{\alpha_{n-1}} & x_n=x_1}\]
with $\alpha_i \alpha_{i+1}=0$ for $1 \le i \le n-1 \mod n. $
    
\end{itemize}
A gluing is obtained by choosing a (not necessarily perfect) matching of the vertices of a collection of puzzle pieces of type (a) or (b), and identifying the pairs of vertices related by the matching. The resulting algebra is gentle if it is finite-dimensional, and every gentle algebra is obtained in this way. 

We produce skew-gentle algebras by allowing one additional puzzle piece, a special loop $sp$:

\begin{itemize}
    
    \item[(c)] $sp$ denotes the quiver with one vertex and one loop $e$ with relation $e^2-e=0$ :
     \[ \xymatrix{ x \ar@(ur,dr)^{e}[]
}
    \]
\end{itemize}
\bigskip

\begin{lemma}\label{lem::gluing}
A gluing of puzzle pieces of type (a), (b) or (c)  yields a  skew-gentle algebra if it is finite-dimensional, and every skew-gentle algebra is obtained in this way.
\end{lemma}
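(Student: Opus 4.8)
The plan is to reduce everything to the gentle case, where the analogous statement is already available as \cite[Prop.~5.2]{Br}, via the single observation that the special-loop piece $sp$ has exactly the same underlying quiver as the cyclic piece $\widetilde{S}_1$ (one vertex, one loop), differing only in that the relation $e^2=0$ is replaced by $e^2-e=0$. Thus passing between a gluing of pieces (a),(b),(c) and a gluing of pieces (a),(b) should amount to toggling, on each special loop $e$, the relation $e^2$ against $e^2-e$, while leaving the quiver $Q$, the vertex identifications, and all other relations $I$ untouched.

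The first key step is to show that, in any gentle pair whose relations contain $e^2$ for some loop $e$ at a vertex $x$, this loop constitutes its own piece $\widetilde{S}_1$. I would argue locally at $x$: since $e^2$ is a relation and the pair is gentle, the gentle axioms at $x$ force $\beta e$ not to be a relation for every arrow $\beta\neq e$ ending at $x$, and $e\gamma$ not to be a relation for every arrow $\gamma\neq e$ starting at $x$ (in each case $e$ is already the unique arrow realizing a composition that is a relation). Hence, in the decomposition of \cite{Br}, whose pieces are the maximal chains of arrows in which consecutive compositions are relations, the loop $e$ is composition-adjacent only to itself, so its piece is precisely the cyclic chain of length one $\widetilde{S}_1$. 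In particular a special loop can never arise as a non-loop arrow whose two endpoints happen to be glued together.

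With this local lemma in hand the two directions are routine. For the forward direction I would take a finite-dimensional gluing of pieces (a),(b),(c), let ${\rm Sp}$ be the set of loops arising from the (c)-pieces, and replace each such $sp$ by $\widetilde{S}_1$; granting the finiteness comparison below, \cite{Br} shows the resulting gluing of (a),(b) presents the gentle algebra $A(Q,I\amalg\{e^2\})$, so that $(Q,I,{\rm Sp})$ is a skew-gentle triple and reinstating $e^2-e$ recovers the original algebra as $\bar{A}(Q,I,{\rm Sp})$. For the converse, given $\bar{A}(Q,I,{\rm Sp})$ the pair $(Q,I\amalg\{e^2\})$ is a gentle pair by definition, hence a gluing of (a),(b) by \cite{Br}; the local lemma shows that each $e\in{\rm Sp}$ occupies a $\widetilde{S}_1$ piece, and replacing these by $sp$ yields a gluing of (a),(b),(c) presenting $\bar{A}(Q,I,{\rm Sp})$.

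The main point requiring care, and the step I expect to be the real obstacle, is the finite-dimensionality clause, since the forward direction exchanges the relation $e^2-e$ for $e^2$ and I must know that $\bar{A}(Q,I,{\rm Sp})$ is finite-dimensional if and only if $A(Q,I\amalg\{e^2\})$ is. I would handle this by filtering $kQ$ by path length and comparing associated graded algebras: the relation $e^2-e$ has leading term $e^2$, so $\gr\bar{A}(Q,I,{\rm Sp})$ is a quotient of $A(Q,I\amalg\{e^2\})$, and to obtain equality of dimensions it suffices to check that $\{I,\,e^2-e\}$ is a noncommutative Gröbner basis. Since $I$ is a monomial (hence automatically Gröbner) ideal, the only overlaps to resolve involve the single non-monomial relation $e^2-e$: the self-overlap $e^3$ reduces to $e$ by either application of $e^2\mapsto e$, while the local lemma rules out any overlap of $e^2$ with a relation in $I$ (no relation in $I$ has the form $e\gamma$ or $\beta e$ with $\gamma,\beta\neq e$). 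Consequently $\dim_k\bar{A}(Q,I,{\rm Sp})=\dim_k A(Q,I\amalg\{e^2\})$, the permitted paths form a common basis, and the two finiteness conditions coincide, which is exactly what is needed to transport the qualifier ``if it is finite-dimensional'' across both directions.
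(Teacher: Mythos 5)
Your proof is correct and follows essentially the same route as the paper: both reduce to the gentle case of \cite[Prop.~5.2]{Br} by toggling each special loop's relation $e^2-e$ against $e^2$, and both rest on the observation that a loop with $e^2$ among the relations must form its own piece $\widetilde{S}_1$. The only difference is one of rigor, not of strategy: your local lemma (derived from the gentle axioms) and your diamond-lemma verification that the toggle preserves dimension, hence finite-dimensionality, make explicit two points the paper's three-line proof leaves implicit.
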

\begin{proof}
Replacing all special loops in a skew-gentle algebra by loops $e$ with $e^2=0$ one obtains a gentle algebra, which is glued from puzzle pieces (a) and (b) by \cite[Prop 5.2]{Br}. Note that the condition of the algebra being finite dimensional requires that every loop $e$ in the gentle case satisfies $e^2=0$. The special loops are then obtained from gluing pieces of type (c) instead of loops $e$ with $e^2=0$.
\end{proof}

The proof of the previous lemma used the fact that replacing all special loops in a skew-gentle algebra by loops $e$ with $e^2=0$ one obtains a gentle algebra. More generally, given a skew-gentle algebra 
$\bar{A}=\bar{A}(Q,I,{\rm Sp}),$ let us define for every $t \in k$ the algebra
\[\bar{A}_t := kQ/\langle I\amalg \{e^2-te,e\in {\rm Sp}\rangle.\]
Then $\bar{A}_0$ is the gentle algebra used in the proof of Lemma \ref{lem::gluing}, and $\bar{A}_1$ is the original skew-gentle algebra. 

\begin{lemma}\label{lem::degeneration}
Let $k$ be an algebraically closed field. Any skew-gentle algebra $\bar{A}(Q,I,{\rm Sp})$ is a deformation of the corresponding gentle algebra $\bar{A}_0$.
\end{lemma}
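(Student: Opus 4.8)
The plan is to view the assignment $t\mapsto \bar{A}_t$ as a curve in the variety of $d$-dimensional algebras and to show that it passes through $\bar{A}_0$ while running generically inside the isomorphism class of the skew-gentle algebra $\bar{A}=\bar{A}_1$. Fix $d:=\dim_k \bar{A}_0$ and let $\mathrm{Alg}_d$ denote the affine variety of associative unital multiplications on $k^d$, equipped with its $\mathrm{GL}_d$-action by transport of structure; recall that, by definition, $\bar{A}$ is a \emph{deformation} of $\bar{A}_0$ exactly when $\bar{A}_0$ lies in the Zariski closure $\overline{\mathrm{GL}_d\cdot \bar{A}}$. I will produce a morphism $\theta\colon \mathbb{A}^1\to \mathrm{Alg}_d$ with $\theta(0)=\bar{A}_0$ and $\theta(t)\cong \bar{A}$ for all $t\neq 0$; the conclusion is then formal.

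First I would establish that $\dim_k \bar{A}_t$ is constant, equal to $d$. For this I consider the $k[t]$-algebra $\mathcal{A}:=k[t]Q/\langle I\amalg\{e^2-te : e\in{\rm Sp}\}\rangle$ and apply Bergman's diamond lemma over the base ring $k[t]$ to the rewriting system $p\to 0$ for $p\in I$ together with $e^2\to te$ for $e\in{\rm Sp}$. The reductions terminate, and the irreducible monomials are exactly the paths containing no subpath in $I$ and no subpath $e^2$, i.e.\ precisely the paths forming the standard basis of the gentle algebra $A(Q,I\amalg\{e^2\})=\bar{A}_0$. Since $I\amalg\{e^2\}$ consists of monomials, its reductions are trivially confluent; the only non-monomial rule is $e^2\to te$, whose overlap ambiguities (the word $e^3$, which reduces to $t^2e$ in either order, and the overlaps with monomial relations passing through a special loop) all resolve by direct computation. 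Hence these monomials form a $k[t]$-basis of $\mathcal{A}$, so $\mathcal{A}$ is free over $k[t]$ and every specialization $\bar{A}_t=\mathcal{A}\otimes_{k[t]}k[t]/(t-t_0)$ has dimension equal to the number of these monomials, namely $d$.

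Next I would treat the generic fibre: for $t\neq 0$ the substitution $e\mapsto te$ on special loops (and the identity on all other arrows) defines an algebra isomorphism $\bar{A}_t\xrightarrow{\sim}\bar{A}_1=\bar{A}$, since it sends each relation $e^2-te$ to $t^2(e^2-e)$ and scales the relations of $I$ by the unit $t$. Because the structure constants of $\mathcal{A}$ in the basis above are polynomial in $t$, the map $\theta\colon\mathbb{A}^1\to\mathrm{Alg}_d$, $t\mapsto\bar{A}_t$, is a morphism of varieties whose restriction to $k^\times$ lands in the single orbit $\mathrm{GL}_d\cdot\bar{A}$. As $k$ is algebraically closed, hence infinite, $k^\times$ is dense in $\mathbb{A}^1$, so continuity of $\theta$ gives $\bar{A}_0=\theta(0)\in\overline{\theta(k^\times)}\subseteq\overline{\mathrm{GL}_d\cdot\bar{A}}$, which is the assertion.

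\textbf{Main obstacle.} The only nonformal point is the constant-dimension step: one must rule out that passing from $e^2=0$ to $e^2=te$ collapses or enlarges the algebra. The diamond-lemma verification over $k[t]$—ensuring that the overlaps involving the special loops resolve and interact correctly with the monomial relations $I$—is where the gentle hypothesis is genuinely used and is the step deserving care; the isomorphism of generic fibres and the orbit-closure argument are routine.
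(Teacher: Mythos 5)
Your proposal is correct and takes essentially the same route as the paper: both proofs hinge on the rescaling isomorphism $e \mapsto te$ (identity on the other arrows) showing $\bar{A}_t \cong \bar{A}_1$ for all $t \neq 0$, followed by the conclusion that $\bar{A}_0$ lies in the closure of this family of isomorphic algebras. The only difference is that you explicitly verify, via the diamond lemma over $k[t]$, that the family has constant dimension $d$ (equivalently, that $\mathcal{A}$ is free over $k[t]$ with the gentle monomial basis) and polynomial structure constants — a flatness point the paper's proof leaves implicit when it invokes one-parameter families in the sense of Geiss and Crawley-Boevey.
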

\begin{proof}
We assume the field to be algebraically closed so we can speak about deformation of  $k$-algebras along one-parameter families in the sense of \cite{Ge,CB2}. It is sufficient to show that for all $t \neq 0$ the algebras  $\bar{A}_t$ are isomorphic to $\bar{A}_1$, since $\bar{A}_0$ lies then in the closure of this family of isomorphic algebras.
Define, for all $t \in k$, an algebra morphism $\phi_t : \bar{A}_t \to \bar{A}_1$
by sending $e \mapsto te$ if $e$ is a special loop, and $a \mapsto a$ for the remaining arrows.
This transforms the relation $e^2-te =0$ in $\bar{A}_t$ into $t^2(e^2-e) =0$, thus $\phi_t$ is indeed well-defined. It admits, for all $t \neq 0$ an inverse defined by sending $e \mapsto \frac{e}{t}$.
\end{proof}

Note that the theorem of Geiss \cite{Ge} implies that skew-gentle algebras are tame since they degenerate to a gentle algebra. However, degeneration does not provide precise information about indecomposable modules, so we use Lemma \ref{lem::degeneration} more to compare different geometric models.
A similar deformation argument to the one in Lemma \ref{lem::degeneration} has been used in \cite{BPS,GL-FS} to show that similar classes of algebras are tame.

\subsection{The quiver of a skew-gentle algebra}\label{subsection::quiver}

Note that every gentle algebra is skew-gentle (with empty set of special loops). In this case, the quiver $Q$ is the quiver $Q_{\bar{A}}$ defined by the algebra $\bar{A}$. This is not the case when we have special loops, since the relation $e^2-e$ is not admissible. In fact, the idempotent $e$ attached to vertex $i$ of $Q$ splits the vertex into two so that the quiver $Q_{\bar{A}}$ of the algebra $\bar{A}$ has two vertices for every vertex of $Q$ with a special loop. The arrows are split accordingly, hence the quiver of a skew-gentle algebras is described as follows:

Consider the skew-gentle algebra $\bar{A}=\bar{A}(Q,I,{\rm Sp}).$ We divide the vertex set $Q_0$ of the quiver $Q$ into two disjoint sets:
Denote by $Q_0^{sp}$ the set of 'special' vertices of $Q$ where a special loop is attached, and let $Q_0^{ord}$ be the remaining 'ordinary' vertices.
Then the quiver $\bar{Q}$ of the algebra $\bar{A}$ is given as follows:
\begin{itemize}
\item
The vertices of $\bar{Q}$ are bijection with 
$$Q_0^{ord} \cup (Q_0^{sp} \times \mathbb Z_2).$$
\item
Given two ordinary vertices $i$ and $j$ in $Q_0^{ord}$, then arrows in $\bar{Q}$ between $i$ and $j$ are bijection with 
 the arrows in $Q$ between $i$ and $j$;
\item Given an ordinary vertex $i$ and a special vertex $j \in Q_0^{sp}$, there are  two arrows 
 \[
  \scalebox{0.8}{
  \begin{tikzpicture}
  \node (I) at (0,0) {$i$};
  \node (J) at (2,1) {$j_0$};
  \node (J1) at (2,-1) {$j_1$};
  \draw[thick, ->] (I)--node[fill=white, inner sep=0pt]{$^0\alpha$}(J);
   \draw[thick, ->] (I)--node[fill=white, inner sep=0pt]{$^1\alpha$}(J1);
  \end{tikzpicture}}\] in $\bar{Q}$ for every arrow 
   \[
  \scalebox{0.8}{
  \begin{tikzpicture}
  \node (I) at (0,0) {$i$};
  \node (J) at (2,0) {$j$};
  \draw[thick, ->] (I)--node[fill=white, inner sep=0pt]{$\alpha$}(J);
  \end{tikzpicture}}\]
  in $Q$.
  \medskip
\item dually, every arrow 
   \[
  \scalebox{0.8}{
  \begin{tikzpicture}
  \node (I) at (0,0) {$j$};
  \node (J) at (2,0) {$i$};
  \draw[thick, ->] (I)--node[fill=white, inner sep=0pt]{$\alpha$}(J);
  \end{tikzpicture}}\]
  in $Q$ with $i$ ordinary and $j$ special yields two arrows
 \[
  \scalebox{0.8}{
  \begin{tikzpicture}
  
  \begin{scope}[xshift=5cm]
   \node (I) at (2,0) {$i$};
  \node (J) at (0,1) {$j_0$};
  \node (J1) at (0,-1) {$j_1$};
  \draw[thick, ->] (J)--node[fill=white, inner sep=0pt]{$\alpha^0$}(I);
   \draw[thick, ->] (J1)--node[fill=white, inner sep=0pt]{$\alpha^1$}(I);
   
  \end{scope}
  \end{tikzpicture}}\] in $\bar{Q}$;
  \medskip
\item for every arrow 
   \[ \xymatrix{ i 
 \ar[r]^{\alpha}  & j}\]
  in $Q$ with both $i$ and $j$ special, there are four arrows 
\[
  \scalebox{0.8}{
  \begin{tikzpicture}[scale=1, >=stealth]
  \node (I) at (0,1) {$i_0$};
  \node (I1) at (0,-1) {$i_1$};
  \node (J) at (3,1) {$j_0$};
  \node (J1) at (3,-1) {$j_1$};
  \draw[thick, ->] (I)--node[fill=white, inner sep=0pt]{$^0\alpha ^0$}(J);
   \draw[thick, ->] (I1)--node[fill=white, inner sep=0pt, xshift=15pt, yshift=10pt]{$^0\alpha ^1$}(J);
    \draw[thick, ->] (I)--node[fill=white, inner sep=0pt, xshift=-15pt, yshift=10pt]{$^1\alpha ^0$}(J1);
   \draw[thick, ->] (I1)--node[fill=white, inner sep=0pt]{$^1\alpha ^1$}(J1);
   \end{tikzpicture}}\]
in $\bar{Q}$.
\end{itemize}
\bigskip

The relations generating the ideal $\bar{I}$ of the algebra $\bar{A}=k \bar{Q}/\bar{I}$ can be described as follows: Consider a relation $\beta\alpha$  in $I$ 
 \[ \xymatrix{
 \ar[r]^{\alpha}  & i \ar[r]^{\beta} &  }.\]
If  $i$ is an ordinary vertex, then we have  $^{\epsilon}\beta\alpha^{\epsilon'}\in \bar{I}$, for each $\epsilon=0,1,\emptyset$ and $\epsilon'=0,1,\emptyset$, where the expression makes sense. 
When $i$ is a special vertex, then we have $(^{\epsilon}\beta^0) (^0\alpha^{\epsilon'})+(^{\epsilon}\beta^1)( ^1\alpha^{\epsilon'})\in \bar{I}$, for all possible $\epsilon=0,1,\emptyset$ and $\epsilon'=0,1,\emptyset$.
\bigskip

\begin{example}\label{example::garland}
Consider the skew-gentle algebra $\bar{A}=\bar{A}(Q,I,{\rm Sp})$ obtained by gluing a piece $S_5$ (which is of type (a)) with three special loops in the middle vertices, thus the quiver $Q$ is given by
 \[ \xymatrix{ 1 \ar[r]^{\alpha} & 2 \ar@(ul,ur)^{e}[] \ar[r]^{\beta} & 3
 \ar@(ul,ur)^{f}[] \ar[r]^{\gamma} & 4
 \ar@(ul,ur)^{g}[] \ar[r]^{\delta} & 5
}
    \]
with relations $\alpha \beta = \beta \gamma = \gamma \delta = 0$ and special loops Sp $=\{e,f,g \}$.
Then the quiver $\bar{Q}$ of the algebra $\bar{A}=k \bar{Q}/\bar{I}$ is a garland where all squares are anti-commutative:
\[
  \scalebox{0.8}{
  \begin{tikzpicture}
  \node (I) at (0,0) {$1$};
  \node (J) at (2,1) {$2_0$};
  \node (J1) at (2,-1) {$2_1$};
  \draw[thick, ->] (I)--node[fill=white, inner sep=0pt]{$^0\alpha$}(J);
   \draw[thick, ->] (I)--node[fill=white, inner sep=0pt]{$^1\alpha$}(J1);
   
  \node (K) at (4,1) {$3_0$};
  \node (K1) at (4,-1) {$3_1$};
  \draw[thick, ->] (J)--node[fill=white, inner sep=0pt]{$^0\beta ^0$}(K);
   \draw[thick, ->] (J1)--node[fill=white, inner sep=0pt, xshift=15pt, yshift=10pt]{$^0\beta ^1$}(K);
    \draw[thick, ->] (J)--node[fill=white, inner sep=0pt, xshift=-15pt, yshift=10pt]{$^1\beta ^0$}(K1);
   \draw[thick, ->] (J1)--node[fill=white, inner sep=0pt]{$^1\beta ^1$}(K1);
   
   \node (H) at (6,1) {$4_0$};
  \node (H1) at (6,-1) {$4_1$};
  \draw[thick, ->] (K)--node[fill=white, inner sep=0pt]{$^0\gamma ^0$}(H);
   \draw[thick, ->] (K1)--node[fill=white, inner sep=0pt, xshift=15pt, yshift=10pt]{$^0\gamma ^1$}(H);
    \draw[thick, ->] (K)--node[fill=white, inner sep=0pt, xshift=-15pt, yshift=10pt]{$^1\gamma ^0$}(H1);
   \draw[thick, ->] (K1)--node[fill=white, inner sep=0pt]{$^1\gamma ^1$}(H1);
   
   \node (L) at (8,0) {$5$};
  \draw[thick, ->] (H)--node[fill=white, inner sep=1pt]{$\delta^0$}(L);
   \draw[thick, ->] (H1)--node[fill=white, inner sep=1pt]{$\delta^1$}(L);
  \end{tikzpicture}}
  \]
\end{example}

It is clear from the description of quiver and relations that a skew-gentle algebra admits a ${\mathbb Z}_2$-action. This has been explored in \cite{GePe}, and we will come back to it in section \ref{section::Z2action} using a geometric description of skew-gentle algebras.
\bigskip

The fact that the quiver $Q$ defining  $\bar{A}=\bar{A}(Q,I,{\rm Sp}) $ is in general not the quiver  $\bar{Q}$ of the skew-gentle algebra  $\bar{A}$ creates some ambiguity of the data defining skew-gentle algebras: Let $Q$ be the following quiver with a special loop attached to vertex 2
 \[ \xymatrix{ 1 \ar[r]^a & 2 \ar@(ur,dr)^{e}[]
}
    \]
and consider also the quiver 
$$Q': 2^+ \longleftarrow 1 \longrightarrow 2^-$$ 
If both sets $I$ and $I'$ are empty, then the skew-gentle algebras 
$\bar{A}(Q,I,\{ e \})$ and $\bar{A}(Q',I', \emptyset)$ are isomorphic, but the quivers $Q$ and $Q'$ are not.
This example illustrates the fact that the quiver of Dynkin type $D_3$ (skew-gentle) is actually an equi-oriented quiver of type $A_3$ (which is gentle). 
We address in the following lemma the question when it is possible to express a skew-gentle algebra with non-empty set of special loops as a gentle algebra:

\begin{lemma}
Let $\Lambda$ be a connected gentle algebra, and assume $\Lambda$ can be expressed as a skew-gentle algebra $\Lambda \cong \bar{A}(Q,I,{\rm Sp})$ with ${\rm Sp} \neq \emptyset$.
Then $\bar{A}(Q,I,{\rm Sp})$  is one of the following cases or its dual:

 \[ \xymatrix{ 1 \ar@(ul,dl)_{e}[]\ar[r]^a & 2 
} \qquad \qquad
\xymatrix{ 1 \ar@(ul,dl)_{e}[] \ar[r]^a & 2 \ar@(ur,dr)^{f}[]
}
    \]
\end{lemma}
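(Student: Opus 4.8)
The plan is to work directly with the combinatorial description of the quiver $\bar Q$ and the ideal $\bar I$ of a skew-gentle algebra recalled in Subsection~\ref{subsection::quiver}, and to show that the mere presence of a special loop forces the underlying gentle quiver $Q$ to be tiny. Throughout I write $\Lambda = \bar A(Q,I,{\rm Sp})$ and fix a special vertex $i \in Q_0^{sp}$, which exists since ${\rm Sp}\neq\emptyset$; since $\Lambda$ is connected, its quiver $\bar Q$ is connected. The first step is to read off the local picture at $i$ from the gentle-pair axioms applied to the augmented pair $(Q, I\amalg\{e^2\})$. The special loop $e$ at $i$ is at once an incoming and an outgoing arrow, so the bound ``at most two arrows in and two out'' leaves room for at most one non-loop arrow into $i$ and at most one non-loop arrow out of $i$. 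Moreover, for any non-loop arrows $a\colon j\to i$ and $b\colon i\to l$, the relation $e^2\in I$ occupies one slot at each side, which forces the length-two paths $j\to i\to i$ and $i\to i\to l$ to lie \emph{outside} $I$.

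The key step is to forbid a special vertex from carrying non-loop arrows on both sides. Suppose $i$ had both a non-loop incoming arrow $a\colon j\to i$ and a non-loop outgoing arrow $b\colon i\to l$. Since the path $j\to i\to i$ already realizes the unique permitted continuation of $a$ outside $I$, the gentle axiom forces the remaining continuation $j\to i\to l$ into $I$; that is, the length-two path through $i$ is a relation. By the description of $\bar I$ at a special middle vertex, this relation becomes $b^{0}\,{}^{0}a + b^{1}\,{}^{1}a \in \bar I$ (with the evident extra super/subscripts when $j$ or $l$ is itself special), i.e.\ a genuine commutativity relation identifying the two parallel length-two paths from $j$ to $l$. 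Such a binomial minimal relation cannot occur in a gentle algebra, whose defining ideal is monomial; this contradicts the gentleness of $\Lambda$. Hence every special vertex has non-loop arrows on a single side only, and, up to replacing $\Lambda$ by its opposite algebra (the ``dual'' of the statement), I may assume $i$ has exactly one non-loop arrow, outgoing, $b\colon i\to l$. (If $i$ had no non-loop arrow at all, its two split copies $i_0,i_1$ would be isolated vertices of $\bar Q$, contradicting connectedness, so $b$ really exists.)

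Finally I would analyze the neighbour $l$, splitting into the cases $l$ ordinary and $l$ special. In $\bar Q$ the arrow $b$ doubles into the twins $b^{0}\colon i_0\to l$ and $b^{1}\colon i_1\to l$ (or into four arrows into $l_0,l_1$ when $l$ is special). If $l$ is ordinary, these twins already occupy both incoming slots at $l$, so $b$ is its only incoming arrow in $Q$; and $l$ can have no outgoing arrow $d$ either, because the relations at the ordinary middle vertex $l$ are symmetric in $b^{0}$ and $b^{1}$ (either both $d\,b^{\epsilon}\in\bar I$ or both $\notin\bar I$), and each alternative violates the gentle condition at $l$ requiring that exactly one incoming arrow compose with $d$ inside $\bar I$. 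Thus $l$ is a sink receiving precisely $b^{0},b^{1}$, and connectedness forces $Q$ to be the first displayed quiver. If instead $l$ is special, then $l$ already carries the non-loop incoming arrow $b$, so by the previous step it has no non-loop outgoing arrow, while the degree bound rules out a second non-loop incoming arrow; hence $l$ bears only its loop and the arrow $b$, and connectedness forces $Q$ to be the second displayed quiver. In either case all special vertices are accounted for, so these exhaust the possibilities.

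The main obstacle is the middle step: correctly extracting, from the gentle-pair axioms, that a special vertex with arrows on both sides forces the through-path into $I$, and then recognizing the induced relation in $\bar I$ as a true commutativity relation incompatible with gentleness. The accompanying twin-symmetry argument at an ordinary neighbour is elementary once the symmetry of $b^{0},b^{1}$ with respect to $\bar I$ is isolated, but it is the second place where the precise form of the ideal $\bar I$ must be used with care.
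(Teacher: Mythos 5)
Your proof is correct and follows essentially the same route as the paper's: first rule out a special vertex lying on a through-path (the forced binomial relation in $\bar{I}$ contradicts gentleness), then show the unique non-loop neighbour admits no further arrows, and conclude by connectedness and duality. You are in fact more explicit than the paper at its two tersest points: the paper simply asserts that a path $x\to y\to z$ through a special vertex produces an anti-commutative square (you derive from the gentle-pair axioms and the $e^2$-slot that the through-path must lie in $I$), and where you give the twin-symmetry argument at the ordinary neighbour $l$, the paper invokes the looser claim that a subquiver of type $D_n$, $n\ge 4$, precludes gentleness -- a claim which really needs your relation-level argument, since a $D_4$-shaped quiver alone can perfectly well carry a gentle structure.
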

\medskip

\begin{proof}
We assume that $\Lambda$ can be presented as a skew-gentle algebra $\Lambda \cong \bar{A}(Q,I,{\rm Sp})$ with a special loop $e$ at vertex $y$.
If $y$ lies on a path
$x \to y \to z$ in $Q$, then the quiver $\bar{Q}$ of $\bar{A}(Q,I,{\rm Sp})$ contains an anti-commutative square, and thus $\Lambda$ is not gentle. 
Therefore there is exactly one arrow in $Q$ attached to the vertex $y$, and we can assume up to duality it is $a: y \to z.$ If there is a further arrow between $z$ and some different vertex $w$ in $Q$, then the quiver $\bar{Q}$ of $\Lambda$ contains a subquiver of type $D_n$ with $n \ge 4$, thus it is not gentle. Therefore, only the cases described in the lemma are possible.
\end{proof}

The gentle pair $(Q,I)$ of a gentle algebra is well defined up to isomorphism of gentle pairs.  We generalize the proof of this fact  and show that the same holds true for skew-gentle algebras and their triples, when avoiding the cases described in the previous lemma:

\begin{proposition}\label{prop::isoalgebras-isopairs}
Let $\Lambda$ be a connected skew-gentle algebra which is not gentle. Then $\Lambda \cong \bar{A}(Q,I,{\rm Sp})$ for a unique skew-gentle triple $(Q,I,{\rm Sp})$, up to an isomorphism of quivers $Q\to Q'$ sending $I$ to $I'$ and $\rm Sp$ to ${\rm Sp}'$. 
\end{proposition}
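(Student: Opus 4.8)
The plan is to reconstruct the triple $(Q,I,{\rm Sp})$ from the abstract algebra $\Lambda$ by intrinsic means, exactly as one recovers the gentle pair of a gentle algebra, and then to argue that every step uses only data invariant under algebra isomorphism. I would begin from the Gabriel quiver $\bar{Q}=\bar{Q}_\Lambda$, which is determined up to isomorphism by $\Lambda$ alone (its vertices are the primitive idempotents up to conjugacy, its arrows read off from $\operatorname{rad}\Lambda/\operatorname{rad}^2\Lambda$). Next I would fix a minimal system of length-two relations generating an admissible ideal with $\Lambda\cong k\bar{Q}/\bar{I}$; as in the gentle case these relations are essentially canonical, the only freedom being the standard rescaling and recombination at branch vertices. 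The content of the proposition is then that the remaining datum --- which vertices of $\bar{Q}$ arise from splitting a single special vertex of $Q$ --- is recovered canonically from $(\bar{Q},\bar{I})$.

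The key step is to detect the special vertices intrinsically. Following the explicit description of $\bar{Q}$ and $\bar{I}$ in Subsection~\ref{subsection::quiver}, a special vertex $j\in Q_0^{sp}$ is split into a twin pair $\{j_0,j_1\}$, and I would characterise such pairs by their local configuration: either $j_0$ and $j_1$ are the two ends of an anti-commutative square and occur together in a binomial (two-term) minimal relation --- this happens whenever a relation of $I$ passes through $j$, or $j$ is joined to another special vertex --- or else $j_0$ and $j_1$ form a \emph{doubled} pair, receiving (or emitting) a parallel pair of twin arrows from a common neighbour with matching monomial relations on both strands. In either case I would pair $j_0$ with $j_1$ as the twin carrying the same local data, then reconstruct $Q$ by contracting each twin pair to one vertex and each doubled family of arrows to a single arrow, read off $I$ from the monomial relations at the ordinary middle vertices and from the support of the binomial relations, and declare the contracted vertices to be the elements of ${\rm Sp}$. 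Since each ingredient of this reconstruction is invariant under isomorphism of $\Lambda$, two triples presenting isomorphic algebras produce the same reconstructed data, hence an isomorphism of triples.

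The main obstacle is to show that this detection of twin pairs is genuinely well defined. The delicate configurations are precisely the \emph{collapsible} ones, in which a special loop can be reabsorbed into a gentle presentation --- for instance a leaf special vertex joined by a single arrow to an ordinary vertex, giving locally $j_0\leftarrow i\rightarrow j_1$, which is indistinguishable from two genuine leaves. This is exactly where the hypothesis that $\Lambda$ is \emph{not gentle} enters, together with the preceding Lemma: that Lemma classifies all ways a special loop can be traded for a gentle presentation and rules out these ambiguous local patterns once $\Lambda$ is non-gentle, since a non-collapsible special vertex always forces either a binomial minimal relation or a vertex of $\bar{Q}$ carrying three incident arrows, both of which are presentation-invariant obstructions to gentleness. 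I would therefore argue that under the non-gentle hypothesis every special vertex sits in a rigid, isomorphism-invariant local pattern.

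A final point to address is that the twin pairing need not be literally unique, only unique up to isomorphism. When several parallel leaf families issue from a common vertex, the local picture is symmetric and does not single out one pairing; I would check that any two admissible pairings are then related by a relabelling and hence yield isomorphic triples, so that the reconstructed $(Q,I,{\rm Sp})$ is well defined up to the isomorphisms of quivers, relations and special loops asserted in the statement.
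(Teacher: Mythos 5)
Your overall strategy coincides with the paper's: recover the pair $(\bar{Q},\bar{I})$ from $\Lambda$, then reconstruct $(Q,I,{\rm Sp})$ by detecting twin pairs through anti-commutative (binomial) relations and through symmetric doubled-arrow configurations, invoking the non-gentle hypothesis together with the preceding lemma to exclude the collapsible cases. The second half of your plan is essentially the paper's injectivity argument for the assignment $(Q,I,{\rm Sp})\mapsto(\bar{Q},\bar{I})$.

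There is, however, a genuine gap at the first step. You assert that the minimal system of length-two relations is ``essentially canonical, the only freedom being the standard rescaling and recombination at branch vertices,'' citing the gentle case. For gentle algebras the ideal is monomial and this is standard, but for a skew-gentle algebra $\bar{I}$ contains binomial relations and $\bar{Q}$ may contain parallel arrows, and precisely this canonicity is the technical heart of the paper's proof. The paper takes an isomorphism $\varphi:\bar{A}(Q,I,{\rm Sp})\to\bar{A}(Q',I',{\rm Sp}')$ and proves that the induced map on $\mathbf{r}/\mathbf{r}^2$ sends each arrow to a scalar multiple of an arrow: this is clear when there are no double arrows, but in the presence of parallel arrows one must compose with a third arrow $\gamma$ and argue by induction, and the argument genuinely fails for the Kronecker quiver (which is gentle, hence excluded by hypothesis). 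Only after this does one know that monomial relations go to multiples of monomial relations and that a binomial relation $\alpha_0\beta_0+\alpha_1\beta_1$ goes to a multiple of the corresponding binomial relation, so that your ``intrinsic'' detection of twin pairs via binomial relations is well defined. As written, your proposal assumes exactly the invariance it needs to prove; ``recombination'' of parallel arrows is precisely what could scramble the binomial shape of the relations and must be ruled out. A smaller imprecision: two adjacent special vertices do not by themselves produce a binomial relation --- in the second exceptional case of the lemma one has $\bar{I}=\emptyset$; binomial relations arise only from relations of $I$ passing through a special vertex.
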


\begin{proof}
Let $\varphi :A=A(Q,I,{\rm Sp})\to A'=A(Q',I',{\rm Sp}')$ be an isomorphism. Using $\varphi$ we construct an isomorphism $\bar{Q}\to \bar{Q}'$ sending $\bar{I}$ to $\bar{I'}$.
First $\varphi$ sends any $e_i$ with $i\in \bar{Q}_0$ to $e_{i'}$ with $i'\in \bar{Q}'_0$, so induces an isomorphism $\bar{Q}_0\to \bar{Q}'_0$.
Denote by $\mathbf{r}$ (resp. $\mathbf{r}'$ ) the radical of $A$ (resp. $A'$). Then $\varphi$ induces vector space isomorphisms $$\varphi_{i,j}^n: e_j ({\mathbf r}^n/{\mathbf r}^{n+1})e_i\to e_{j'} ({\mathbf r}'^n/{\mathbf r}'^{n+1})e_{i'} $$ which are compatible with the multiplication.
We first show that unless $\bar{Q}$ is  the Kronecker quiver (which is gentle),  $\varphi^1$ sends any arrow to a multiple of an arrow of $\bar{Q}'$.
If $Q$ has no double arrows, this is clear. Now since $A$ is finite dimensional, there are no oriented cycles of double arrows in $\bar{Q}$. If $\alpha$ and $\beta$ are parallel arrows then since the quiver is not the Kronecker quiver, there exists $\gamma$ that composes with $\alpha$ or $\beta$. If $\varphi^1(\gamma)$ is a multiple of $\gamma$, then one can check that so are $\varphi^1(\alpha)$ and $\varphi^1(\beta)$. Using this argument, one can show by induction that any arrow is sent to a multiple of an arrow by $\varphi^1$.

Now  $\varphi^1$ induces a isomorphism $\bar{Q}$ to $\bar{Q}'$, we denote by $a'\in \bar{Q}'_1$ the image of $a\in \bar{Q}_1$. Let us check that it sends $\bar{I}$ to $\bar{I}'$. 
If $\alpha\beta\in I$, then $\varphi^1(\alpha)\varphi^1(\beta)$ is in $\bar{I'}$. Since it is a multiple $\alpha'\beta'$, we have $\alpha'\beta'\in \bar{I}'$.
If $\alpha_0\beta_0+\alpha_1\beta_1$ is in $I$, then $\varphi^1(\alpha_0)\varphi^1(\beta_0)+ \varphi^1(\alpha_1)\varphi^1(\beta_1)$ is in $I'$ and is a linear combination of $\alpha'_0\beta'_0$ and $\alpha'_1\beta'_1$, therefore it must be a multiple of  $\alpha'_0\beta'_0+\alpha'_1\beta'_1$ .

We conclude using that the assignment $(Q,I,{\rm Sp})\mapsto (\bar{Q},\bar{I})$ is injective unless $(Q,I,{\rm Sp})$ is as described in the previous lemma.
In fact, the set of special loops is determined by $(\bar{Q},\bar{I})$ as follows: Every anti-commutative square in $(\bar{Q},\bar{I})$ is given by a path of length two in $Q$ passing through a vertex equipped with a special loop. 
Moreover, every subquiver of $\bar{Q}$ of the form $D_n$ with $n >4$ 
\[
  \scalebox{0.8}{
  \begin{tikzpicture}
   \node (I) at (2,0) {$v$};
   \node (K) at (4,0) {$w$};
  \node (L) at (6,0) {$x$};
  \node (J) at (0,1) {$u_0$};
  \node (J1) at (0,-1) {$u_1$};
  \draw[thick, -] (J)--node{}(I);
   \draw[thick, -] (J1)--node{}(I);
   \draw[thick, -] (I)--node{}(K);
   \draw[thick, -] (L)--node{}(K);
  \end{tikzpicture}}\]
 is necessarily obtained from $Q$ by splitting a vertex $u$ into two vertices $u_0,u_1$ by means of a special loop attached at $u$. 

\end{proof}

\subsection{Gentle algebras and dissected surfaces}

In this subsection, we recall some definitions and results from \cite{OPS} (but we mostly follow the notation in \cite{APS}).

A \emph{marked surface} $(\surf, M,P)$ is the data of  
\begin{itemize}
\item an orientable closed smooth surface $\surf$ with non empty boundary, that is a compact closed smooth surface from which some open discs are removed;
\item a finite set of marked points $M$ on the boundary, such that there is at least one marked point on each boundary component (this set corresponds to the set $M_\rpoint$ in \cite{APS});
\item a finite set $P$ of marked points in the interior of $\surf$ (which corresponds to the set $P_\rpoint$ in \cite{APS}).
\end{itemize}
The points in $M$ and $P$ are called marked points. A curve on the boundary of $\surf$ intersecting marked points only on its endpoints is called a \emph{boundary segment}.

An \emph{arc} on $(\surf, M, P)$ is a curve $\gamma:[0,1]\to \surf$ such that $\gamma_{|(0,1)}$ is injective and $\gamma(0)$ and $\gamma(1)$ are marked points. Each arc is considered up to isotopy (fixing endpoints).

\begin{definition}
A \emph{$\rpoint$-dissection} is a collection $D=\{\gamma_1,\ldots,\gamma_s\}$ of arcs cutting $\surf$ into polygons with exactly one side being a boundary segment.

Two dissected surfaces $(\surf,M,P,D)$ and $(\surf',M',P',D')$ are called diffeomorphic if there exists an orientation preserving diffeomorphism $\Phi:\surf\to \surf'$ such that $\Phi(M)=M'$, $\Phi(P)=P'$, and $\Phi(D)=D'$.
\end{definition} 
 Following \cite{OPS}, one can associate to the dissection $D$ a quiver $Q$, together with a subset of quadratic monomial relations $I$, such that the algebra $A(D) := A(Q,I)$ is a gentle algebra.
 In the next subsection, we explain this construction in detail and give illustrating examples in the more general context of skew-gentle algebras.

\begin{proposition}\cite{OPS}\label{prop::bijection-gentle-dissection}
The assignment $D \mapsto A(D)$ induces a bijection 
$$\left\{\begin{array}{cc}(\surf,M,P,D) \\ \mbox{\rm dissected surface}\end{array} \right\}_{\textstyle / \rm \; diffeo} \longleftrightarrow \quad
\left\{\begin{array}{cc}A(Q,I) \\
\mbox{\rm gentle algebra }\end{array}\right\}_{\textstyle /{\rm iso}}$$
\end{proposition}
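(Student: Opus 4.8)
The plan is to factor the statement through the intermediate combinatorial datum of the gentle pair $(Q,I)$ and to exhibit two mutually inverse constructions. First I would record that a gentle algebra determines its gentle pair up to isomorphism of pairs; this is the classical gentle analogue of Proposition \ref{prop::isoalgebras-isopairs} (the case ${\rm Sp}=\emptyset$), established by the same radical-filtration argument, so that the bijection to be proved is equivalent to
$$\{(\surf,M,P,D)\}\big/\text{diffeo} \;\longleftrightarrow\; \{(Q,I)\}\big/\text{iso}.$$
The forward map $D\mapsto (Q_D,I_D)$ is defined by reading quiver and relations off the dissection: the vertices are the arcs of $D$, an arrow $\gamma\to\gamma'$ records that $\gamma'$ follows $\gamma$ in the clockwise order of arcs around a common endpoint, and a relation records that two consecutive arrows turn inside a single polygon of $D$. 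Its well-definedness on diffeomorphism classes is immediate: an orientation-preserving diffeomorphism carries arcs to arcs, marked points to marked points and polygons to polygons, and preserves the clockwise cyclic order at each endpoint, hence induces an isomorphism of pairs.

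Second, I would construct the inverse $(Q,I)\mapsto D_{(Q,I)}$ geometrically. The gentle conditions organize the arrows into the \emph{fan} data of an oriented ribbon graph: at every vertex the admissibility of $I$ forces the arrows to split into (at most) two maximal directed paths avoiding the relations, and the complementary maximal \emph{antipaths} — sequences in which every composition lies in $I$ — encode how corners are glued. Thickening this ribbon graph and filling its faces produces a compact oriented surface; the boundary segments and the marked points $M$ are recovered from the permitted-thread gluing, while the \emph{cyclic} antipaths are filled with interior marked points of $P$. Taking $D$ to be the collection of core arcs dual to the edges yields a dissected surface, and one then checks $A(D_{(Q,I)})\cong A(Q,I)$ by matching this recipe against the reading-off procedure of the forward map.

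Finally I would verify that the two constructions are mutually inverse. Starting from $D$, the ribbon graph reconstructed from $(Q_D,I_D)$ is visibly the one already carried by $D$, so $D_{(Q_D,I_D)}$ is diffeomorphic to $D$; starting from $(Q,I)$, the composite unwinds to the definitions together with the uniqueness of the gentle pair recorded at the outset. I expect the genuine obstacle to be precisely the rigidity of the \emph{surface}, as opposed to its quiver: one must show that an oriented surface with its dissection is determined up to orientation-preserving diffeomorphism by the ribbon graph plus its face-and-puncture bookkeeping. This reduces to the classification of compact oriented surfaces once one checks that the genus, the number and marked-point counts of the boundary components, and the interior points $P$ all agree — the delicate part being the correct matching of $P$ with the cyclic antipaths and the verification that the gluing around each puncture respects orientation, which is where an Euler-characteristic computation and careful cyclic-order tracking are needed.
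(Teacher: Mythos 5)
Two preliminary remarks: the paper itself contains no proof of this proposition (it is quoted from [OPS], with the assignment $D\mapsto A(D)$ spelled out only in the skew-gentle subsection that follows), so your argument can only be measured against the strategy of the cited source; and your overall plan --- reduce to gentle pairs via the gentle case of Proposition \ref{prop::isoalgebras-isopairs}, read the pair off the dissection, reconstruct the dissected surface from the pair by thickening a ribbon graph, with cyclic antipaths accounting for the interior points $P$ --- is indeed that strategy. However, your reading-off rule for the relations is backwards. In the model used here, $\beta\alpha\in I$ precisely when the three arcs involved are \emph{consecutive around a common endpoint}; compositions of two arrows whose corners lie in a single polygon of $D$ are exactly the \emph{permitted} ones (maximal permitted threads correspond to polygons, hence to boundary segments, while maximal forbidden threads correspond to marked points). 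With your rule, a fan dissection of the disc (all arcs sharing one endpoint) would yield the hereditary linear $A_n$ algebra rather than the radical-square-zero piece $S_n$ it must produce. This is not a harmless change of convention, because your inverse construction uses the \emph{correct} dictionary (antipaths glue the fans of arcs around marked points, cyclic antipaths give points of $P$): as written, your forward and backward maps are mutually inconsistent, and their composite is not the identity.

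The second genuine gap is in the uniqueness step. A diffeomorphism of dissected surfaces must be orientation preserving and satisfy $\Phi(M)=M'$, $\Phi(P)=P'$ and $\Phi(D)=D'$; the classification of compact oriented surfaces, even after matching genus, boundary components, marked-point counts and $|P|$, only produces an abstract diffeomorphism $\surf\to\surf'$ with no control over where the dissection is sent, so it cannot finish the argument. The argument that works is cut-and-glue: cutting $\surf$ along $D$ decomposes it into polygons, each containing exactly one boundary segment and indexed by the maximal permitted threads, with side-identifications (including orientations and the cyclic gluing around each point of $P$) dictated entirely by $(Q,I)$; an isomorphism of gentle pairs therefore induces a bijection of polygons compatible with all identifications, and polygon-wise diffeomorphisms glue to the required $\Phi$. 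Once this is in place, no Euler-characteristic computation or appeal to the classification theorem is needed.
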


\bigskip

\subsection{Skew-gentle algebras and dissected surfaces}

\begin{definition}
A \emph{marked orbifold} $(\surf, M,P, X)$ is the data of  
\begin{itemize}
\item a marked surface $(\surf,M,P)$
\item a finite set  $X$ of points in the interior of $\surf$, called \emph{orbifold points}.
\end{itemize}

An \emph{arc} on $(\surf, M, P, X)$ is an arc with endpoints in $M$, $P$ or $X$.

A \emph{$\cross$-dissection} is a $\rpoint$-dissection $D$ of the marked surface $(\surf, M,P\cup X)$ such that each $\cross$ in $X$ is the endpoint of exactly one arc $j_\cross$. We call these arcs $j_\cross$ the $\cross$-arcs of $D$, and arcs with both endpoints in $M \cup P$ are referred to as $\rpoint$-arcs.

Two $\cross$-dissected orbifolds $(\surf,M,P,X,D)$ and $(\surf',M',P',X',D')$ are called diffeomorphic if there exists an orientation preserving diffeomorphism $\Phi:\surf\to \surf'$ such that $\Phi(M)=M'$, $\Phi(P)=P'$, $\Phi(X)=X'$ and $\Phi(D)=D'$.
\end{definition} 

Considering the $\cross$-dissection $D$ as a $\rpoint$-dissection of $(\surf,M,P\cup X)$, one can associate to $D$ a gentle pair $(Q,I)$. The condition for a $\rpoint$-dissection to be a $\cross$-dissection implies that there is a distinguished set of square zero loops corresponding to  the unique arcs linking a $\cross\in X$ to a $\rpoint\in M\cup P$. Hence, one  can define a skew-gentle triple $(Q,I',{\rm Sp})$ with skew-gentle algebra $\bar{A}(D):= \bar{A}(Q,I',{\rm Sp}))$ from $D$, where ${\rm Sp}$ is the set of loops attached to the $\cross$'s, and where $I':=I\setminus \{e^2, e\in {\rm Sp}\}$.

\begin{proposition}\label{prop::bijection-skewgentle-dissection}
The assignment $D \mapsto \bar{A}(D)$ maps $\cross$-dissections to skew-gentle algebras, and all skew-gentle algebras are obtained in this way. 

\end{proposition}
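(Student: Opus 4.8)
The plan is to transport the statement across the gentle correspondence of Proposition~\ref{prop::bijection-gentle-dissection}, the bridge being the observation that the $\cross$-arcs are precisely the arcs that produce square-zero loops in the associated quiver. I would treat the two assertions (well-definedness and surjectivity) separately, and in each case the real content is a local analysis of the dissection near an orbifold point.

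For the first assertion, regard the $\cross$-dissection $D$ as a $\rpoint$-dissection of the marked surface $(\surf,M,P\cup X)$; Proposition~\ref{prop::bijection-gentle-dissection} then attaches to it a gentle pair $(Q,I)$. By definition each $\cross\in X$ is incident to a single arc $j_\cross$, so in the cyclic order of arcs around $\cross$ the arc $j_\cross$ is consecutive to itself, and the recipe of \cite{OPS} records this by a loop $e_\cross\colon j_\cross\to j_\cross$ in $Q$. Since $A(Q,I)=kQ/I$ is finite dimensional and $I$ consists of paths of length two, every loop $e$ must satisfy $e^2\in I$: otherwise no relation could kill any power $e^n$, contradicting finiteness. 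Applying this to $e_\cross$ and setting ${\rm Sp}:=\{e_\cross\mid \cross\in X\}$, $I':=I\setminus\{e_\cross^2\mid \cross\in X\}$, we get $I=I'\amalg\{e^2\mid e\in{\rm Sp}\}$, so $(Q,I',{\rm Sp})$ is a skew-gentle triple and $\bar A(D)=\bar A(Q,I',{\rm Sp})$ is skew-gentle, as required.

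For surjectivity I would run the dictionary backwards. Starting from a skew-gentle algebra $\bar A(Q,I',{\rm Sp})$, form the gentle pair $(Q,I)$ with $I:=I'\amalg\{e^2\mid e\in{\rm Sp}\}$ and apply Proposition~\ref{prop::bijection-gentle-dissection} to realize $A(Q,I)$ as $A(D)$ for a dissected surface $(\surf,M,P,D)$. Each special loop $e\in{\rm Sp}$ is a loop of $Q$, hence is realized by an arc $\gamma_e\in D$ that is consecutive to itself around one of its endpoints; the converse local analysis shows this endpoint must be an interior marked point $p_e\in P$ incident to no arc other than $\gamma_e$ (a free endpoint on the boundary cannot create a loop, since the intervening boundary segment destroys self-consecutivity). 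I would then declare each such $p_e$ an orbifold point $\cross_e$: this turns $\gamma_e$ into the unique arc ending at $\cross_e$, so that $D$ becomes a $\cross$-dissection of the orbifold $(\surf,M,P\setminus\{p_e\},X)$ with $X:=\{\cross_e\}$, and by construction $\bar A(D)=\bar A(Q,I',{\rm Sp})$.

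The main obstacle is making the geometric dictionary ``square-zero loop $\leftrightarrow$ arc with a free interior endpoint'' precise, especially in the backward direction needed for surjectivity. One must show that every special loop is realized by an arc whose distinguished endpoint is genuinely interior (so that it may legitimately be promoted to an orbifold point) and carries no other arc (so that the defining condition of a $\cross$-dissection, that each $\cross$ lie on exactly one arc, holds), and that distinct special loops yield distinct such endpoints so that the promotion is well defined. The delicate configuration to rule out is an arc with both endpoints at a single interior point, which could a priori also produce a loop; I would exclude it for special loops by confronting it with the gentleness constraints on the number and composability of arrows at the loop vertex together with the finite-dimensionality forcing $e^2\in I$.
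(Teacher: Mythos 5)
Your proof is correct and takes essentially the same route as the paper: both directions are transported across Proposition~\ref{prop::bijection-gentle-dissection}, using the dictionary between square-zero loops and arcs with a unique free interior endpoint, and your gentle pair $(Q, I'\amalg\{e^2 \mid e\in{\rm Sp}\})$ is exactly the gentle degeneration $\bar{A}_0$ through which the paper phrases the surjectivity step. The local analysis you flag as the main obstacle is precisely what the paper asserts without detailed proof (``square-zero loops correspond under this bijection to self-folded triangles containing one $\rpoint$ in its interior''), so your extra care there only makes the argument more complete.
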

\begin{proof}
The gentle algebra $\bar{A}_0(Q,I,{\rm Sp})$ which is a degeneration of a given skew-gentle algebra $\bar{A}(Q,I,{\rm Sp})$ is obtained by the bijection in Proposition \ref{prop::bijection-gentle-dissection} uniquely by a $\rpoint$-dissection of a surface $(\surf,M,P)$. Square-zero loops of $\bar{A}_0(Q,I,{\rm Sp})$ correspond under this bijection to self-folded triangles containing one $\rpoint$ in its interior. Changing the $\rpoint$ to a $\cross$, one obtains a $\cross$-dissection $D$, and the choice of $\cross$'s corresponds to a selection of special loops, thus $\bar{A}(D) = \bar{A}(Q,I,{\rm Sp})$.
\end{proof}
\bigskip

We now describe in detail the generalized version of the assignment $D \mapsto A(D)$ from \cite{OPS}. Let $D$ be a $\cross$-dissection of a surface $(\surf,M,P)$. Then the quiver $\bar{Q}$ of the algebra $\bar{A}(D)$  and its set of relations $\bar{I}$ such that $\bar{A}(D)= k\bar{Q}/\bar{I}$ can be constructed as follows: 
\begin{itemize}
\item
The vertices of $\bar{Q}$ are in bijection with 
$$\{ i\ \rpoint {\textrm -arc\}}\cup (\{j\ \cross{\textrm -arc}\}\times \mathbb Z_2).$$
\item
Given $i$ and $j$ $\rpoint$-arcs in $D$, there is one arrow 
 \[
  \scalebox{0.8}{
  \begin{tikzpicture}
  \node (I) at (0,0) {$i$};
  \node (J) at (2,0) {$j$};
  \draw[thick, ->] (I)--node[fill=white, inner sep=0pt]{$\alpha$}(J);
  \end{tikzpicture}}\] in $\bar{Q}$ whenever the arcs $i$ and $j$ have a common endpoint $\rpoint$ and when $i$ is immediately followed by the arc $j$ in the counterclockwise order around $\rpoint$;
\item Given a $\rpoint$-arc $i$ and a $\cross$-arc $j$ in $D$, there are two arrows 
 \[
  \scalebox{0.8}{
  \begin{tikzpicture}
  \node (I) at (0,0) {$i$};
  \node (J) at (2,1) {$j_0$};
  \node (J1) at (2,-1) {$j_1$};
  \draw[thick, ->] (I)--node[fill=white, inner sep=0pt]{$^0\alpha$}(J);
   \draw[thick, ->] (I)--node[fill=white, inner sep=0pt]{$^1\alpha$}(J1);
  
  \begin{scope}[xshift=5cm]
  \node at (-1,0) {( resp.};
   \node (I) at (2,0) {$i$};
  \node (J) at (0,1) {$j_0$};
  \node (J1) at (0,-1) {$j_1$};
  \node at (2.5,0) {)};
  \draw[thick, ->] (J)--node[fill=white, inner sep=0pt]{$\alpha^0$}(I);
   \draw[thick, ->] (J1)--node[fill=white, inner sep=0pt]{$\alpha^1$}(I);
   
  \end{scope}
  \end{tikzpicture}}\] in $\bar{Q}$ whenever the arcs $i$ and $j$ have a common endpoint $\rpoint$ and when $i$ is immediately followed by the arc $j$ in the counterclockwise (resp. clockwise) order around $\rpoint$ ;
\item Given $i$ and $j$ $\cross$-arcs in $D$, there are four arrows 
\[
  \scalebox{0.8}{
  \begin{tikzpicture}[scale=1, >=stealth]
  \node (I) at (0,1) {$i_0$};
  \node (I1) at (0,-1) {$i_1$};
  \node (J) at (3,1) {$j_0$};
  \node (J1) at (3,-1) {$j_1$};
  \draw[thick, ->] (I)--node[fill=white, inner sep=0pt]{$^0\alpha ^0$}(J);
   \draw[thick, ->] (I1)--node[fill=white, inner sep=0pt, xshift=15pt, yshift=10pt]{$^0\alpha ^1$}(J);
    \draw[thick, ->] (I)--node[fill=white, inner sep=0pt, xshift=-15pt, yshift=10pt]{$^1\alpha ^0$}(J1);
   \draw[thick, ->] (I1)--node[fill=white, inner sep=0pt]{$^1\alpha ^1$}(J1);
   
   \end{tikzpicture}}\]

in $\bar{Q}$ whenever the arcs $i$ and $j$ have a common endpoint $\rpoint$ and when $i$ is immediately followed by the arc $j$ in the counterclockwise order around $\rpoint$.

\end{itemize}

The set of relations $\bar{I}$ can be described as follows: If $i$, $j$, and $k$ have a common endpoint $\rpoint$, and are consecutive arcs following the counterclockwise  order around $\rpoint$, then 

\begin{itemize}
\item if $j$ is a $\rpoint$-arc, we have  $^{\epsilon}\beta\alpha^{\epsilon'}\in \bar{I}$, for each $\epsilon=0,1,\emptyset$ and $\epsilon'=0,1,\emptyset$, when the expression makes sense . 
\item if $j$ is a $\cross$-arc, then we have $(^{\epsilon}\beta^0) (^0\alpha^{\epsilon'})+(^{\epsilon}\beta^1)( ^1\alpha^{\epsilon'})\in \bar{I}$, where $\epsilon=0,1,\emptyset$ and $\epsilon'=0,1,\emptyset$, when the expression makes sense.
\end{itemize}

\begin{example}\label{example::Dn}
Consider a disc with one orbifold point and $n-1$ marked points on the boundary with the following $\cross$-dissection depicted for $n=5$:
\[
  \scalebox{0.8}{
  \begin{tikzpicture}[scale=0.6, >=stealth]
  
 \draw (0,0) circle (4);
 
 \node at (0,0) {$\cross$};
 \node at (4,0) {$\rpoint$};

 \node at (0,4) {$\rpoint$};

 \node at (-4,0) {$\rpoint$};

 \node at (0,-4) {$\rpoint$};

 \draw[red, thick] (0,0)--node [black,fill=white, inner sep=3pt]{$1$} (0,4);
 \draw[red,thick] (0,4)..node [black,fill=white, inner sep=2pt]{$2$} controls (0,3.5) and (-3,0).. (-4,0)..node [black,fill=white, inner sep=2pt]{$3$}controls (-3,0)  and (0,-3)..(0,-4).. node [black,fill=white, inner sep=2pt]{$4$}controls (0,-3)  and (3,0).. (4,0);
  
  \end{tikzpicture}}\]
  
  The corresponding skew-gentle algebra is a quiver of type $D_n$ as follows:
 \[
  \scalebox{0.8}{
  \begin{tikzpicture}[scale=0.8, >=stealth]
  \node (10) at (0,1) {$1_0$}; 
 \node (11) at (0,-1) {$1_1$}; 
 \node (2) at (1,0) {$2$};
  \node (3) at (3,0) {$3$};
   \node (4) at (5,0) {$4$};
  
\draw[thick, ->] (10)--(2);
\draw[thick, ->] (11)--(2);
\draw[thick, ->] (2)--(3);
\draw[thick, ->] (3)--(4);

\end{tikzpicture}}\]

Now consider the disc with $n-2$ marked points on the boundary and $2$ orbifold points with a  dissection of the following form:
 
\[
  \scalebox{0.8}{
  \begin{tikzpicture}[scale=0.6, >=stealth]
  
 \draw (0,0) circle (4);
 
 \node at (0,1) {$\cross$};
 \node at (1,0) {$\cross$};
 \node at (4,0) {$\rpoint$};
 
 \node at (0,4) {$\rpoint$};

 \node at (-4,0) {$\rpoint$};

 \node at (0,-4) {$\rpoint$};

 \draw[red, thick] (0,1)--node [black,fill=white, inner sep=3pt]{$1$} (0,4);
 \draw[red, thick] (1,0)--node [black,fill=white, inner sep=2pt]{$5$} (4,0);
 \draw[red,thick] (0,4)..node [black,fill=white, inner sep=2pt]{$2$} controls (0,3.5) and (-3,0).. (-4,0)..node [black,fill=white, inner sep=2pt]{$3$}controls (-3,0) and (0,-3)..(0,-4).. node [black,fill=white, inner sep=2pt]{$4$}controls (0,-3)  and (3,0).. (4,0);

  \end{tikzpicture}}\]    
  
  Then the corresponding skew-gentle algebra is of type $\widetilde{D}_n$:
  
 \[
  \scalebox{0.8}{
  \begin{tikzpicture}[scale=0.8, >=stealth]
  \node (10) at (0,1) {$1_0$}; 
 \node (11) at (0,-1) {$1_1$}; 
 \node (2) at (1,0) {$2$};
  \node (3) at (3,0) {$3$};
   \node (4) at (5,0) {$4$};
   
       \node (80) at (6,1) {$5_0$};
       \node (81) at (6,-1) {$5_1$};
       
\draw[thick, ->] (10)--(2);
\draw[thick, ->] (11)--(2);
\draw[thick, ->] (2)--(3);
\draw[thick, ->] (3)--(4);
\draw[thick, ->] (4)--(80);
\draw[thick,->] (4)--(81);
\end{tikzpicture}}\]

\end{example}
\section{Skew-gentle as skew-group algebras, and $\mathbb Z_2$-action on a surface}\label{section::Z2action}

From now on, and in the rest of the paper, $G$ will be the group $\mathbb Z_2$. 

\subsection{$\mathbb Z_2$-action on dissected surfaces}

Let $(\surf,M,P)$ be a marked surface, and let $\sigma:\surf\to \surf$ be a diffeomorphism of order $2$, preserving setwise $P$ and $M$, and having finitely many fixed points which are all in $\surf \backslash P$.
We call these data a $G$-marked surface.

This induces a free action of the group $G = \{1, \sigma\}$ on the sets $M$ and $P$. We denote by $X$ the set of fixed points of $\sigma$ and we define a $G$-dissection $D$ to be a $\rpoint$-dissection of $(\surf,M,P)$ which is fixed (globally) by $\sigma$. 
We also refer to $(\surf,M,P,\sigma, D)$ as a $G$-dissected surface.

Two $G$-dissected surfaces $(\surf,M,P,\sigma,D)$ and $(\surf',M',P',\sigma',D')$ are called $G$-diffeomorphic if there exists an orientation preserving diffeomorphism $\Phi:\surf\to \surf'$ preserving the marked points, sending $D$ to $D'$, and such that $\Phi\circ\sigma=\sigma'\circ \Phi$.

From a $G$-dissection $D$, we obtain a gentle algebra $A(D)$ given by a gentle pair $(Q,I)$, and the diffeomorphism $\sigma$ induces a $G$-action on $Q$, fixing globally the paths of $I$. Therefore we get the following result:

\begin{proposition}\label{prop::bijection-G-gentle-dissection}
The assignment $D \mapsto A(D)$ induces an injective map
$$\left\{\begin{array}{cc}(\surf,M,P,\sigma,D) \\ \mbox{G\rm -dissected surface}\end{array} \!\right\}_{\textstyle / \; G\rm -diffeo} \longrightarrow \quad
\left\{\begin{array}{cc}A(Q,I) \\
\mbox{\rm $G$-gentle algebra }\end{array}\!\!\!\right\}_{\textstyle /G-{\rm iso}}$$

Moreover for each $G$-gentle algebra obtained above, the action of $G$ comes from an action on the quiver which is free on the arrows.
\end{proposition}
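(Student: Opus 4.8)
The plan is to establish three assertions in turn: that the assignment $D \mapsto A(D)$ is well defined on $G$-diffeomorphism classes, that it lands in the stated target with the claimed freeness property, and that it is injective. The underlying non-equivariant bijection of Proposition \ref{prop::bijection-gentle-dissection} does most of the work, so the task is essentially to track the $G$-structure through that bijection.

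\emph{Well-definedness and the $G$-action.} First I would recall how $\sigma$ acts on the combinatorial data $(Q,I)$. Since $\sigma$ is a diffeomorphism of order $2$ sending the dissection $D$ to itself, it permutes the arcs of $D$, hence permutes the vertices of $Q$ (which are in bijection with the arcs), and it sends any consecutive pair of arcs around a marked point $\rpoint$ to a consecutive pair around $\sigma(\rpoint)$; because $\sigma$ is orientation preserving it respects the counterclockwise order, so it carries arrows to arrows and relations in $I$ to relations in $I$. This gives an automorphism of $A(Q,I)$ of order dividing $2$, i.e.\ a $G$-algebra structure on $A(D)$. If $\Phi$ is a $G$-diffeomorphism $(\surf,M,P,\sigma,D)\to(\surf',M',P',\sigma',D')$, then $\Phi$ induces an isomorphism of gentle pairs $(Q,I)\to(Q',I')$ by Proposition \ref{prop::bijection-gentle-dissection}, and the condition $\Phi\circ\sigma=\sigma'\circ\Phi$ translates precisely into the statement that this isomorphism intertwines the two $G$-actions, i.e.\ is a $G$-isomorphism. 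Hence the map on equivalence classes is well defined.

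\emph{Freeness of the $G$-action on arrows.} Here I would use the hypothesis that every fixed point of $\sigma$ lies in $X \subseteq \surf\backslash P$ and that $\sigma$ acts \emph{freely} on $M$ and $P$. The key observation is that the fixed points of $\sigma$ on the surface are exactly the orbifold points $X$, and each such point is the endpoint of a $\cross$-arc $j_\cross$; the action on $M\cup P$ being free means no $\rpoint$ is fixed. Since every arrow of $Q$ arises from two consecutive arcs sharing an endpoint $\rpoint\in M\cup P$, and $\sigma$ moves that $\rpoint$ to a different point $\sigma(\rpoint)\neq\rpoint$, the arrow cannot be fixed by $\sigma$: the source or target configuration is genuinely moved. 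This is the step I expect to be the main obstacle, because one must rule out the degenerate possibility that an arrow and its image coincide even though the shared vertex moves — this requires checking that the arc data determining an arrow (not merely the isotopy class of a single arc) is moved, and care is needed around arcs whose two endpoints are swapped by $\sigma$. The cleanest argument is that a fixed arrow would force a fixed $\rpoint$, contradicting freeness on $M\cup P$.

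\emph{Injectivity.} Finally, to see that the map is injective on $G$-diffeomorphism classes, suppose $A(D)$ and $A(D')$ are $G$-isomorphic gentle algebras. Forgetting the $G$-structure, Proposition \ref{prop::bijection-gentle-dissection} yields an orientation preserving diffeomorphism $\Phi:\surf\to\surf'$ with $\Phi(D)=D'$ realizing the underlying algebra isomorphism. It remains to upgrade $\Phi$ to a $G$-diffeomorphism, namely to arrange $\Phi\circ\sigma=\sigma'\circ\Phi$. Because the algebra isomorphism was assumed to commute with the $G$-actions, the two diffeomorphisms $\Phi\circ\sigma$ and $\sigma'\circ\Phi$ induce the \emph{same} isomorphism of gentle pairs $(Q,I)\to(Q',I')$; invoking the uniqueness part of the bijection in Proposition \ref{prop::bijection-gentle-dissection} (two diffeomorphisms inducing the same combinatorial isomorphism are isotopic through diffeomorphisms fixing the data), we conclude $\Phi\circ\sigma$ and $\sigma'\circ\Phi$ are isotopic, and one can then choose $\Phi$ within its isotopy class so that the equality holds exactly. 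This gives a $G$-diffeomorphism between the two $G$-dissected surfaces, proving injectivity.
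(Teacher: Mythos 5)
Your first two steps are correct, and they are essentially the content the paper has in mind (note that the paper states this proposition with no written proof, presenting it as an immediate consequence of the observation that $\sigma$ permutes the arcs and angles of $D$ and hence acts on the gentle pair $(Q,I)$). In particular your freeness argument is the right one: an arrow of $Q(D)$ \emph{is}, by construction, an angle at a marked point of $M\cup P$, distinct angles give distinct arrows, and the $G$-action on $M\cup P$ is free, so a fixed arrow would force a fixed marked point, which is impossible. The point you flagged as the main obstacle is therefore not one.

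The genuine gap is in the injectivity argument, at its last step. Granting that a $G$-isomorphism $A(D)\cong A(D')$ produces a diffeomorphism $\Phi$ with $\Phi(D)=D'$ such that $\Phi\circ\sigma$ and $\sigma'\circ\Phi$ induce the same isomorphism of dissections (this already uses refinements of Proposition \ref{prop::bijection-gentle-dissection} beyond its statement, namely that every isomorphism of gentle pairs is realized by a diffeomorphism, and the Alexander-method rigidity that two diffeomorphisms inducing the same combinatorial isomorphism are isotopic --- the cited proposition is only a bijection of equivalence classes), you conclude that the involutions $\tau:=\Phi\circ\sigma\circ\Phi^{-1}$ and $\sigma'$ of $(\surf',D')$ are isotopic, and then assert that ``one can choose $\Phi$ within its isotopy class so that the equality holds exactly.'' This does not follow: writing the new diffeomorphism as $h\circ\Phi$ with $h$ isotopic to the identity, strict equivariance amounts to $h\tau h^{-1}=\sigma'$, i.e.\ precisely the statement that two \emph{isotopic} involutions of a surface are \emph{conjugate} by a diffeomorphism isotopic to the identity. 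That is a nontrivial rigidity theorem for finite-order diffeomorphisms, not a formal consequence of isotopy, and invoking ``choice within the isotopy class'' is circular. In the present setting it can be proved directly, but it requires an argument: first correct along the one-skeleton, using that $\tau$ and $\sigma'$ send each arc of $D'$ and each boundary segment to the same image and agree on endpoints, so their discrepancies are diffeomorphisms of intervals rel endpoints; then use that both involutions act \emph{freely} on the set of polygons cut out by $D'$ (this freeness is exactly what the paper establishes in the proposition following this one, on the quotient orbifold) to define the correction orbit by orbit on polygon interiors via the Alexander trick. Alternatively, one can pass to the quotient $\cross$-dissected orbifold and use the paper's subsequent equivalence between $G$-diffeomorphism of $G$-dissections and diffeomorphism of their quotients. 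Either way, this step needs a proof or a citation, and your proposal contains neither.
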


Given a diffeomorphism $\sigma:\surf\to \surf$ of order two, the quotient $\orbifold:=\surf / \sigma$ has a structure of orbifold surface, with orbifold points $X$. Denote by $p:\surf\to \orbifold$ the quotient map. We may consider $(\orbifold,\overline{M},\overline{P},X)$ as a marked orbifold.

\begin{proposition}
Let $(\surf,M,P,\sigma)$ be a $G$-marked surface. Then the projection $p:\surf\to \orbifold$ induces a bijection 
$$\{ D,\ G{\rm -dissection} (\surf,M,P,\sigma)\}\longleftrightarrow \{ \overline{D}, \cross{\rm-dissection} (\orbifold,\overline{M},\overline{P},X)\}.$$

Two $G$-dissections  are $G$-diffeomorphic if and only if the corresponding $\cross$-dissections are diffeomorphic.

\end{proposition}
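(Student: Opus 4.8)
The plan is to exhibit the bijection between $G$-dissections of $(\surf,M,P,\sigma)$ and $\cross$-dissections of the orbifold $\orbifold = \surf/\sigma$ directly through the quotient map $p:\surf\to\orbifold$, and then to verify that it is equivariant with respect to (respectively, descends to) the two notions of diffeomorphism. First I would analyze the local behaviour of $p$ near a fixed point $x\in X$: since $\sigma$ has order two and acts with finitely many isolated fixed points, in suitable local coordinates $\sigma$ acts as a rotation by $\pi$, so $p$ restricts to the standard double branched cover $z\mapsto z^2$ near each such point, and $p(x)$ becomes an orbifold point of order two in $\orbifold$. Away from $X$, the map $p$ is an honest local diffeomorphism, and a free $\sigma$-orbit $\{q,\sigma(q)\}$ of marked points in $M\cup P$ maps bijectively to a single marked point $p(q)$ in $\overline{M}\cup\overline{P}$.

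Next I would describe the two directions of the correspondence. Given a $G$-dissection $D$, i.e.\ a $\rpoint$-dissection fixed setwise by $\sigma$, I push it forward to $\overline{D} := p(D)$. Here the key point is that because $D$ is $\sigma$-invariant, each arc of $D$ either is sent by $\sigma$ to another (distinct) arc, in which case the pair descends to a single $\rpoint$-arc of $\overline{D}$, or is itself $\sigma$-invariant. An arc $\gamma$ with $\sigma(\gamma)=\gamma$ (as a set, up to isotopy) must pass through a fixed point; since the fixed points are isolated and lie in $\surf\setminus P$, and since the two halves of $\gamma$ are exchanged by $\sigma$, the image $p(\gamma)$ is an arc in $\orbifold$ with exactly one endpoint at the orbifold point $p(x)\in X$ --- precisely a $\cross$-arc. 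One checks that the resulting collection $\overline{D}$ cuts $\orbifold$ into polygons each with one boundary segment, and that every orbifold point is the endpoint of exactly one arc, so $\overline{D}$ is a $\cross$-dissection. Conversely, given a $\cross$-dissection $\overline{D}$ of $\orbifold$, I pull it back by $p$: each $\rpoint$-arc lifts to a $\sigma$-invariant pair of arcs upstairs, while each $\cross$-arc, having one endpoint at an orbifold point where $p$ is branched, lifts to a single $\sigma$-invariant arc through the corresponding fixed point. This produces a $\sigma$-invariant $\rpoint$-dissection $D$ of $(\surf,M,P)$, and the two constructions are manifestly mutually inverse, which establishes the first bijection.

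Finally I would treat the statement about diffeomorphisms. If $\Phi:\surf\to\surf'$ is a $G$-diffeomorphism, i.e.\ an orientation-preserving diffeomorphism preserving marked points with $\Phi\circ\sigma = \sigma'\circ\Phi$ and $\Phi(D)=D'$, then $\Phi$ commutes with the two $G$-actions and hence descends along the quotient maps $p,p'$ to a well-defined orientation-preserving diffeomorphism $\overline{\Phi}:\orbifold\to\orbifold'$. Equivariance guarantees $\overline{\Phi}$ sends orbifold points to orbifold points and $\overline{\Phi}(\overline{D})=\overline{D'}$, so it is a diffeomorphism of $\cross$-dissected orbifolds. In the reverse direction, given a diffeomorphism $\overline{\Phi}$ of the orbifolds respecting the $\cross$-dissections, I lift it to the double covers: because $p$ and $p'$ are the branched double covers determined by $\sigma$ and $\sigma'$, a diffeomorphism downstairs matching the branch loci (the orbifold points) lifts, and the two lifts differ by the deck transformation $\sigma$, so one of them is the desired $G$-diffeomorphism $\Phi$ with $\Phi\circ\sigma=\sigma'\circ\Phi$.

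The main obstacle I anticipate is not the combinatorial matching of arcs but the care needed in the lifting step for diffeomorphisms: I must argue that an orbifold diffeomorphism sending orbifold points to orbifold points genuinely lifts to the branched double cover and that the lift can be chosen orientation-preserving and compatible with the marked-point data, including the isotopy-class bookkeeping at the branch points. This requires invoking the classical lifting criterion for branched covers together with the fact that the covers $p,p'$ are classified (as $\Z_2$-covers of $\orbifold\setminus X$) by the same cohomological data once the branch loci are identified; I would phrase this as: the restriction of $p$ to $\surf\setminus X \to \orbifold\setminus X$ is a connected double cover, and $\overline\Phi$ preserves the corresponding index-two subgroup of $\pi_1(\orbifold\setminus X)$, which yields the lift. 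Verifying this compatibility of the covering data, rather than the elementary push-forward/pull-back of arcs, is where the real content lies.
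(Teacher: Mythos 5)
Your forward direction hides its real difficulty inside the phrase ``one checks \dots\ that every orbifold point is the endpoint of exactly one arc.'' Nothing you have established implies this: you show that a $\sigma$-invariant arc must pass through a fixed point, but not the converse, namely that every fixed point of $\sigma$ lies on an arc of $D$. A priori a fixed point $x$ could sit in the \emph{interior} of one of the polygons cut out by $D$, in which case $p(D)$ would have an orbifold point incident to no arc and would fail to be a $\cross$-dissection. Ruling this out is precisely where the paper's proof does its work: if $x$ were interior to a polygon, then $\sigma$ (a local central symmetry at $x$) would fix that polygon setwise, and since each polygon of a $\rpoint$-dissection has exactly one side on $\partial\surf$, $\sigma$ would fix that boundary segment setwise and hence have a fixed point on the boundary --- a contradiction. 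One also needs that each invariant arc contains exactly one fixed point (an involution of an interval fixing two interior points is the identity, contradicting finiteness of $X$), and that $\sigma$ acts freely on the polygons, so that the complementary regions downstairs are again polygons with exactly one boundary segment. You explicitly declare that ``the real content'' lies elsewhere than in ``the combinatorial matching of arcs,'' but these omitted steps are the crux of the proposition.

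Your treatment of the lifting step rests on a false premise. You claim the covers $p,p'$ ``are classified by the same cohomological data once the branch loci are identified,'' so that any orbifold diffeomorphism matching $X$ with $X'$ (and the dissections) automatically preserves the relevant index-two subgroup of $\pi_1(\orbifold\setminus X)$. This is not so: a double cover branched at $X$ corresponds to a class in $H^1(\orbifold\setminus X;\Z_2)$ that is \emph{not} determined by the branch locus, nor even by the $\cross$-dissection. The paper's own Examples \ref{example1} and \ref{example::cylinder} (see also Figure \ref{figure::example}) exhibit the very same $\cross$-dissected cylinder with two orbifold points as the quotient of $G$-dissections living on two non-homeomorphic covers, a sphere with four holes and a torus with two holes; these correspond to distinct covering classes with identical branch loci. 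Consequently a diffeomorphism downstairs matching branch points and dissections need not pull back one covering class to the other, and your lifting argument collapses exactly at the point you identified as the heart of the matter. (Note that the paper's own proof establishes only the bijection of dissections and is silent on the diffeomorphism statement, so this step cannot be repaired by citing it; any correct argument must address the compatibility of covering classes head-on, for instance by restricting attention to dissections pulled back along one fixed cover.)
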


\begin{proof}
Let $D$ be a $G$-dissection of $(\surf,M,P,\sigma)$. We first show that a fixed point $x$ of $\sigma$ cannot be in the interior of one of the polygons cut out by $D$. Indeed, the diffeomorphism $\sigma$ acts locally around $x$ as a central symmetry, so it would fix globally the polygon containing $x$. But this polygon has exactly one side which is on the boundary of $\surf$, thus $\sigma$ would fix globally this side, and $\sigma$ would have a fixed point on the boundary, a contradiction.

Therefore every fixed point of $\sigma$ lies in the interior of an arc of $D$.  If $\gamma$ is an arc in $D$ containing two distinct fixed points, then $\gamma$ would fix a point in between, this would contradict the fact that $X$ is finite by an easy induction . Finally, if $\gamma$ contains $x\in X$, then $\gamma$ is fixed by $\sigma$ since $\gamma$ does not intersect another arc of $D$. Again arguing by finiteness of $X$, we cannot have $\sigma(\gamma)=\gamma$, hence we conclude $\sigma(\gamma)=\gamma^{-1}$. We have therefore shown that the $G$-dissection $D$ has exactly $m = |X|$ arcs $\gamma$ such that $\sigma(\gamma)=\gamma^{-1}$ and each of them contains exactly one point in $X$. Setting $X= \{x_1, \ldots ,x_m\}$, we can write $$D=\{\gamma_1,\ldots,\gamma_m,\alpha_1,\ldots,\alpha_s, \sigma(\alpha_1),\ldots,\sigma(\alpha_s)\}$$
with $\sigma(\gamma_i)=\gamma^{-1}_i$, and we can assume $x_i= \gamma_i(\frac{1}{2})$. 
Cutting the self-symmetric arcs into two parts at the fixed point, we write $\gamma_i=\gamma_i^0.\gamma_i^1$ where $\gamma_i^0(0)=x_i$. Then the set of arcs
$$\{\gamma_1^0, \gamma_1^1,\ldots,\gamma_m^0,\gamma_m^1,\alpha_1,\ldots,\alpha_s, \sigma(\alpha_1),\ldots,\sigma(\alpha_s)\}$$ is a dissection of $(\surf\setminus X,M,P\cup X)$ for which every $x_i\in X$ belongs exactly to the two arcs $\gamma_i^0$ and $\gamma_i^1$. 
Therefore the collection $$\overline{D}=\{p(\gamma_1^0),\ldots, p(\gamma^0_m),p(\alpha_1),\ldots, p(\alpha_s)\}$$ is a system of non-intersecting arcs.
The action of $\sigma$ on the polygons cut out by $D$ is free, indeed if one polygon were fixed, then $\sigma$ would have a fixed point in its interior.
Since the projection $\surf\setminus X\to \overline{\surf}\setminus X$ is a two folded cover without branched points, the collection $\overline{D}$ cuts the surface $\overline{\surf}$ into polygons with exactly one boundary segment on the boundary. Therefore $\overline{D}$ is a $\cross$-dissection of $\overline{\surf}$.

\medskip

Conversely, let 
$\overline{D}=\{\bar{\gamma}_1, \ldots, \bar{\gamma}_m, \bar{\alpha}_1, \ldots, \bar{\alpha}_s\}$ be a $\cross$-dissection of $(\overline{\surf},\overline{M}, \overline{P},X)$ where the $\bar{\gamma}_i$ are the arcs incident to a point in $X$. Then $p^{-1}(\bar{\alpha}_i)$ is a union of two arcs that do not intersect and that are mapped under $\sigma$ onto each other, thus we can write 
$p^{-1}(\bar{\alpha}_i)=\{\alpha_i, \sigma(\alpha_i)\}.$

The preimage $p^{-1}(\bar{\gamma}_i)$ is a union of two curves that both have $x_i\in X$ as endpoint. So if we write 
$p^{-1}(\bar{\gamma}_i)=\{\gamma_i, \sigma(\gamma_i)\},$
we have that $\widetilde{\gamma}_i:=\gamma_i.\sigma(\gamma_i)^{-1}$ is an arc of $(\surf, M,P)$. It is then easy to see that 
$$D:=\{\widetilde{\gamma}_1,\ldots,\widetilde{\gamma}_m,\alpha_1,\ldots,\alpha_s, \sigma(\alpha_1),\ldots,\sigma(\alpha_s)\}$$ is a dissection of $(\surf, M,P)$ which is invariant under $\sigma$.

\end{proof}

\begin{proposition}\label{prop::iso AG-Abar} Let $(\surf,M,P,\sigma,D)$ be a $G$-dissected marked surface. There is an isomorphism of algebras 
$$ (A(D)G)_{\rm b}\simeq \bar{A}(\overline{D}),$$
where $(A(D)G)_{\rm b}$ is the basic algebra of the skew-group algebra $A(D)G$.
\end{proposition}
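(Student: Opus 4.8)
The plan is to compute the basic algebra of $A(D)G$ explicitly by means of idempotents, and to match its quiver with relations against the pair $(\bar Q,\bar I)$ describing $\bar A(\bar D)$. Write $A=A(D)=kQ/I$; by Proposition \ref{prop::bijection-G-gentle-dissection} the vertices of $Q$ are the arcs of $D$, the group $G$ acts on $Q$ through $\sigma$ freely on the arrows, and $I$ consists of the monomial relations $\beta\alpha$ attached to consecutive arcs around the marked points. By the preceding proposition the arcs split into the self-symmetric arcs $\widetilde\gamma_1,\dots,\widetilde\gamma_m$, each fixed by $\sigma$ and carrying one orbifold point $x_i$, together with the free pairs $\{\alpha_j,\sigma(\alpha_j)\}$; under the projection $p$ these correspond respectively to the $\cross$-arcs $\bar\gamma_i$ and the $\rpoint$-arcs $\bar\alpha_j$ of $\bar D$. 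This combinatorial dictionary is what I will use to identify the computed quiver with $\bar Q$.

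Since $\mathrm{char}(k)\neq 2$, the group algebra $kG$ contains the orthogonal idempotents $\epsilon_\pm=\tfrac12(1\pm\sigma)$ with $\epsilon_++\epsilon_-=1$. First I would produce a complete set of primitive orthogonal idempotents of $A(D)G$. For a fixed vertex $v=\widetilde\gamma_i$ one has $\sigma(e_v)=e_v$, so $e_v\otimes 1$ decomposes as $f_v^++f_v^-$ with $f_v^{\pm}=e_v\otimes\epsilon_\pm$; a direct check shows these are orthogonal idempotents, and I would argue that each is primitive. For a free pair $\{w,\sigma(w)\}=\{\alpha_j,\sigma(\alpha_j)\}$ the element $1\otimes\sigma$ is invertible in $A(D)G$ and conjugates $e_w\otimes 1$ to $e_{\sigma(w)}\otimes 1$, so these two idempotents are isomorphic and contribute a single vertex to the basic algebra, represented by $e_w\otimes 1$. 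This produces a bijection between the vertices of the basic algebra of $A(D)G$ and the set $\{i\ \rpoint\text{-arc}\}\cup(\{j\ \cross\text{-arc}\}\times\Z_2)$, that is, the vertex set $\bar Q_0$.

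Next I would determine the arrows by computing $f'\,(\mathrm{rad}\,A(D)G/\mathrm{rad}^2 A(D)G)\,f$ between the chosen idempotents, distinguishing four cases according to whether the source and target arcs are free or fixed. An arrow $a$ of $Q$ together with its image $\sigma(a)$ spans a $G$-stable subspace of $\mathrm{rad}\,A/\mathrm{rad}^2 A$; projecting by $\epsilon_\pm$ and by the vertex idempotents yields a single arrow between two $\rpoint$-vertices in the free--free case, the two arrows $f_j^\pm(a\otimes 1)e_i$ in the free--fixed case (and dually in the fixed--free case), and four arrows in the fixed--fixed case, reproducing exactly the arrows $\alpha$, ${}^\epsilon\alpha$, $\alpha^\epsilon$ and ${}^\epsilon\alpha^{\epsilon'}$ of $\bar Q$. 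For the relations I would insert the decomposition $e_j\otimes 1=f_j^++f_j^-$ into a gentle relation $\beta\alpha=0$ of $A$ sitting at a middle arc $j$: when $j$ is free the relation remains a single monomial, giving ${}^\epsilon\beta\alpha^{\epsilon'}\in\bar I$, whereas when $j$ is a fixed $\cross$-arc the identity $0=\beta\alpha=\beta f_j^+\alpha+\beta f_j^-\alpha$ expresses the vanishing as a sum of the two length-two paths through $j_0$ and $j_1$, producing the binomial relations $({}^\epsilon\beta^0)({}^0\alpha^{\epsilon'})+({}^\epsilon\beta^1)({}^1\alpha^{\epsilon'})\in\bar I$. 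Because the preceding proposition matches the cyclic adjacencies of arcs around the marked points of $\surf$ with those around the corresponding points of $\orbifold$, the quiver with relations so obtained is precisely $(\bar Q,\bar I)$, whence $(A(D)G)_{\mathrm b}\cong\bar A(\bar D)$.

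The main obstacle I expect is the fixed--fixed case together with the binomial relations: one must check carefully that the $\pm$-eigenspace decomposition of the span $\langle a,\sigma(a)\rangle$ interacts with composition so as to produce precisely four arrows and the anti-commutative relations, rather than two monomial ones, and that no unexpected arrows or relations surface in $\mathrm{rad}/\mathrm{rad}^2$. Establishing primitivity of the split idempotents $f_v^{\pm}$, and verifying that the chosen idempotents genuinely realize the basic algebra (no further isomorphisms among the $f_v^{\pm}$ and the $e_{\alpha_j}\otimes 1$), is the other point that will require care.
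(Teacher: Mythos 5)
Your proposal is correct and takes essentially the same approach as the paper's proof: the paper likewise compresses $A(D)G$ by the idempotent built from $e_j\otimes\frac{1\pm\sigma}{2}$ at the $\sigma$-fixed arcs together with one representative $e_{i^+}\otimes 1$ per free $\sigma$-orbit of arcs, invokes Reiten--Riedtmann for basicness of the compression, and then matches the resulting quiver and monomial/binomial relations with $(\bar{Q},\bar{I})$. The only difference is organizational rather than mathematical: the paper writes down an explicit algebra map $\Phi\colon kQ(\bar{D})\to \eta A(D)G\,\eta$ on vertices and arrows and checks that it kills the skew-gentle relations (using a partition $Q_1=Q_1^+\amalg Q_1^-$ of the arrows by orbit representatives of marked points to control which compositions lie in $I$), whereas you propose to read off the same presentation by computing $\mathrm{rad}/\mathrm{rad}^2$ of the compression and inserting $e_j\otimes 1=f_j^++f_j^-$ into the gentle relations, which is the identical case-by-case verification run in the opposite direction.
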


\begin{proof}
We denote by $A:=A(D)$ and $\bar{A}:=\bar{A}(\bar{D})$.

The description of the quiver of $AG$ follows from \cite{ReitenRiedtmann}. But in order to understand the relations, we need to exhibit a specific idempotent $\eta\in AG$ which turns $\eta AG\eta $ into a basic algebra, together with an explicit isomorphism $\varphi:\bar{A}\longrightarrow \eta AG\eta $.

The action of $\sigma$ is free on $M$ and $P$, which allows to write $M=M^+\amalg M^-$ and $P=P^+\amalg P^-$ by choosing a representative for each orbit. This choice induces a partition $Q_1=Q_1^+\amalg Q_1^-$ of the arrows of $Q$ where an arrow 
$i\to j$ is in  $Q_1^+$ if and only if the corresponding endpoint common to arc $i$ and $j$ is in $M^+\cup P^+$. We denote the arrows in $Q_1^\epsilon$ by $\alpha^\epsilon$ for $\epsilon \in \{+,-\}$. This partition of arrows implies that a composition $\alpha^{\epsilon}\beta^{\epsilon'}$ is in $I$ if and only if $\epsilon=\epsilon'$. 

Now we choose a representative for each $\sigma$-orbit of the vertices $Q_0$ and denote $Q_0=Q_0^+\amalg Q_0^-\amalg Q_0^{\rm fix}$ (this choice is done independently from the choice of arrows).
Then a complete set of primitive pairwise orthogonal idempotents of $AG$ is given as follows:
$$\{e_{i^+}\smallten 1_G, i^+\in Q_0^+\}\cup \{e_{i^-}\smallten 1_G, i^-\in Q_0^-\}\cup \{e_j\smallten \frac{1+\sigma}{2}, e_j\smallten \frac{1-\sigma}{2}, j\in Q_0^{\rm fix}\}.$$
The automorphism $\sigma\otimes 1_G$ of $AG$ induces an isomorphism between the projectives $(e_{i^+}\smallten 1) AG$ and $(e_{i^-}\smallten 1) AG$. Let us fix
$$\eta:= \sum_{i^+\in Q_0^+}e_{i^+}\smallten 1 + \sum_{j\in Q_0^{\rm fix}}e_j\smallten 1,$$ then using \cite{ReitenRiedtmann}, the algebra $\eta AG \eta$ is basic and we have an isomorphism of algebras
$(AG)_{\rm b}\simeq \eta AG \eta$.

Consider now   the projection map $p:\surf\to \bar{\surf}$. 
Let $\gamma_i$ be a $\rpoint$-arc in $\bar{D}$, corresponding to a vertex $i$ in the quiver $Q(\bar{D})$ of $\bar{A}(\bar{D})$. Then $p^{-1}(\gamma_i)$ is a pair ${\gamma_i^+,\gamma_i^-}\in D$ with $\sigma (\gamma^+_i)=\gamma^-_i$ which corresponds to vertices $i^+$ and $i^-$ in $Q(D)$. If $\gamma_i$ is a $\cross$-arc in $\bar{D}$, it corresponds to two vertices $i_0$ and $i_1$ in $\bar{Q}$. Its preimage in $\surf$ is an arc of $D$ which is $\sigma$-invariant, so there is one corresponding vertex in $Q(D)$ denoted by $i$.

Let $\gamma_i$ and $\gamma_j$ be two arcs in $\bar{D}$ ($\rpoint$, or $\cross$) having a common endpoint $m\in \bar{M}$ and such that $\gamma_i$ is immediately followed by $\gamma_j$ in the counterclockwise direction around  $m$. The point $m$ has exactly two preimages $m^+\in M^+$ and $m^-\in M^-$ in $\surf$. Hence there are exactly two arrows $\alpha^+\in Q_1^+$ and $\alpha^{-}\in Q_1^-$ in the quiver of $D$. Note that if $\gamma_i$ (resp. $\gamma_j$) is a $\rpoint$-arc, the  source (resp. tail) of $\alpha^+$ maybe either $i^+$ or $i^-$ (resp. $j^+$ or $j^-$). The possible local configurations of the two quivers $Q(D)$ and $Q(\bar{D})$ are summarized in Figure \ref{figure::basic algebra}.

\begin{figure}
\renewcommand{\arraystretch}{1.2}
\begin{tabular}{|c|c|c|c|}
\hline
$\bar{D}$
   &  $Q(\bar{D})$ & $D$ & $Q(D)$\\
\hline $\scalebox{0.8}{
  \begin{tikzpicture}[scale=0.6, >=stealth]
  \node at (0,0) {$\rpoint$};
  \node at (1,2) {$\rpoint$};
  \node at (2,0) {$\rpoint$};
  \draw[red] (0,0) --node [xshift=-5pt]{$\gamma_i$} (1,2);
  \draw[red] (1,2) --node [xshift=5pt]{$\gamma_j$} (2,0);
  \node at (1,2.4) {$m$};
  \end{tikzpicture}}$
  
&  $\scalebox{0.6}{\begin{tikzpicture}[scale=1, >=stealth]
\node (I) at (0,0) {$i$};
\node (J) at (2,0) {$j$};
\draw[thick, ->] (I)--node[fill=white, inner sep=0pt]{$\alpha$}(J);
\end{tikzpicture}}$ & $\scalebox{0.8}{
  \begin{tikzpicture}[scale=0.4, >=stealth]
  \node at (0,0) {$\rpoint$};
  \node at (1,2) {$\rpoint$};
  \node at (2,0) {$\rpoint$};
  \draw[red] (0,0) -- (1,2);
  \draw[red] (1,2) --(2,0);
   \node at (1,2.6) {$m^+$};
  \begin{scope}[rotate=180, xshift=-6cm]
  \node at (0,0) {$\rpoint$};
  \node at (1,2) {$\rpoint$};
  \node at (2,0) {$\rpoint$};
  \draw[red] (0,0) -- (1,2);
  \draw[red] (1,2) --(2,0);
   \node at (1,2.3) {$m^-$};
  \end{scope}
  \end{tikzpicture}}$ & $\scalebox{0.6}{\begin{tikzpicture}[scale=1, >=stealth]
\node (I) at (0,0) {$.$};
\node (J) at (2,0) {$.$};
\node (I1) at (0,1) {$.$};
\node (J1) at (2,1) {$.$};
\draw[thick, ->] (I)--node[fill=white, inner sep=0pt]{$\alpha^+$}(J);
\draw[thick, ->] (I1)--node[fill=white, inner sep=0pt]{$\alpha^-$}(J1);
\end{tikzpicture}}$ \\
\hline

$\scalebox{0.8}{
  \begin{tikzpicture}[scale=0.6, >=stealth]
  \node at (0,0) {$\rpoint$};
  \node at (1,2) {$\rpoint$};
  \node at (2,0) {$\cross$};
  \draw[red] (0,0) --node [xshift=-5pt]{$\gamma_i$} (1,2);
  \draw[red] (1,2) --node [xshift=5pt]{$\gamma_j$} (2,0);
  \node at (1,2.4) {$m$};
  \end{tikzpicture}}$
  
&  $\scalebox{0.6}{\begin{tikzpicture}[scale=1, >=stealth]
\node (I) at (0,0) {$i$};
\node (J0) at (2,1) {$j_0$};
\node (J1) at (2,-1) {$j_1$};
\draw[thick, ->] (I)--node[fill=white, inner sep=0pt]{$^0\alpha$}(J0);
\draw[thick, ->] (I)--node[fill=white, inner sep=0pt]{$^1\alpha$}(J1);
\end{tikzpicture}}$ 
& $\scalebox{0.8}{
  \begin{tikzpicture}[scale=0.4, >=stealth]
  \node at (0,0) {$\rpoint$};
  \node at (1,2) {$\rpoint$};
  \node at (2,0) {$\cross$};
  \draw[red] (0,0) -- (1,2);
  \draw[red] (1,2) --(2,0);
   \node at (1,2.6) {$m^+$};
  \begin{scope}[rotate=180, xshift=-4cm]
  \node at (0,0) {$\rpoint$};
  \node at (1,2) {$\rpoint$};
 
  \draw[red] (0,0) -- (1,2);
  \draw[red] (1,2) --(2,0);
   \node at (1,2.3) {$m^-$};
  \end{scope}
  \end{tikzpicture}}$ & $\scalebox{0.6}{\begin{tikzpicture}[scale=1, >=stealth]
\node (I) at (0,1) {$.$};
\node (J) at (2,0) {$j$};
\node (I1) at (0,-1) {$.$};

\draw[thick, ->] (I)--node[fill=white, inner sep=0pt]{$\alpha^+$}(J);
\draw[thick, ->] (I1)--node[fill=white, inner sep=0pt]{$\alpha^-$}(J);
\end{tikzpicture}}$ \\
\hline

$\scalebox{0.8}{
  \begin{tikzpicture}[scale=0.6, >=stealth]
  \node at (0,0) {$\cross$};
  \node at (1,2) {$\rpoint$};
  \node at (2,0) {$\rpoint$};
  \draw[red] (0,0) --node [xshift=-5pt]{$\gamma_i$} (1,2);
  \draw[red] (1,2) --node [xshift=5pt]{$\gamma_j$} (2,0);
  \node at (1,2.4) {$m$};
  \end{tikzpicture}}$
  
&  $\scalebox{0.6}{\begin{tikzpicture}[scale=1, >=stealth]
\node (I) at (0,1) {$i_0$};
\node (I1) at (0,-1) {$i_1$};
\node (J) at (2,0) {$j$};
\draw[thick, ->] (I)--node[fill=white, inner sep=0pt]{$\alpha^0$}(J);
\draw[thick, ->] (I1)--node[fill=white, inner sep=0pt]{$\alpha^1$}(J);
\end{tikzpicture}}$ & $\scalebox{0.8}{
  \begin{tikzpicture}[scale=0.4, >=stealth]
  \node at (0,0) {$\cross$};
  \node at (1,2) {$\rpoint$};
  \node at (2,0) {$\rpoint$};
  \draw[red] (0,0) -- (1,2);
  \draw[red] (1,2) --(2,0);
   \node at (1,2.6) {$m^+$};
  \begin{scope}[rotate=180]
 
  \node at (1,2) {$\rpoint$};
  \node at (2,0) {$\rpoint$};
  \draw[red] (0,0) -- (1,2);
  \draw[red] (1,2) --(2,0);
   \node at (1,2.3) {$m^-$};
  \end{scope}
  \end{tikzpicture}}$ & $\scalebox{0.6}{\begin{tikzpicture}[scale=1, >=stealth]
\node (I) at (0,0) {$i$};
\node (J) at (2,1) {$.$};
\node (J1) at (2,-1) {$.$};
\draw[thick, ->] (I)--node[fill=white, inner sep=0pt]{$\alpha^+$}(J);
\draw[thick, ->] (I)--node[fill=white, inner sep=0pt]{$\alpha^-$}(J1);
\end{tikzpicture}}$ \\
\hline

$\scalebox{0.8}{
  \begin{tikzpicture}[scale=0.6, >=stealth]
  \node at (0,0) {$\cross$};
  \node at (1,2) {$\rpoint$};
  \node at (2,0) {$\cross$};
  \draw[red] (0,0) --node [xshift=-5pt]{$\gamma_i$} (1,2);
  \draw[red] (1,2) --node [xshift=5pt]{$\gamma_j$} (2,0);
  \node at (1,2.4) {$m$};
  \end{tikzpicture}}$
  
&  $\scalebox{0.6}{\begin{tikzpicture}[scale=1, >=stealth]
\node (I) at (0,1) {$i_0$};
\node (J) at (2,1) {$j_0$};
\draw[thick, ->] (I)--node[fill=white, inner sep=0pt]{$^0\alpha^0$}(J);
\node (I1) at (0,-1) {$i_1$};
\node (J1) at (2,-1) {$j_1$};
\draw[thick, ->] (I1)--node[fill=white, inner sep=0pt, xshift=8pt, yshift=8pt]{$^0\alpha^1$}(J);
\draw[thick, ->] (I)--node[fill=white, inner sep=0pt, xshift=8pt, yshift=-8pt]{$^1\alpha^0$}(J1);
\draw[thick, ->] (I1)--node[fill=white, inner sep=0pt]{$^1\alpha^1$}(J1);
\end{tikzpicture}}$ 

& $\scalebox{0.8}{
  \begin{tikzpicture}[scale=0.4, >=stealth]
  \node at (0,0) {$\cross$};
  \node at (1,2) {$\rpoint$};
  \node at (2,0) {$\cross$};
  \node at (-1,-2) {$\rpoint$};
  \draw[red] (-1,-2) -- node [xshift=-8pt]{$\gamma_i$}(1,2);
  \draw[red] (1,2) --(2,0);
   \node at (1,2.6) {$m^+$};
  \begin{scope}[rotate=180, xshift=-4cm]
  \node at (0,0) {$\cross$};
  \node at (1,2) {$\rpoint$};
  \node at (-1,-2) {$\rpoint$};
  \draw[red] (-1,-2) --node [xshift=8pt]{$\gamma_i$} (1,2);
  \draw[red] (1,2) --(2,0);
   \node at (1,2.3) {$m^-$};
  \end{scope}
  \end{tikzpicture}}$ & $\scalebox{0.6}{\begin{tikzpicture}[scale=1, >=stealth]
\node (I) at (0,0) {$i$};
\node (J) at (2,0) {$j$};

\draw[thick, ->] (0.2,0.1)--node[yshift=8pt]{$\alpha^+$}(1.8,0.1);
\draw[thick, ->] (0.2,-0.1)--node[yshift=-5pt]{$\alpha^-$}(1.8,-0.1);
\end{tikzpicture}}$ \\
\hline
\end{tabular}
\caption{\label{figure::basic algebra}}
\end{figure}

Now we define a map $\Phi:k Q(\bar{D})\to \eta A(D)G \eta$ by 
\[ \begin{array}{lcll}
\Phi (\bar{e}_i) &= &e_{i^+}\smallten 1 & \\
\Phi(\bar{e}_{i_{\epsilon}}) &=&e_i\smallten \frac{1+(-1)^\epsilon \sigma}{2}   & \epsilon=0,1\\
\Phi(\alpha) &=&(e_{j^+}\smallten 1)((\alpha^++\alpha^-)\smallten (1+\sigma)) (e_{i^+}\smallten 1) & {\rm for}\ \alpha:i\to j ;\\

\Phi(\alpha^\epsilon) &=&(e_{j^+}\smallten 1)(\alpha^+\smallten 1+(-1)^{\epsilon}\alpha^-\smallten 1)(e_i\smallten \frac{1+(-1)^{\epsilon}\sigma}{2}) & \textrm{ for }\alpha^\epsilon :i_{\epsilon}\to j\\

\Phi(^{\epsilon}\alpha ) &=&(e_{j}\smallten \frac{1+(-1)^\epsilon \sigma}{2})(\alpha^+\smallten 1+(-1)^{\epsilon}\alpha^-\smallten 1)(e_{i^+}\smallten 1) & \textrm{ for }^\epsilon\alpha :i\to j_{\epsilon}\\

\Phi(^{\epsilon'}\alpha^{\epsilon} ) &=&(e_{j}\smallten \frac{1+(-1)^{\epsilon'} \sigma}{2})(\alpha^+\smallten 1)(e_{i}\smallten\frac{1+(-1)^{\epsilon}\sigma}{2} ) & \textrm{ for }^{\epsilon'}\alpha^\epsilon :i_{\epsilon}\to j_{\epsilon'}
\end{array}\]

It remains to check that the map $\Phi$ factors through the skew-gentle relations. 
Let $i$, $j$ and $k$ be consecutive arcs around a $\rpoint$-point in $\bar{D}$. 
Assume first that $i$, $j$ and $k$ are $\rpoint$-arcs. Then we have $$((\beta^++\beta^-)\smallten (1+\sigma)).((\alpha^++\alpha^-)\smallten(1+\sigma))=2(\beta^+\alpha^++\beta^-\alpha^-)\smallten (1+\sigma),$$
since the arrows $\beta^+$ and $\alpha^-$  (resp. $\beta^-$ and $\alpha^+$) do not compose. One can check that $\Phi(\beta\alpha)$ is  one of the 8 terms of the right hand side, depending on the sign index of the source and tail of $\alpha^+$ and $\beta^+$. 
Therefore we clearly have $\Phi(\beta\alpha)=0$, since $\beta^+\alpha^+$ and $\beta^-\alpha^-$ are in $I$.

For example assume that $\alpha^+:i^-\to j^-$ and $\beta^+:j^-\to k^+$. Then one has
\[ \Phi(\beta)  =  \beta^+\smallten \sigma \textrm{ and } \Phi(\alpha)=\alpha^-\smallten 1\]
 thus $\Phi(\beta\alpha)= \beta^+\alpha^+\smallten \sigma.$
The computations are similar if one of $i$, or $k$, or both are $\cross$-arcs. 

\medskip

Assume now that $j$ is a $\cross$-arc, and $i$ and $k$ are $\rpoint$-arcs. Then we have 
\begin{align*}
(\beta^+\smallten 1+\beta^-\smallten 1)(e_j\smallten\frac{1+\sigma}{2})(\alpha^+\smallten 1+\alpha^-\smallten 1)\\+(\beta^+\smallten 1-\beta^-\smallten 1)(e_j\smallten\frac{1-\sigma}{2})(\alpha^+\smallten 1-\alpha^-\smallten 1)\\
=(\beta^+\alpha^++\beta^-\alpha^-)\smallten( 1+\sigma)
\end{align*}
Therefore we obtain $\Phi((\beta^0) (^0\alpha)+(\beta^1) (^1\alpha))\in \eta (I\otimes 1+I\otimes \sigma)\eta$.
The computations are similar for one of $i$, $k$ or both  being $\cross$-arcs.

Finally $\Phi$ is an isomorphism of algebras.
\end{proof}

\subsection{Examples}

\begin{example}\label{example::coveringDn}

Consider the disc with $2n$ marked points on the boundary with $\sigma$ being the central symmetry and with the following $G$-dissection.

 \[
  \scalebox{0.8}{
  \begin{tikzpicture}[scale=0.8, >=stealth]
  
 \draw (0,0) circle (4);

 \node at (4,0) {$\rpoint$};
 \node at (2.8,2.8) {$\rpoint$};
 \node at (0,4) {$\rpoint$};
 \node at (-2.8,2.8) {$\rpoint$};
 \node at (-4,0) {$\rpoint$};
 \node at (-2.8,-2.8) {$\rpoint$};
 \node at (0,-4) {$\rpoint$};
 \node at (2.8,-2.8) {$\rpoint$};

 \draw[red,thick] (0,-4)--node [black,fill=white, inner sep=1pt]{$1$}(0,4)..node [black,fill=white, inner sep=0pt]{$2^+$} controls (0,3.5) and (-2,2)..(-2.8,2.8)..node [black,fill=white, inner sep=0pt]{$3^+$}controls (-2,2) and (-3,0).. (-4,0)..node [black,fill=white, inner sep=0pt]{$4^+$}controls (-3,0) and (-2,-2)..(-2.8,-2.8);
  \draw[red, thick](0,-4).. node [black,fill=white, inner sep=0pt]{$2^-$}controls (0,-3) and (2,-2).. (2.8,-2.8).. node [black,fill=white, inner sep=0pt]{$3^-$}controls (2,-2) and (3,0).. (4,0).. node [black, fill=white, inner sep=0pt]{$4^-$} controls (3,0) and (2,2) ..(2.8,2.8);

  \end{tikzpicture}}\]    
  
  The corresponding gentle algebra is the path algebra $\Lambda$ of the following quiver:
   \[
  \scalebox{0.8}{
  \begin{tikzpicture}[scale=0.8, >=stealth]
  \node (1) at (0,0) {$1$}; 

 \node (2) at (1,1) {$2^+$};
  \node (3) at (3,1) {$3^+$};
   \node (4) at (5,1) {$4^+$};
    \node (2-) at (1,-1) {$2^-$};
     \node (3-) at (3,-1) {$3^-$};
      \node (4-) at (5,-1) {$4^-$};

\draw[thick, ->] (1)--(2);
\draw[thick, ->] (1)--(2-);
\draw[thick, ->] (2)--(3);
\draw[thick, ->] (3)--(4);
\draw[thick, ->] (2-)--(3-);
\draw[thick, ->] (3-)--(4-);

\end{tikzpicture}}\]

The automorphism $\sigma$ has a unique fixed point, so it is immediate to see that the corresponding orbifold surface $\surf/\sigma$ is the disc with one orbifold point and $n$ marked points on the boundary. The skew-group algebra $\Lambda G$ (where the group action is given by seding the vertices $+$ to $-$)  is Morita equivalent to the path algebra of $\mathbb D_n$ (cf Example \ref{example::Dn}).

\medskip

Similarly, taking a cylinder with $n$ marked points on each boundary component, with $\sigma$ sending one boundary component to the other, we  can consider the following $G$-invariant dissection:

  \[
  \scalebox{0.8}{
  \begin{tikzpicture}[scale=0.8, >=stealth]

\draw (0,0) circle (2 and 1);  
\draw[dotted] (0,-6) circle (2 and 1);
\draw (-2,0)--(-2,-6).. controls (-2,-7.4) and (2,-7.4).. (2,-6)--(2,0);

\node at (-2,0) {$\rpoint$};
\node at (0,1) {$\rpoint$};
\node at (2,0) {$\rpoint$};
\node at (0,-1) {$\rpoint$};
\node at (2,-6) {$\rpoint$};
\node at (-2,-6) {$\rpoint$};
\node at (0,-5) {$\rpoint$};
\node at (0,-7) {$\rpoint$};
  
\draw[thick] (-4,-3)--(-2,-3);
\draw[thick] (4,-3)--(2,-3);

\draw[red, thick] (-2,0)..node [black,fill=white, inner sep=0pt]{$3^+$}controls (-2,-2) and (0,-2).. (0,-1)..node [black,fill=white, inner sep=0pt]{$2^+$}controls (0,-2) and (2,-2).. (2,0)--node [black,fill=white, inner sep=1pt]{$1$}(2,-6);

\draw[red, thick] (-2,-6)..node [black,fill=white, inner sep=0pt]{$4^-$}controls (-2,-5) and (0,-6).. (0,-7).. node [black,fill=white, inner sep=0pt]{$5$}controls (0,-5) and (-2,-4).. (-2,-3);

\draw[red, thick, loosely dotted] (-2,-6).. node [black,fill=white, inner sep=0pt]{$3^-$}controls (-2,-4) and (0,-3).. (0,-5).. node [black,fill=white, inner sep=0pt]{$2^-$}controls (0,-3) and (2,-4).. (2,-6);

\draw[red, thick, loosely dotted] (-2,0).. node [black,fill=white, inner sep=0pt]{$4^+$}controls (-2,-1) and (0,0)..(0,1)
 ..controls (0,-1) and (-2,-2)..(-2,-3);
 
 \draw (3.5,-2.5) arc (90:270:0.5);
 \draw[->] (3.5,-3.5)--(3.7,-3.5);
  
  \end{tikzpicture}}\]
  
 The corresponding gentle algebra is the path algebra $\Lambda$ of the following quiver  
 
   \[
  \scalebox{0.8}{
  \begin{tikzpicture}[scale=0.8, >=stealth]
  \node (1) at (0,0) {$1$}; 

 \node (2) at (1,1) {$2^+$};
  \node (3) at (3,1) {$3^+$};
   \node (4) at (5,1) {$4^+$};
    \node (2-) at (1,-1) {$2^-$};
     \node (3-) at (3,-1) {$3^-$};
      \node (4-) at (5,-1) {$4^-$};
      \node (5) at (6,0) {$5$};

\draw[thick, ->] (1)--(2);
\draw[thick, ->] (1)--(2-);
\draw[thick, ->] (2)--(3);
\draw[thick, ->] (3)--(4);
\draw[thick, ->] (2-)--(3-);
\draw[thick, ->] (3-)--(4-);
\draw[thick, ->] (4-)--(5);
\draw[thick, ->] (4)--(5);

\end{tikzpicture}}\]

The automorphism $\sigma$ has two fixed points, and it is immediate to see that the corresponding orbifold surface $\surf/\sigma$ is the disc with two orbifold points and $n$ marked point on the boundary. The skew-group algebra $\Lambda G$ (where the group action is given by sending the vertices $+$ to $-$)  is Morita equivalent to the path algebra of $\widetilde{\mathbb D}_n$ (cf Example \ref{example::Dn}).
 \medskip

More generally, one can consider a surface of genus $g$ with one or two boundary components and with $\sigma$ being the hyperelliptic involution:

   \[
  \scalebox{0.8}{
  \begin{tikzpicture}[scale=0.8, >=stealth]

\shadedraw[bottom color=red!20] (-9,0)..controls (-9,1) and (-7,2)..(-6,2)..controls (-5,2) and (-1,2.5)..(-1,3).. controls (-1,2.5) and (1,2.5)..(1,3).. controls (1,2.5) and (5,2).. (6,2).. controls (7,2) and (9,1).. (9,0).. controls (9,-1) and (7,-2).. (6,-2).. controls (5,-2) and (1,-2.5).. (1,-3).. controls (1,-3.5) and (-1,-3.5).. (-1,-3).. controls (-1,-2.5) and (-5,-2).. (-6,-2).. controls (-7,-2) and (-9,-1).. (-9,0);  

\draw[fill=red!10] (0,3) circle (1 and 0.4);

\draw[fill=white] (-7,0)..controls (-6.5,-0.5) and (-5.5,-0.5).. (-5,0).. controls (-5.5,0.5) and (-6.5,0.5).. (-7,0);

\draw (-7,0)--(-7.5,0.5);
\draw (-5,0)--(-4.5,0.5);

 \draw[fill=white] (7,0)..controls (6.5,-0.5) and (5.5,-0.5).. (5,0).. controls (5.5,0.5) and (6.5,0.5).. (7,0); 
 \draw (7,0)--(7.5,0.5);
\draw (5,0)--(4.5,0.5);

 \draw[fill=white] (-1,0)..controls (-0.5,-0.5) and (0.5,-0.5).. (1,0).. controls (0.5,0.5) and (-0.5,0.5).. (-1,0); 
 \draw (1,0)--(1.5,0.5);
\draw (-1,0)--(-1.5,0.5);

\node at (0,3.4) {$\rpoint$};

\node at (0,-3.4) {$\rpoint$};

\draw[red, thick] (0,-3.4).. controls (0,-1) and (-5,0.5)..(-5,0);
\draw[red, thick] (0,-3.4).. controls (0,-1) and (5,0.5)..(5,0);
 \draw[red, thick] (0,-3.4).. controls (0,-1) and (-7,-1)..(-7,0); 
  \draw[red, thick] (0,-3.4).. controls (0,-1) and (7,-1)..(7,0); 
  
   \draw[red, thick] (0,-3.4).. controls (0,-2) and (-9,-0.5)..(-9,0); 
      \draw[red, thick] (0,-3.4).. controls (0,-2) and (9,-0.5)..(9,0); 
\draw[red, thick] (0,-3.4).. controls (0,-2) and (-1,0).. (-1,0);      
      \draw[red, thick] (0,-3.4).. controls (0,-2) and (1,0).. (1,0);  
      
      \draw[red, thick, loosely dotted] (0,3.4).. controls (0,1) and (-5,-0.5)..(-5,0);
\draw[red, thick, loosely dotted] (0,3.4).. controls (0,1) and (5,-0.5)..(5,0);
 \draw[red, thick, loosely dotted] (0,3.4).. controls (0,1) and (-7,1)..(-7,0); 
  \draw[red, thick, loosely dotted] (0,3.4).. controls (0,1) and (7,1)..(7,0); 
  
   \draw[red, thick, loosely dotted] (0,3.4).. controls (0,2) and (-9,0.5)..(-9,0); 
      \draw[red, thick, loosely dotted] (0,3.4).. controls (0,2) and (9,0.5)..(9,0); 
\draw[red, thick, loosely dotted] (0,3.4).. controls (0,2) and (-1,0).. (-1,0);      
      \draw[red, thick, loosely dotted] (0,3.4).. controls (0,2) and (1,0).. (1,0);    
      
\draw[thick, blue] (-11,0)--(-9,0);
\draw[thick, blue] (-7,0)--(-5,0);
\draw[thick, blue] (5,0)--(7,0);
\draw[thick, blue] (9,0)--(11,0);
\draw[thick, blue] (-1,0)--(1,0);

\draw[blue] (10,0.5) arc (90:270:0.5);
\draw[->, blue] (10,-0.5)--(10.2,-0.5);

\begin{scope}[yshift=-8cm, xshift=-5cm]

\shadedraw[bottom color=red!20] (0,0)..controls (0,2) and ( 9,2).. (12,2).. controls (11.5,2) and (11.5,-2).. (12,-2).. controls (9,-2) and (0,-2).. (0,0);
\draw[fill=red!10] (12,0) circle (0.5 and 2);

\draw[fill=white] (2,0)..controls (2.5,-0.5) and (3.5,-0.5).. (4,0).. controls (3.5,0.5) and (2.5,0.5).. (2,0);

\draw (2,0)--(1.5,0.5);
\draw (4,0)--(4.5,0.5);

\draw[fill=white] (7,0)..controls (7.5,-0.5) and (8.5,-0.5).. (9,0).. controls (8.5,0.5) and (7.5,0.5).. (7,0);

\draw (7,0)--(6.5,0.5);
\draw (9,0)--(9.5,0.5);

\node at (12,2) {$\rpoint$};
\node at (12,-2) {$\rpoint$};

\draw[red, thick] (12,2).. controls (11,1.5) and (9.5,0.5)..(9,0);
\draw[red, thick] (12,2).. controls (11,1.8) and (7,1)..(7,0);
\draw[red, thick] (12,2).. controls (11,2) and (4.5,1)..(4,0);
\draw[red, thick] (12,2).. controls (11,2) and (2,1)..(2,0);
\draw[red, thick] (12,2).. controls (11,2) and (0,1)..(0,0);

\draw[red, thick, loosely dotted] (12,-2).. controls (11,-1.5) and (9.5,-0.5)..(9,0);
\draw[red, thick, loosely dotted] (12,-2).. controls (11,-1.8) and (7,-1)..(7,0);
\draw[red, thick, loosely dotted] (12,-2).. controls (11,-2) and (4.5,-1)..(4,0);
\draw[red, thick, loosely dotted] (12,-2).. controls (11,-2) and (2,-1)..(2,0);
\draw[red, thick, loosely dotted] (12,-2).. controls (11,-2) and (0,-1)..(0,0);

\draw[thick, blue] (-2,0)--(0,0);
\draw[thick, blue] (2,0)--(4,0);
\draw[thick, blue] (7,0)--(9,0);
\draw[thick, blue] (11.5,0)--(16,0);

\draw[blue] (14,0.5) arc (90:270:0.5);
\draw[->, blue] (14,-0.5)--(14.2,-0.5);  
\end{scope}

  \end{tikzpicture}}\]
 
The corresponding orbifold surface is a disc with an even number of orbifold points in the interior in the first case, and with odd number in the second case. 

The corresponding gentle and skew-gentle algebras are as follows:

   \[
  \scalebox{0.8}{
  \begin{tikzpicture}[scale=0.8, >=stealth]
\node (A1) at (0,0) {$1$};
\node (A2) at (2,0) {$2$};
\node (A3) at (4,0) {$ $};
\node (A4) at (7,0) {$ $};
\node (A5) at (9,0) {$n$};

\draw[thick, ->] (0.2,0.1)--(1.8,0.1);
\draw[thick, ->] (0.2,-0.1)--(1.8,-0.1);
\draw[thick, ->] (2.2,0.1)--(3.8,0.1);
\draw[thick, ->] (2.2,-0.1)--(3.8,-0.1);

\draw[thick,loosely dotted] (1.5,0.1) arc (180:0:0.5);
\draw[thick,loosely dotted] (1.5,-0.1) arc (180:360:0.5);
\draw[thick,loosely dotted] (3.5,0.1) arc (180:0:0.5);
\draw[thick,loosely dotted] (3.5,-0.1) arc (180:360:0.5);
\draw[thick,loosely dotted] (6.5,0.1) arc (180:0:0.5);
\draw[thick,loosely dotted] (6.5,-0.1) arc (180:360:0.5);

\draw[loosely dotted] (A3)--(A4);
\draw[thick, ->] (7.2,0.1)--(8.8,0.1);
\draw[thick, ->] (7.2,-0.1)--(8.8,-0.1);

\begin{scope}[xshift=11cm]
\node (10) at (0,-1) {$1_0$};
\node (11) at (0,1) {$1_1$};
\node (20) at (2,-1) {$2_0$};
\node (21) at (2,1) {$2_1$};
\node (30) at (4,-1) {$ $};
\node (31) at (4,1) {$ $};

\node (0) at (6,-1) {$ $};
\node (1) at (6,1) {$ $};
\node (n0) at (8,-1) {$n_0$};
\node (n1) at (8,1) {$n_1$};

\draw[thick, ->] (10)--(20);
\draw[thick, ->] (10)--(21);
\draw[thick, ->] (11)--(20);
\draw[thick, ->] (11)--(21);
\draw[thick, ->] (20)--(30);
\draw[thick, ->] (20)--(31);
\draw[thick, ->] (21)--(30);
\draw[thick, ->] (21)--(31);

\draw[thick, loosely dotted] (31)--(1);
\draw[thick, loosely dotted] (30)--(0);

\draw[thick,->] (0)--(n0);
\draw[thick, ->] (0)--(n1);
\draw[thick, ->] (1)--(n0);
\draw[thick, ->] (1)--(n1);

\end{scope}

  \end{tikzpicture}}\]
 
\end{example}

\begin{example}\label{example1}

Let us consider the following $G$-dissected surface, where $\sigma$ is given by the central symmetry around the center of the square. It is a torus with two boundary components.

\[\scalebox{0.6}{
  \begin{tikzpicture}[scale=0.7]
  
  \draw (0,0)--(5,0)--(5,5)--(0,5)--(0,0); 
 \node at (0,0) {$\rpoint$};
 \node at (4,0) {$\rpoint$};
 \node at (5,0) {$\rpoint$};
 \node at (5,2) {$\rpoint$};
 \node at (5,3) {$\rpoint$};
 \node at (5,5) {$\rpoint$};
 \node at (1,5) {$\rpoint$};
 \node at (0,5) {$\rpoint$};
 \node at (0,3) {$\rpoint$};
 \node at (0,2) {$\rpoint$};

 \draw[thick, red] (0,5)--(0,3)--(1,5)--(5,5)--(0,0)--(4,0)--(5,2)--(5,0);
 \draw[thick,red] (0,0)--(0,2);
 \draw[thick, red] (5,3)--(5,5);
 
 \node[red] at (2,0) {$>$};
 \node[red] at (3,5) {$>$};
 \node[red] at (0,1) {$\triangle$};
 \node[red] at (5,1) {$\triangle$};
 \node[red, rotate=90] at (0,4) {$>>$};
 \node[red, rotate=90] at (5,4) {$>>$};

 \end{tikzpicture}}\] 
 
 One immediately sees that $\sigma$ has two fixed points, marked here by a $\cross$. Let us choose some representative in each orbit of the arcs of $D$, and of each marked point.
 
 \[\scalebox{0.6}{
  \begin{tikzpicture}[scale=0.7]
  
  \draw (0,0)--(5,0)--(5,5)--(0,5)--(0,0); 
 \node at (0,0) {$\rpoint$};
 \node at (4,0) {$\rpoint$};
 \node at (5,0) {$\rpoint$};
 \node at (5,2) {$\rpoint$};
 \node at (5,3) {$\rpoint$};
 \node at (5,5) {$\rpoint$};
 \node at (1,5) {$\rpoint$};
 \node at (0,5) {$\rpoint$};
 \node at (0,3) {$\rpoint$};
 \node at (0,2) {$\rpoint$};
 \node at (-0.2,-0.2) {$+$};
 \node at (4,-0.2) {$-$};
 \node at (5.2,-0.2) {$-$};
 \node at (5.3,2) {$-$};
 \node at (5.3,3) {$+$};
 \node at (5.2,5.2) {$-$};
 \node at (1,5.3) {$+$};
 \node at (-0.2,5.2) {$-$};
 \node at (-0.3,3) {$+$};
 \node at (-0.3,2) {$-$};

 \draw[thick, red] (0,5)--(0,3)--(1,5)--(5,5)--(0,0)--(4,0)--(5,2)--(5,0);
 \draw[thick,red] (0,0)--(0,2);
 \draw[thick, red] (5,3)--(5,5);
 
 \node at (2,0) {$\cross$};
 \node at (2.5,2.5) {$\cross$};
 \node at (3,5) {$\cross$};
 \node at (2,-0.5) {$3$};
 \node at (3,5.5) {$3$};
 \node at (-0.5,1) {$1^+$};
 \node at (5.5,1) {$1^+$};
 \node at (-0.5,4) {$1^-$};
 \node at (5.5,4) {$1^-$};
 \node at (2.7,2.3) {$2$};
 \node at (1,4) {$4^+$};
 \node at (4,1) {$4^-$};

 \end{tikzpicture}}\] 
 
 The associated gentle pair $(Q,I)$ is as follows:
 
 \[\scalebox{0.6}{
  \begin{tikzpicture}[scale=0.7,>=stealth]
  \node at (-1,0) {$Q=$};
 \node (1+) at (0,2) {$1^+$};
 \node (1-) at (0,-2) {$1^-$};
 \node (2) at (2,0) {$2$};
 \node (3) at (5,0) {$3$};
 \node (4+) at (7,-2) {$4^+$};
 \node (4-) at (7,2) {$4^-$};
 \draw[thick,->] (1+)--node [fill=white, inner sep=0pt]{$a^+$} (2);
 \draw[thick, ->] (1-)--node [fill=white, inner sep=0pt]{$a^-$}(2);
 \draw[thick,->] (2.3,0.2)--node [fill=white, inner sep=0pt]{$b^+$} (4.7,0.2);
 \draw[thick,->] (2.3,-0.2)--node [fill=white, inner sep=0pt, xshift=0.2cm]{$b^-$} (4.7,-0.2);
 \draw[thick,->] (3)--node [fill=white, inner sep=0pt, xshift=-0.2cm]{$c^+$} (4+);
 \draw[thick, ->] (3)--node [fill=white, inner sep=0pt, xshift=0.1cm]{$c^-$} (4-);
 \draw[thick, ->] (1+)--node [fill=white, inner sep=0pt]{$d^+$} (4-);
 \draw[thick, ->] (1-)--node [fill=white, inner sep=0pt]{$d^-$} (4+);
 
 \node at (12,0) {$I=\{b^+a^+, b^-a^-,c^+b^+,c^-b^-\}$};

  \end{tikzpicture}}\] 
  
  Note that in this example, it is not possible to choose representatives of the orbits so that no $+$ labeled arrow  has a $-$ labeled start or end vertex.

 The $\cross$-dissected orbifold corresponding to $(\surf,\sigma,D)$ is a cylinder with two orbifold points.
 
 \[\scalebox{0.8}{
  \begin{tikzpicture}[scale=1,>=stealth]
 
 \draw (0,0)--(5,0)--(5,2)--(0,2)--(0,0);
\node at (0,0) {$\rpoint$};
\node at (5,0)  {$\rpoint$};
\node at (5,2) {$\rpoint$};
\node at (0,2) {$\rpoint$};
\node at (2,1) {$\cross$};
\node at (3,1) {$\cross$};

\draw[thick, red] (0,0)--node [black, xshift=-0.2cm]{$1$} (0,2);
\draw[thick, red] (5,0)--node [black, xshift=0.2cm]{$1$} (5,2);
\draw[thick, red] (5,2)--node [black,fill=white, inner sep=0pt]{$3$} (2,1);
\draw[thick, red] (5,2)--node [black,fill=white, inner sep=0pt]{$2$} (3,1);
\draw[thick, red] (0,0)..controls (2,2) and (4,1.8)..node [black,fill=white, inner sep=0pt]{$4$} (5,2);

\end{tikzpicture}}\]

The corresponding skew-gentle algebra is given by the following skew-gentle triple 

 \[\scalebox{0.8}{
  \begin{tikzpicture}[scale=1.5,>=stealth]
\node at (0,0) {$Q=$};
\node (1) at (1,0) {$1$};
\node  (2) at (2,1) {$2$};
\node (3) at (4,1) {$3$};
\node (4) at (5,0) {$4$};

\draw[->] (1)--node [fill=white, inner sep=0pt]{$a$}(2);
\draw[->] (2)--node [fill=white, inner sep=0pt]{$b$}(3);
\draw[->] (3)--node [fill=white, inner sep=0pt]{$c$}(4);
\draw[->] (1)--node [fill=white, inner sep=0pt]{$d$}(4);

\draw[->] (1.9,1)..controls (1.5,1.5) and (2.5,1.5)..node [fill=white, inner sep=0pt]{$\epsilon_2$}(2.1,1);
\draw[->] (3.9,1)..controls (3.5,1.5) and (4.5,1.5)..node [fill=white, inner sep=0pt]{$\epsilon_3$}(4.1,1);

\node at (7,0) {$\bar{I}=\{ba, cb\}$};
\node at (7,1) {${\rm Sp}=\{\epsilon_2,\epsilon_3\}$};
  
  \end{tikzpicture}}\]
  
Therefore the algebra $\bar{A}$ is given by the following quiver with relations 

 \[\scalebox{0.8}{
  \begin{tikzpicture}[scale=1.5,>=stealth]
\node at (0,0) {$\bar{Q}=$};
\node (1) at (1,0) {$1$};
\node  (20) at (2,1) {$2_0$};
\node (30) at (4,1) {$3_0$};
\node  (21) at (2,-1) {$2_1$};
\node (31) at (4,-1) {$3_1$};
\node (4) at (5,0) {$4$};

\draw[->] (1)--node [fill=white, inner sep=0pt]{$^0a$}(20);
\draw[->] (1)--node [fill=white, inner sep=0pt]{$^1a$}(21);
\draw[->] (20)--node [fill=white, inner sep=0pt]{$^0b^0$}(30);
\draw[->] (20)--node [fill=white, inner sep=0pt, xshift=0.5cm,yshift=-0.5cm]{$^1b^0$}(31);
\draw[->] (21)--node [fill=white, inner sep=0pt,xshift=0.5cm, yshift=0.5cm]{$^0b^1$}(30);
\draw[->] (21)--node [fill=white, inner sep=0pt]{$^1b^1$}(31);
\draw[->] (30)--node [fill=white, inner sep=0pt]{$c^0$}(4);
\draw[->] (31)--node [fill=white, inner sep=0pt]{$c^1$}(4);
\draw[->] (1)--node [fill=white, inner sep=0pt, xshift=-1cm]{$d$}(4);

\node at (3,-2) {$\bar{I}=\{(^0b^0)(^0a)+(^0b^1)(^1a), (^1b^0)(^0a)+(^1b^1)(^1a),(c^0)(^0b^0)+(c^1)(^1b^0), (c^0)(^0b^1)+(c^1)(^1b^1)\}$};

  \end{tikzpicture}}\]

\end{example}

\subsection{Construction of a cover from a $\cross$-dissection}

Given a skew-gentle algebra $\bar{A}$ associated to a $\cross$-dissection $D$, there is a natural action of $\dual=\{1,\chi\}$ on its quiver  $\bar{Q}(D)$ defined as follows:

\begin{enumerate}
\item it fixes all vertices corresponding to $\rpoint$-arcs;
\item it fixes all arrows between two vertices corresponding to $\rpoint$-arcs;
\item for each $\cross$-arc, it switches the two vertices corresponding to it, 
\item it switches accordingly the arrows with at least one vertex attached to a $\cross$-arc.
\end{enumerate}
This action clearly induces an action on the skew-gentle algebra $\bar{A}$.
It is known from \cite{GePe} that the skew-group algebra $\bar{A}\dual$ with such an action is (Morita equivalent to) a gentle algebra. The next result relates geometrically the two corresponding dissected surfaces.

\begin{theorem}\label{thm::construction covering}
Let $(\surf, M,P,X,D)$ be a $\cross$-dissected surface and let $\bar{A}=\bar{A}(\surf,M,P,X,D)$ be the corresponding skew-gentle algebra. Then there exists a $G$-marked surface $(\widetilde{\surf},\widetilde{M},\widetilde{P},\sigma)$ such that:
\begin{enumerate}
\item there exists a $2$-folded cover $p:\widetilde{\surf}\to \surf$ branched in the points in $X$ that induces a diffeomorphism $(\widetilde{\surf}\setminus \widetilde{X})/\sigma\to \surf\setminus X$ where $\widetilde{X}=p^{-1}(X)$ are the points fixed by $\sigma$;
\item $\widetilde{D}:=p^{-1}(D)$ is a $G$-dissection of $(\widetilde{\surf}, \widetilde{M}, \widetilde{P},\sigma)$;
\item there is a $\dual$-isomorphism $\eta(A(\widetilde{D})G)\eta\simeq \bar{A}$, where  $\eta$ is a $\dual$-invariant idempotent of $A(D) G$;
\item there is a $G$-isomorphism $\bar{\eta}(\bar{A}\dual)\bar{\eta}\simeq A(\widetilde{D})$, where $\bar{\eta}$ is a $G$-invariant idempotent of $\bar{A} \dual$.
\end{enumerate}
\end{theorem}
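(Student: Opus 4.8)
The plan is to invert the construction relating $G$-dissections to $\cross$-dissections: from the data of $(\surf,M,P,X,D)$ I will build an explicit branched double cover $p\colon\widetilde{\surf}\to\surf$ whose quotient orbifold recovers $(\surf,M,P,X)$ together with $D$, and then read off the four assertions. The relevant geometric input is that each $\cross\in X$ is the endpoint of a single $\cross$-arc $j_\cross$, so that these arcs behave like branch cuts.

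For assertions (1) and (2): since $\surf$ has nonempty boundary, $\pi_1(\surf\setminus X)$ is free, so there is a homomorphism $\phi\colon\pi_1(\surf\setminus X)\to\Z_2$ sending a small loop around each $\cross\in X$ to the nontrivial element and every boundary loop and loop around a point of $P$ to the trivial one; such $\phi$ exists without obstruction, as the classes of the punctures at $X$ can be prescribed freely in a free group. Let $\widetilde{\surf}^\circ\to\surf\setminus X$ be the associated unbranched double cover, $\widetilde{\surf}$ its branched completion over $X$, $p$ the projection, and $\sigma$ the nontrivial deck transformation. Pulling back the orientation makes $\widetilde{\surf}$ oriented and $\sigma$ orientation-preserving (locally $z\mapsto z^2$ with deck map $z\mapsto -z$), and the fixed-point set of $\sigma$ is exactly $\widetilde X=p^{-1}(X)$; setting $\widetilde M=p^{-1}(M)$ and $\widetilde P=p^{-1}(P)$ gives a $G$-marked surface, which proves (1). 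For (2), every $\rpoint$-arc of $D$ avoids $X$ and is contractible, hence lifts to two disjoint arcs exchanged by $\sigma$, while each $\cross$-arc, having the branch point as an endpoint, lifts to a single $\sigma$-invariant arc $\widetilde\gamma$ with $\sigma(\widetilde\gamma)=\widetilde\gamma^{-1}$ through the corresponding fixed point. Thus $\widetilde D=p^{-1}(D)$ is $\sigma$-invariant, and since no fixed point lies in the interior of a polygon of $\widetilde D$ the action on polygons is free; each polygon of $D$ (with its unique boundary segment) therefore lifts to two polygons with one boundary segment, so $\widetilde D$ is a $G$-dissection.

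Assertion (3) is essentially Proposition \ref{prop::iso AG-Abar} applied to the $G$-dissected surface $(\widetilde{\surf},\widetilde M,\widetilde P,\sigma,\widetilde D)$, whose quotient is $(\surf,M,P,X)$ with $\cross$-dissection $\overline{\widetilde D}=D$; that proposition supplies the idempotent $\eta$ and an algebra isomorphism $\Phi\colon\bar A\to\eta\,(A(\widetilde D)G)\,\eta$. The new point is $\dual$-equivariance. Taking the canonical $\dual$-action on $A(\widetilde D)G$ from Proposition \ref{prop::isomorphismRR} (with $\chi(\sigma)=-1$) and the $\dual$-action on $\bar A$ that swaps the two vertices and arrows attached to each $\cross$-arc, I would verify equivariance on generators from the formulas defining $\Phi$: for example $\chi$ sends $\Phi(\bar e_{i_\epsilon})=e_i\otimes\frac{1+(-1)^\epsilon\sigma}{2}$ to $e_i\otimes\frac{1+(-1)^{\epsilon+1}\sigma}{2}=\Phi(\bar e_{i_{\epsilon+1}})$, matching the interchange of the two $\cross$-vertices, and analogously on the arrows $^{\epsilon}\alpha$, $\alpha^{\epsilon}$ and $^{\epsilon'}\alpha^{\epsilon}$. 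Hence $\Phi$ is a $\dual$-isomorphism.

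For (4) I pass to the double dual, using $\doublehat{G}=G$. Write $\Lambda=A(\widetilde D)$, a basic $G$-algebra. The isomorphism of (3) is a $\dual$-equivariant Morita equivalence between $\bar A$ and $\Lambda G$, so it induces a Morita equivalence $\bar A\dual\sim\Lambda G\dual$; by the remark following Proposition \ref{prop::isomorphismRR}, $\Lambda G\dual\cong\End_\Lambda(\Lambda G)$ is Morita equivalent to $\Lambda$. Hence $A(\widetilde D)=\Lambda$, being basic, is the basic algebra of $\bar A\dual$, so some idempotent $\bar\eta$ realizes an algebra isomorphism $\bar\eta(\bar A\dual)\bar\eta\cong A(\widetilde D)$. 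The main obstacle is to upgrade this to an \emph{explicit $G$-isomorphism} with a genuinely $G$-invariant idempotent $\bar\eta$: the general difficulty of producing such an idempotent was flagged in the remark preceding Section 2.3, so here I would exploit the special skew-gentle structure, defining $\bar\eta$ as a sum of one primitive idempotent per $G$-orbit of the vertices of the gentle quiver $Q(\widetilde D)$ (the fixed vertices being those created by the $\cross$-arcs), exactly dual to the construction of $\eta$, and then checking $G$-equivariance by the computation mirroring that of (3). This equivariant bookkeeping, rather than the topology of the cover, is where the real work lies.
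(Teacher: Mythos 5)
Your proof of assertions (1) and (2) hinges on a cover that in general does not exist, and when it does exist it is in general the wrong cover; this is a genuine gap, not bookkeeping. First, existence: $\pi_1(\surf\setminus X)$ is free, but the loops $\ell_x$ around the points of $X$ and the boundary loops $d_1,\dots,d_b$ are \emph{not} independent -- in the standard presentation their product together with $g$ commutators is trivial, so any $\phi\colon\pi_1(\surf\setminus X)\to\Z_2$ satisfies $\sum_{x\in X}\phi(\ell_x)+\sum_k\phi(d_k)=0$. Hence when $|X|$ is odd you cannot send every $\ell_x$ to the nontrivial element while killing all boundary loops: already for the disc with one orbifold point (the type $\mathbb D_n$ case, Examples \ref{example::Dn} and \ref{example::coveringDn}) your $\phi$ does not exist, and the correct cover is a disc whose boundary circle double-covers the boundary of the base. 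Second, correctness: the cover for which (3) and (4) hold is not determined by $(\surf,M,P,X)$ alone, it depends on $D$, whereas your construction never uses $D$. The paper builds $\widetilde\surf$ by cutting $\surf$ along curves joining a green point on each boundary segment to the $\cross$'s in its polygon and gluing two copies crosswise; the monodromy around a boundary component is then the parity of the number of cut curves ending on it, which is dictated by $D$ and is typically nontrivial. Concretely, for the cylinder with two orbifold points, the dissections $\bar D_3,\bar D_4$ of Figure \ref{figure::example} give a torus with two boundary components, while your prescription always gives the sphere with four holes. Assertion (3) (with the natural $\dual$-action on $\bar A$ fixed before the theorem) cannot hold for both: a $\dual$-isomorphism $\bar A\simeq\eta(A(\widetilde D)G)\eta$ with $\eta$ a $\dual$-invariant full idempotent gives an equivariant Morita equivalence, hence $\bar A\dual$ is Morita equivalent to $(A(\widetilde D)G)\dual$, which by Proposition \ref{prop::isomorphismRR} is Morita equivalent to $A(\widetilde D)$; since $A(\widetilde D)$ is basic this forces $A(\widetilde D)\cong(\bar A\dual)_{\rm b}$, and by Proposition \ref{prop::bijection-gentle-dissection} this determines $\widetilde D$, hence the cover, up to diffeomorphism. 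So with your cover, (3) is simply false in these cases.

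A related soft spot is your equivariance check in (3): the formulas of Proposition \ref{prop::iso AG-Abar} are $\dual$-equivariant only for a choice of orbit representatives with the coherence property that every $+$-labelled arrow has source and target in $Q_0^+\cup Q_0^{\rm fix}$. This is exactly what the paper extracts from the fundamental domain $\surf^+$ of its cut-and-glue construction, and it can fail for other covers or other choices: in Example \ref{example1} no such choice exists, and there $\Phi(d)=d^-\otimes\sigma$ picks up a sign under $\chi$, so the induced action on $\bar A$ is not the natural one. Your sample computation on the idempotents $\bar e_{i_\epsilon}$ works for any choice, but the arrows are where equivariance can break, and that is precisely what the cut construction is designed to fix. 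Your outline of (4) (an explicit $G$-invariant idempotent $\bar\eta$ with one primitive idempotent per $G$-orbit of vertices, dual to $\eta$) does match the paper's strategy, but it presupposes that the correct cover and the equivariant isomorphism of (3) are already in place.
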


\begin{proof}
The construction of the double cover $\widetilde{\surf}$ of $\surf$ is  similar to the construction in \cite[Sections 3.2 and 3.3]{AP}: The $\cross$-dissection cuts the surface $\surf$ into polygons with exactly one side being a boundary segment. Fix a point on each boundary segment that we denote by a green $\gpoint$.  Enumerate the orbifold points by $X= \{X_1, \ldots, X_f \}$. In each polygon containing at least one $\cross$ on its boundary, draw curves $\gamma_i$ from the green point $\gpoint$ to each $X_i$ on its boundary so that the $\gamma_i$'s do not intersect and stay in the interior of the polygon. 

In a first step, we cut the surface $\surf$ along all the curves $\gamma_i$ (see picture below). We obtain a surface $\surf^+$ which is connected since each $\cross$  is adjacent to exactly one arc, hence it is in the boundary of exactly one polygon. In $\surf^+$ the curves $\gamma_i$ are now boundary segments $[P_i^+,Q_i^+]$ containing $X_i$. We take another copy of this new surface, that we call $\surf^-$. The surface $\widetilde{S}$ is defined as the quotient $\surf^+\cup \surf^-/( \Psi_i)$ where $\Psi_i$ is a diffeomorphism sending $[P_{i}^+,Q_{i}^+]$ to $[Q_{i}^-,P_{i}^-]$ (given in the picture below by identifying parallel green sides). Then by an argument similar as Theorem 3.5 in \cite{AP}, the surface $\widetilde{\surf}$ is an oriented smooth surface with boundary. Moreover the diffeomorphism $\sigma:\surf^+\to \surf^-$ induces a diffeomorphism of order $2$ on $\widetilde{S}$ whose fixed points are exactly the $X_i^+=X_i^-$'s.

 \[\scalebox{0.6}{
  \begin{tikzpicture}[scale=1.3,>=stealth]
  
  \node at (2,5) {$\surf$};
  \draw[thick, red, fill=red!10] (0,0)--(0,3)--(2,4)--(4,3)--(4,0)--(0,0);
\node at (0,0) {$\rpoint$};
\node at (0,3) {$\rpoint$};
\node at (2,4) {$\rpoint$};
\node at (4,3) {$\rpoint$};
\node at (4,0) {$\rpoint$};
\node at (2,0) {$\gpoint$};
\node at (1,2) {$\cross$};
\node at (2,2) {$\cross$};
\node at (3,2) {$\cross$};
\node at (2.2,2) {$X_2$};
\node at (1.2,2) {$X_1$};
\node at (3.2,2) {$X_3$};

\draw[thick, red] (0,3)--(1,2);
\draw[thick, red] (2,2)--(2,4)--(3,2);

\draw[thick] (0,0)--(4,0);
\draw[thick, dark-green] (2,0)--node [black, fill=red!10, inner sep=0pt]{$\gamma_1$}(1,2);
\draw[thick, dark-green] (2,0)--node [black, fill=red!10, inner sep=0pt]{$\gamma_2$}(2,2);
\draw[thick, dark-green] (2,0)--node [black, fill=red!10, inner sep=0pt]{$\gamma_3$}(3,2);

\begin{scope}[xshift=5cm]

\node at (2,5) {$\surf^+$};

 \draw[white, fill=red!10] (0,0)--(0,3)--(2,4)--(4,3)--(4,0)--(3,0)--(3,2)--(2.5,0)--(2,2)--(1.5,0)--(1,2)--(1,0)--(0,0);
 
\node at (0,0) {$\rpoint$};
\node at (0,3) {$\rpoint$};
\node at (2,4) {$\rpoint$};
\node at (4,3) {$\rpoint$};
\node at (4,0) {$\rpoint$};
\node at (1,0) {$\gpoint$};
\node at (1.5,0) {$\gpoint$};
\node at (2.5,0) {$\gpoint$};
\node at (3,0) {$\gpoint$};

\node at (2.2,2) {$X_2$};
\node at (1.2,2) {$X_1$};
\node at (3.2,2) {$X_3$};

\node at (1,2) {$\cross$};
\node at (2,2) {$\cross$};
\node at (3,2) {$\cross$};

\draw[thick, red] (0,0)--(0,3)--(2,4)--(4,3)--(4,0);

\draw[thick, red] (0,3)--(1,2);
\draw[thick, red] (2,2)--(2,4)--(3,2);

\draw[thick] (0,0)--(1,0);
\draw[thick] (3,0)--(4,0);

\draw[thick, dark-green] (1,0)--(1,2)--(1.5,0)--(2,2)--(2.5,0)--(3,2)--(3,0);

\node at (0.8,0.2) {$P_1^+$};
\node at (1.5,-0.5) {$P_2^+=P_1^-$};
\node at (2.5,-0.2) {$P_3^+=P_2^-$};
\node at (3.2,0.2) {$P_3^-$};

\end{scope}

\begin{scope}[xshift=12cm, yshift=3cm,scale=0.6]

\node at (2,4) {$\widetilde{\surf}$};
\draw[white, fill=red!10] (0,-1)--(0,2)--(2,3)--(4,2)--(4,-1)--(3,-3)--(1,-3)--(0,-1);

\node at (0,0) {$\rpoint$};
\node at (0,2) {$\rpoint$};
\node at (2,3) {$\rpoint$};
\node at (4,2) {$\rpoint$};
\node at (4,0) {$\rpoint$};

\node at (0,-1) {$\gpoint$};
\node at (1,-3) {$\gpoint$};
\node at (3,-3) {$\gpoint$};
\node at (4,-1) {$\gpoint$};

\draw[thick, red] (0,0)--(0,2)--(2,3)--(4,2)--(4,0);
\draw[thick,dark-green](0,-1)--(1,-3)--(3,-3)--(4,-1);
\draw[thick] (0,0)--(0,-1);
\draw[thick] (4,0)--(4,-1);

\node at (0.5,-2) {$\cross$};
\node at (2,-3) {$\cross$};
\node at (3.5,-2) {$\cross$};
\node at (1,-2) {$X_1$};
\node at (2,-3.5) {$X_2$};
\node at (4,-2) {$X_3$};

\draw[red, thick] (0.5,-2)..controls (1.5,-1.5) and (0.5,1.5)..(0,2);
\draw[red,thick] (2,-3)--(2,3);
\draw[red, thick] (3.5,-2)--(2,3);

\begin{scope}[ xshift=1cm, yshift=-4cm, rotate=180]
\draw[white, fill=red!10] (0,-1)--(0,2)--(2,3)--(4,2)--(4,-1)--(3,-3)--(1,-3)--(0,-1);

\node at (0,0) {$\rpoint$};
\node at (0,2) {$\rpoint$};
\node at (2,3) {$\rpoint$};
\node at (4,2) {$\rpoint$};
\node at (4,0) {$\rpoint$};

\node at (0,-1) {$\gpoint$};
\node at (1,-3) {$\gpoint$};
\node at (3,-3) {$\gpoint$};
\node at (4,-1) {$\gpoint$};

\draw[thick, red] (0,0)--(0,2)--(2,3)--(4,2)--(4,0);
\draw[thick,dark-green](0,-1)--(1,-3)--(3,-3)--(4,-1);
\draw[thick] (0,0)--(0,-1);
\draw[thick] (4,0)--(4,-1);

\node at (0.5,-2) {$\cross$};
\node at (2,-3) {$\cross$};
\node at (3.5,-2) {$\cross$};
\node at (2,-3.5) {$X_2$};
\node at (4,-2) {$X_3$};

\draw[red, thick] (0.5,-2)..controls (1.5,-1.5) and (0.5,1.5)..(0,2);
\draw[red,thick] (2,-3)--(2,3);
\draw[red, thick] (3.5,-2)--(2,3);

\end{scope}
 \end{scope}   
  
  \end{tikzpicture}}\]

\medskip

We now prove that $p^-{1}(D)$ is a $G$-dissection.  First note that the $\rpoint$-arcs cut the surface $\surf^+$ into polygons, each of which has exactly one boundary side which is the concatenation of one half of a boundary segment, several green segments, and one half boundary segment (see picture above). If $n$ is the number of $\cross$'s in its boundary, then this polygon is cut into $n+1$-polygons $\{\mathcal{P}_0^+,\ldots,\mathcal{P}_n^+\}$ by the $\cross$-arcs.  The polygons $\mathcal{P}_i^+$ contain exactly one boundary segment. After gluing $\surf^+$ with $\surf^-$ along the green boundaries, we obtain that $\mathcal{P}_0^+$ is glued to $\mathcal{P}_1^-$ along one green boundary, $\mathcal{P}_i^+$ is glued to $\mathcal{P}_{i-1}^-$ along exactly one green segment, and to $\mathcal{P}_{i+1}^-$ along the other. Finally we obtain that the red arcs cut the surface $\widetilde{S}$ into polygons of the form $$\mathcal{P}_0^+\cup\mathcal{P}_1^-\cup \mathcal{P}_2^+\cup \ldots \mathcal{P}_n^{\pm}$$
which are polygons that contain exactly one boundary segment which is the gluing of the boundary of $\mathcal{P}_0^+$ with the boundary of $\mathcal{P}_n^{\pm}$ (see picture below). 

 \[\scalebox{0.6}{
  \begin{tikzpicture}[scale=0.8,>=stealth]

\draw[white, fill=red!10] (0,-1)--(0,2)--(2,3)--(4,2)--(4,-1)--(3,-3)--(1,-3)--(0,-1);

\draw[white, fill=gray!50] (0,-1)--(0,2)..controls (0.5,1.5) and (1.5,-1.5)..(0.5,-2)--(0,-1);
\node at (0.5,0) {$\mathcal{P}^+_0$};
\draw[white, fill=gray!50] (2,-3)--(2,3)--(3.5,-2)--(3,-3)--(2,-3);
\node at (3,-2) {$\mathcal{P}^+_2$};

\node at (0,0) {$\rpoint$};
\node at (0,2) {$\rpoint$};
\node at (2,3) {$\rpoint$};
\node at (4,2) {$\rpoint$};
\node at (4,0) {$\rpoint$};

\node at (0,-1) {$\gpoint$};
\node at (1,-3) {$\gpoint$};
\node at (3,-3) {$\gpoint$};
\node at (4,-1) {$\gpoint$};

\draw[thick, red] (0,0)--(0,2)--(2,3)--(4,2)--(4,0);
\draw[thick,dark-green](0,-1)--(1,-3)--(3,-3)--(4,-1);
\draw[thick] (0,0)--(0,-1);
\draw[thick] (4,0)--(4,-1);

\node at (0.5,-2) {$\cross$};
\node at (2,-3) {$\cross$};
\node at (3.5,-2) {$\cross$};

\draw[red, thick] (0.5,-2)..controls (1.5,-1.5) and (0.5,1.5)..(0,2);
\draw[red,thick] (2,-3)--(2,3);
\draw[red, thick] (3.5,-2)--(2,3);

\begin{scope}[ xshift=1cm, yshift=-4cm, rotate=180]
\draw[white, fill=red!10] (0,-1)--(0,2)--(2,3)--(4,2)--(4,-1)--(3,-3)--(1,-3)--(0,-1);

\draw[white, fill=gray!50] (0.5,-2)--(1,-3)--(2,-3)--(2,3)--(0,2)..controls (0.5,1.5) and (1.5,-1.5)..(0.5,-2);

\node at (1,2) {$\mathcal{P}^-_1$};

\draw[white, fill=gray!50] (2,3)--(4,2)--(4,-1)--(3.5,-2)--(2,3);

\node at (3.5,1) {$\mathcal{P}^-_3$};

\node at (0,0) {$\rpoint$};
\node at (0,2) {$\rpoint$};
\node at (2,3) {$\rpoint$};
\node at (4,2) {$\rpoint$};
\node at (4,0) {$\rpoint$};

\node at (0,-1) {$\gpoint$};
\node at (1,-3) {$\gpoint$};
\node at (3,-3) {$\gpoint$};
\node at (4,-1) {$\gpoint$};

\draw[thick, red] (0,0)--(0,2)--(2,3)--(4,2)--(4,0);
\draw[thick,dark-green](0,-1)--(1,-3)--(3,-3)--(4,-1);
\draw[thick] (0,0)--(0,-1);
\draw[thick] (4,0)--(4,-1);

\node at (0.5,-2) {$\cross$};
\node at (2,-3) {$\cross$};
\node at (3.5,-2) {$\cross$};

\draw[red, thick] (0.5,-2)..controls (1.5,-1.5) and (0.5,1.5)..(0,2);
\draw[red,thick] (2,-3)--(2,3);
\draw[red, thick] (3.5,-2)--(2,3);

\end{scope}

  \end{tikzpicture}}\]

\medskip

To prove assertion (3), we apply Proposition~\ref{prop::iso AG-Abar} for a particular choice of idempotent (that is a particular choice of orbits) that comes from the construction of $\widetilde{S}$. The only thing to check is that the isomorphism constructed in the proof of Proposition~\ref{prop::iso AG-Abar} is a $\dual$-isomorphism for this particular choice of idempotent $\eta$. 

Given a point in $\widetilde{M}$ or in $\widetilde{P}$, it is either in $\surf^+$ or in $\surf^-$ but not on both. Therefore, we choose the orbits $\widetilde{M}^+\cup \widetilde{M}^-$ accordingly. Now if an arc in $\widetilde{D}$  is not fixed by $\sigma$, then it is either entirely in $\surf^+$, or entirely in $\surf^-$. Hence, there is a natural partition $Q_0(\widetilde{D})=Q_0^+\cup Q_0^-\cup Q_0^{\rm fix}$. With this choice of orbits, we have the following property (that may fail for any other choice of orbits, see example \ref{example1}):

\emph{For each arrow $\alpha^+\in Q_1(\widetilde{D}^+)=Q_1^+$, the source and the target of $\alpha^+$ are in $Q_0^+\cup Q_0^{\rm fix}$.}

 Hence in this special setup, the map $\Phi:\bar{A}(D)\to \eta A(\widetilde{D}) G \eta$ defined in the proof of Proposition \ref{prop::iso AG-Abar} becomes: 
\[ \begin{array}{rcll}
\Phi (\bar{e}_i) &= &e_{i^+}\smallten 1 & \\
\Phi(\bar{e}_{i_{\epsilon}}) &=&e_i\smallten \frac{1+(-1)^\epsilon \sigma}{2}   & \\
\Phi(\alpha) &=&\alpha^+\smallten 1 & {\rm for}\ \alpha:i\to j ;\\
\Phi(\alpha^{\epsilon} ) &= &\alpha^+\smallten \frac{1+(-1)^\epsilon \sigma}{2} & \textrm{ for }\alpha^{\epsilon}  :i_\epsilon\to j\\

\Phi(^{\epsilon} \alpha ) &=&\frac{1}{2}(\alpha^+\smallten 1+(-1)^{\epsilon}\alpha^-\smallten \sigma)\small & \textrm{ for }^{\epsilon} \alpha :i\to j_\epsilon\\

\Phi(^{\epsilon'}\alpha^{\epsilon} ) &=&(e_{j}\smallten \frac{1+(-1)^{\epsilon'} \sigma}{2})(\alpha^+\smallten 1)(e_{i}\smallten\frac{1+(-1)^{\epsilon}\sigma}{2} ) & \textrm{ for }^{\epsilon'}\alpha^\epsilon :i_{\epsilon}\to j_{\epsilon'}
\end{array}\]

Recall that the action of $\chi\in \dual$ on the skew group algebra $\Lambda G$ is given by $\chi(\lambda\smallten g):=\chi(g) \lambda\smallten g$. Hence the idempotent 
\[\eta:= \sum_{i^+\in Q_0^+}e_{i^+}\smallten 1 + \sum_{j\in Q_0^{\rm fix}}e_j\smallten 1\]
is $\dual$-invariant. And one immediately checks that $\chi$ acts as follows on the quiver of $\eta A(\widetilde{D})G\eta$:

\[ \begin{array}{rcll}
\chi (e_i) &= &e_i &\\
\chi(e_{i_{\epsilon}}) &=&e_{i_{\epsilon+1}}   & \textrm{ for }\epsilon\in\mathbb Z_2 \\
\chi(\alpha) &=&\alpha & \textrm{  for}\ \alpha:i\to j ;\\
\chi(^{\epsilon} \alpha) &=& \ ^{\epsilon+1}\alpha & \textrm{ for }^\epsilon\alpha :i\to j_{\epsilon}\\
\chi(\alpha^{\epsilon} ) &=&\alpha^{\epsilon+1}& \textrm{ for }\alpha^\epsilon :i_{\epsilon}\to j\\

\chi(^{\epsilon'}\alpha^{\epsilon} ) &=&\ ^{\epsilon'+1}\alpha^{\epsilon+1} & \textrm{ for }^{\epsilon'}\alpha^\epsilon :i_{\epsilon}\to j_{\epsilon'}
\end{array}\]

Therefore, the isomorphism $\Phi$ constructed in Proposition \ref{prop::iso AG-Abar} is a $\dual$-isomorphism for this special choice of orbits.

\medskip

Combining (3) with Proposition \ref{prop::isomorphismRR}, we know that the  algebras $(\bar{A}\dual)_{\rm b}$ and $A(\widetilde{D})$ are isomorphic. But to prove (4) we need here a $G$-isomorphism. We construct it explicitly, defining $\bar{\eta}\in \bar{A}\dual$ as the following idempotent:
$$\bar{\eta}:=\sum_{i, \rpoint-{\rm arc}}\bar{e}_i\smallten 1+ \sum_{j,  \cross-{\rm arc}}\bar{e}_{j_0}\smallten 1. $$ We now construct a  morphism $\Psi: kQ(\widetilde{D})\to \bar{\eta} \bar{A}\dual\bar{\eta}$ as follows:

\[\begin{array}{rcll}
\Psi(e_{i^\pm}) &= & \bar{e}_i\smallten \frac{1\pm \chi}{2} &\\
\Psi (e_i) & = & \bar{e}_{i_0}\smallten 1 & \\
\Psi(\alpha^\pm) &=& \bar{\alpha} \smallten \frac{1\pm\chi}{2} & \textrm{for }\alpha^\pm:i^\pm\to j^\pm\\
\Psi(\alpha^\pm) &=& ^0\bar{\alpha}\smallten \frac{1\pm\chi}{2} & \textrm{for }\alpha^\pm:i^\pm\to j\\
\Psi( \alpha^\pm) &=& \frac{1}{2}(\bar{\alpha}^0\smallten 1\pm \bar{\alpha}^1\smallten \chi) & \textrm{for }\alpha^\pm:i\to j^\pm\\
\Psi(\alpha^{\pm}) & = & \frac{1}{2} (^0\bar{\alpha}^0\smallten 1\pm ^0\bar{\alpha}^1\smallten \chi) & \textrm{for }\alpha^{\pm}:i\to j.
\end{array}\]

Checking the relations is then an immediate computation. For example, assume that $\alpha^\pm:i\to j$ and $\beta^\pm:j\to k$ are double arrows in $A(\widetilde{D})$, then we compute 
\begin{align*}
\Psi(\beta^-\alpha^-)= \frac{1}{2}(^0\bar{\beta}^0\smallten 1-\ ^0\bar{\beta}^1\smallten \chi)\frac{1}{2}(^0\bar{\alpha}^0\smallten 1-\ ^0\bar{\alpha}^1\smallten \chi)\\
 =\frac{1}{4}( (^0\bar{\beta}^0 \ ^0\bar{\alpha}^0+^0\bar{\beta}^1 \ ^1\bar{\alpha}^0)\smallten 1 - ( ^0\bar{\beta}^0 \ ^0\bar{\alpha}^1+^0\bar{\beta}^1 \ ^1\bar{\alpha}^1)\smallten \chi)) =0
\end{align*}
while 
 \begin{align*}
\beta^-\alpha^+= \frac{1}{2}(^0\bar{\beta}^0\smallten 1-\ ^0\bar{\beta}^1\smallten \chi)\frac{1}{2}(^0\bar{\alpha}^0\smallten 1+\ ^0\bar{\alpha}^1\smallten \chi)\\
 =\frac{1}{4}( (^0\bar{\beta}^0 \ ^0\bar{\alpha}^0-^0\bar{\beta}^1 \ ^1\bar{\alpha}^0)\smallten 1) + (^0\bar{\beta}^0 \ ^0\bar{\alpha}^1-^1\bar{\beta}^1 \ ^1\bar{\alpha}^1)\smallten \chi)) \neq 0
\end{align*}

Therefore we obtain an isomorphism $\bar{\eta}\bar{A}\dual\bar{\eta}\simeq A(\widetilde{D})$. The action of $G$ on $\bar{A}\dual$ is given by $g(a\smallten \chi):=\chi(g) a\smallten \chi$, hence the idempotent $\bar{\eta}$ is clearly $G$-invariant. It is straightforward to check that the isomorphism constructed above commutes with the action of $G$. 
\end{proof}

A consequence of this double isomorphism (3) and (4) is the fact that we can apply Corollary \ref{cor::Geq=G-hat-eq}. Therefore we obtain the following.

\begin{corollary}\label{cor::Geq=G-hat-eq-gentle} Let $\bar{A}$ and $\bar{A}'$ be two skew-gentle algebras. Denote by $A$ and $A'$ the corresponding gentle $G$-algebras described in Theorem \ref{thm::construction covering}. Then the following are equivalent:

$$\mathcal{D}^b(\bar{A})\underset{\dual}{\sim}\mathcal{D}^b(\bar{A}')\Leftrightarrow \mathcal{D}^b(A)\underset{G}{\sim}\mathcal{D}^b(A').$$
\end{corollary}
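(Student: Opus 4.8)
The plan is to deduce the statement from Corollary~\ref{cor::Geq=G-hat-eq} applied to the pair of $G$-algebras $\Lambda = A$ and $\Lambda' = A'$, using the two isomorphisms (3) and (4) of Theorem~\ref{thm::construction covering} to translate between the skew-group algebras $AG$, $A'G$ and the skew-gentle algebras $\bar{A}$, $\bar{A}'$. Concretely, Corollary~\ref{cor::Geq=G-hat-eq} will give
$$\mathcal{D}^b(A)\underset{G}{\sim}\mathcal{D}^b(A') \Leftrightarrow \mathcal{D}^b(AG)\underset{\dual}{\sim}\mathcal{D}^b(A'G),$$
and it then remains to replace $AG$ by $\bar{A}$ (and $A'G$ by $\bar{A}'$) on the right-hand side. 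Since part (3) of Theorem~\ref{thm::construction covering} provides a $\dual$-isomorphism $\eta(AG)\eta \simeq \bar{A}$ with $\eta$ a $\dual$-invariant idempotent giving the basic algebra of $AG$ (hence full), the projective object $T = (AG)\eta$ is a $\dual$-invariant tilting object of $\mathcal{D}^b(AG)$ whose endomorphism algebra is $\dual$-isomorphic to $\bar A$; this is the equivariant form of Example~\ref{example::G-invariant}(1) applied to the corner $\eta(AG)\eta$. Hence $\mathcal{D}^b(AG)\underset{\dual}{\sim}\mathcal{D}^b(\bar A)$, and likewise $\mathcal{D}^b(A'G)\underset{\dual}{\sim}\mathcal{D}^b(\bar A')$. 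Using that $\underset{\dual}{\sim}$ is symmetric and transitive, these two Morita translations let us pass freely between $\mathcal D^b(AG)\underset{\dual}{\sim}\mathcal D^b(A'G)$ and $\mathcal D^b(\bar A)\underset{\dual}{\sim}\mathcal D^b(\bar A')$, which yields the desired equivalence.

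The one hypothesis of Corollary~\ref{cor::Geq=G-hat-eq} that must be verified is the existence of a $G$-invariant idempotent $\theta \in AG\dual$ with a $G$-isomorphism $A \simeq \theta(AG\dual)\theta$ (and similarly $\theta'$ for $A'$). This is exactly where part (4) enters, combined with the fact that forming the skew-group algebra commutes with passing to a corner by an invariant idempotent. Since $\eta$ is $\dual$-invariant, a direct computation shows $(\eta \smallten 1)(AG\dual)(\eta \smallten 1) \cong (\eta(AG)\eta)\dual$ as $\doublehat{G} = G$-algebras, and by part (3) the right-hand side is $G$-isomorphic to $\bar A\dual$. Transporting the $G$-invariant idempotent $\bar\eta \in \bar A\dual$ of part (4) through this identification produces a $G$-invariant idempotent $\theta \leq \eta\smallten 1$ of $AG\dual$ with
$$\theta(AG\dual)\theta \;\cong\; \bar\eta(\bar A\dual)\bar\eta \;\cong\; A$$
as $G$-algebras, the last isomorphism being precisely part (4); here one uses $\theta(AG\dual)\theta = \theta\,[(\eta\smallten 1)(AG\dual)(\eta\smallten 1)]\,\theta$ because $\theta \leq \eta\smallten 1$. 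The same construction applied to $A'$ gives $\theta'$, so the hypotheses of Corollary~\ref{cor::Geq=G-hat-eq} are met.

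I expect the \emph{main obstacle} to be the verification of this idempotent condition, i.e. keeping careful track of the two group actions through the chain of isomorphisms. The subtle points are (i) that the skew-group functor $(-)\dual$ sends the $\dual$-equivariant corner $\eta(AG)\eta$ of part (3) to the corner $(\eta\smallten 1)(AG\dual)(\eta\smallten 1)$ of $AG\dual$, and that this identification is $G$-equivariant; and (ii) that the resulting $\theta$ is genuinely $G$-invariant. Once these bookkeeping points are settled, the remaining ingredients — Morita equivalence being a special case of equivariant derived equivalence, and $\underset{\dual}{\sim}$ being an equivalence relation — are routine, and the statement follows by assembling Corollary~\ref{cor::Geq=G-hat-eq} with the two Morita translations.
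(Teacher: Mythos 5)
Your proposal is correct and follows the paper's own route: the paper's entire proof of this corollary is the one-line observation that the double isomorphism (3)--(4) of Theorem \ref{thm::construction covering} makes Corollary \ref{cor::Geq=G-hat-eq} applicable to $\Lambda = A$, $\Lambda' = A'$, which is precisely your argument. The two bookkeeping steps you elaborate --- transporting $\bar{\eta}$ through $(\eta\smallten 1)(AG\dual)(\eta\smallten 1) \cong \bar{A}\dual$ to get the $G$-invariant idempotent $\theta \le \eta\smallten 1$ required by Corollary \ref{cor::Geq=G-hat-eq}, and passing between $AG$, $A'G$ and their $\dual$-equivariant basic corners $\bar{A}$, $\bar{A}'$ --- are exactly the details the paper leaves implicit in the phrase ``we can apply Corollary \ref{cor::Geq=G-hat-eq}''.
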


\begin{example}
One easily checks that starting with the $\mathbb D_n$ or $\widetilde{\mathbb{D}}_n$ given in Examples \ref{example::Dn}, one obtains the cover given in Examples \ref{example::coveringDn}. More generally, if $(\surf, M,X)$ is a disc with $|M|=1$ and $|X|=n$, then for any $\cross$-dissection $D$, the corresponding $G$-cover is a surface of genus $g=[\frac{n-2}{2}]$ and with one or two boundary components depending on the parity of $n$.
\end{example}

\begin{example}\label{example::cylinder}
Let $(\surf,M,P,X,D)$ be given as in Example \ref{example1}, and $\bar{A}$ the corresponding skew-gentle algebra. 
\[\scalebox{0.8}{
  \begin{tikzpicture}[scale=1,>=stealth]
 
 \draw (0,0)--(5,0)--(5,2)--(0,2)--(0,0);
\node at (0,0) {$\rpoint$};
\node at (5,0)  {$\rpoint$};
\node at (5,2) {$\rpoint$};
\node at (0,2) {$\rpoint$};
\node at (2,1) {$\cross$};
\node at (3,1) {$\cross$};

\node at (2.5,0) {$\gpoint$};

\draw[thick, red] (0,0)--node [black, xshift=-0.2cm]{$1$} (0,2);
\draw[thick, red] (5,0)--node [black, xshift=0.2cm]{$1$} (5,2);
\draw[thick, red] (5,2)--node [black,fill=white, inner sep=0pt]{$3$} (2,1);
\draw[thick, red] (5,2)--node [black,fill=white, inner sep=0pt]{$2$} (3,1);
\draw[thick, red] (0,0)..controls (2,2) and (4,1.8)..node [black,fill=white, inner sep=0pt]{$4$} (5,2);

\draw[thick, dark-green] (2.5,0)--(2,1);
\draw[thick, dark-green] (2.5,0)--(3,1);

\begin{scope}[xshift=6cm, yshift=1cm]

\node (1) at (1,0) {$1$};
\node  (20) at (2,1) {$2_0$};
\node (30) at (4,1) {$3_0$};
\node  (21) at (2,-1) {$2_1$};
\node (31) at (4,-1) {$3_1$};
\node (4) at (5,0) {$4$};

\draw[->] (1)--(20);
\draw[->] (1)--(21);
\draw[->] (20)--(30);
\draw[->] (20)--(31);
\draw[->] (21)--(30);
\draw[->] (21)--(31);
\draw[->] (30)--(4);
\draw[->] (31)--(4);
\draw[->] (1)--(4);

\end{scope}

\end{tikzpicture}}\]

Then the surface $\surf^+$ is as follows.

\[\scalebox{0.8}{
  \begin{tikzpicture}[scale=0.7,>=stealth]
  
  \draw (0,0)--(4,0)--(4,4)--(0,4)--(0,0);
  \node at (0,0) {$\gpoint$};
  \node at (4,0) {$\gpoint$};
  \node at (2,0) {$\gpoint$};
  \node at (0,2) {$\rpoint$};
  \node at (0,4) {$\rpoint$};
  \node at (4,4) {$\rpoint$};
  \node at (4,2) {$\rpoint$};
  \node at (1,0) {$\cross$};
  \node at (3,0) {$\cross$}; 
  
  \draw[thick, dark-green] (0,0)--(4,0);
  \draw[thick, red] (0,4)--node [black,fill=white, inner sep=0pt]{$1^+$}(0,2)--node [black,fill=white, inner sep=0pt]{$4^+$}(4,4)--node [black,fill=white, inner sep=0pt]{$1^+$}(4,2);
  \draw[thick, red] (1,0)--node [black,fill=white, inner sep=0pt]{$3^+$}(4,4);
  \draw[thick, red] (3,0)--node [black,fill=white, inner sep=0pt]{$2^+$}(4,4);
  \end{tikzpicture}}\]

Hence the dissected surface $(\widetilde{\surf},\widetilde{M},\widetilde{D})$ is as follows:

\[\scalebox{0.8}{
  \begin{tikzpicture}[scale=0.7,>=stealth]
  
  \draw (4,0)--(4,4)--(0,4)--(0,0);
  \node at (0,0) {$\gpoint$};
  \node at (4,0) {$\gpoint$};
  \node at (0,-2) {$\gpoint$};
  \node at (0,2) {$\rpoint$};
  \node at (0,4) {$\rpoint$};
  \node at (4,4) {$\rpoint$};
  \node at (4,2) {$\rpoint$};
  \node at (0,-1) {$\cross$};
  \node at (2,-1) {$\cross$}; 
  
  \draw[thick, dark-green] (0,0)--(0,-2)--(4,0);
  \draw[thick, red] (0,4)--node [black,fill=white, inner sep=0pt]{$1^+$}(0,2)--node [black,fill=white, inner sep=0pt]{$4^+$}(4,4)--node [black,fill=white, inner sep=0pt]{$1^+$}(4,2);
  \draw[thick, red] (0,-1)--node [black,fill=white, inner sep=0pt]{$3$}(4,4);
  \draw[thick, red] (2,-1)--node [black,fill=white, inner sep=0pt]{$2$}(4,4);
  
  \begin{scope}[rotate=180, xshift=-4cm, yshift=2cm]
   \draw (4,0)--(4,4)--(0,4)--(0,0);
  \node at (0,0) {$\gpoint$};
  \node at (4,0) {$\gpoint$};
  \node at (0,-2) {$\gpoint$};
  \node at (0,2) {$\rpoint$};
  \node at (0,4) {$\rpoint$};
  \node at (4,4) {$\rpoint$};
  \node at (4,2) {$\rpoint$};
  \node at (0,-1) {$\cross$};
  \node at (2,-1) {$\cross$}; 
  
  \draw[thick, dark-green] (0,0)--(0,-2)--(4,0);
  \draw[thick, red] (0,4)--node [black,fill=white, inner sep=0pt]{$1^-$}(0,2)--node [black,fill=white, inner sep=0pt]{$4^-$}(4,4)--node [black,fill=white, inner sep=0pt]{$1^-$}(4,2);
  \draw[thick, red] (0,-1)--node [black,fill=white, inner sep=0pt]{$3$}(4,4);
  \draw[thick, red] (2,-1)--node [black,fill=white, inner sep=0pt]{$2$}(4,4);
  \end{scope}
  \end{tikzpicture}}\]
  where the two external green segments are identified. One easily checks that it is a sphere with four holes. 
 
The corresponding gentle pair is given by  
 \[\scalebox{0.6}{
  \begin{tikzpicture}[scale=0.7,>=stealth]
  \node at (-1,0) {$Q=$};
 \node (1+) at (0,2) {$1^+$};
 \node (1-) at (0,-2) {$1^-$};
 \node (2) at (2,0) {$2$};
 \node (3) at (5,0) {$3$};
 \node (4+) at (7,2) {$4^+$};
 \node (4-) at (7,-2) {$4^-$};
 \draw[thick,->] (1+)--node [fill=white, inner sep=0pt]{$a^+$} (2);
 \draw[thick, ->] (1-)--node [fill=white, inner sep=0pt]{$a^-$}(2);
 \draw[thick,->] (2.3,0.2)--node [fill=white, inner sep=0pt]{$b^+$} (4.7,0.2);
 \draw[thick,->] (2.3,-0.2)--node [fill=white, inner sep=0pt, xshift=0.2cm]{$b^-$} (4.7,-0.2);
 \draw[thick,->] (3)--node [fill=white, inner sep=0pt, xshift=-0.2cm]{$c^+$} (4+);
 \draw[thick, ->] (3)--node [fill=white, inner sep=0pt]{$c^-$} (4-);
 \draw[thick, ->] (1+)--node [fill=white, inner sep=0pt]{$d^+$} (4+);
 \draw[thick, ->] (1-)--node [fill=white, inner sep=0pt]{$d^-$} (4-);
 
 \node at (12,0) {$I=\{b^+a^+, b^-a^-,c^+b^+,c^-b^-\}$};

  \end{tikzpicture}}\] 
  
  Note that here, the cover and the gentle pair are different from the one in Example \ref{example1}. 
  
 The map $\Phi:\bar{A}\to \eta AG\eta$ constructed in  Proposition \ref{prop::iso AG-Abar} sends the arrow $d$ to $(e_{4^+}\smallten 1) ((d^++d^-)\smallten (1+\sigma))(e_{1^+}\smallten 1)$. 
 In the cover of Example \ref{example1}, the arrow $d^+$ is $1^+\to 4^-$, thus we have $\Phi(d)=d^-\smallten\sigma$, while in the above example we have $\Phi(d)=d^+\smallten 1$, since $d^+:1^+\to 4^+$.
Therefore, in the cover given in Example \ref{example1}, the $\dual$-action on $\bar{A}$ induced by $\Phi$ and the action of $\dual$ on $AG$ sends $d$ to $-d$, since we have $\chi (d^-\smallten \sigma)=\chi(\sigma) (d^-\smallten \sigma)$.  
  
\end{example}

\section{Derived equivalence for skew-gentle algebras}

\subsection{Tilting objects in $\mathcal{D}^b(A)$}

In this subsection, we recall results from  \cite{APS} that are essential in this paper. We associate to any gentle algebra a marked surface with a line field on it.  The idea of associating a line field to a gentle algebra goes back to \cite{HKK} (see also \cite{LP}). Note that the line field described here is the one defined in \cite{APS}, and is slightly different from the one used in \cite{LP}.

\subsubsection{Line fields and graded arcs}
Let $(\surf,M_\rpoint,P_\rpoint, D)$ be a $\rpoint$-dissected surface, and $A$ the corresponding gentle algebra. We define a line field $\eta_D$ on $\surf\setminus (\partial S\cup P)$, that is, a section of the projectivized tangent bundle $\mathbb P(TS)\to \surf $. The line field is tangent along each arc of $D$ and is defined up to homotopy in each polygon cut out by $D$ by the following foliation:

\[\scalebox{1}{
\begin{tikzpicture}[>=stealth,scale=0.6]
\draw[thick, red] (0,0)--(-1,2)--(2,3)--(5,2)--(4,0);
\draw[thick](4,0)--(0,0);
\draw[red,thick,fill=red] (0,0) circle (0.15);
\draw[red,thick,fill=red] (-1,2) circle (0.15);
\draw[red,thick,fill=red] (2,3) circle (0.15);
\draw[red,thick,fill=red] (5,2) circle (0.15);
\draw[red,thick,fill=red] (4,0) circle (0.15);
\draw[dark-green,thick,fill=white] (2,0) circle (0.15);

\draw[thick,red](0.5,0)..controls (0,1) and (-0.5,1.5)..(0,2).. controls (0.5,2.2) and (1,2.5)..(2,2.5)..controls (3,2.5) and   (3.5,2.2)..(4,2)..controls (4.5,1.5) and  (4,1)..(3.5,0);

\draw[thick,red] (1,0)..controls (0.5,0.8) and (1,2)..(2,2).. controls (3,2) and (3.5,0.8)..(3,0);

\draw[thick, red] (1.5,0)..controls (1.5,1) and (2.5,1)..(2.5,0);

\end{tikzpicture}}\]

For a smooth curve $\gamma$ intersecting transversally the line field $\eta$ at its endpoints we denote by $w_{\eta}(\gamma)$ or $w_D(\gamma)$ its winding number with respect to the line field $\eta$. It is a well defined map on the regular homotopy class of $\gamma$, see \cite{APS} for details.

We fix a finite set of green points $M_{\gpoint}$ on the boundary of $\surf$ such that each boundary segment contains exactly one point in $M_{\gpoint}$. An \emph{$\gpoint$-arc} is a curve $\gamma:[0,1]\to \surf$ such that $\gamma_{|_{(0,1)}}$ is injective and in $\surf\setminus(\partial \surf\cup P)$, and such that $\gamma(0)$ and $\gamma(1)$ belong to $M_{\gpoint}$. Arcs are considered up to isotopy fixing the endpoints. Hence each $\gpoint$-arc can be assumed to intersect minimally and transversally the $\rpoint$-dissection $D$. 

A \emph{graded $\gpoint$-arc} is a pair $(\gamma,\mathbf{n})$ where $\gamma$ is a $\gpoint$-arc, and $\mathbf{n}$ is map $\mathbf{n}:\gamma(0,1)\cap D\to \mathbb Z$ satisfying:
$$\mathbf{n}(\gamma(t_{i+1}))=\mathbf{n}(\gamma(t_i))+w_{\eta}(\gamma_{|_{[t_i,t_{i+1}]}}),$$
if $\gamma(t_i)$ and $\gamma(t_{i+1})$ are two consecutive intersections of $\gamma$ with $D$. More concretely, on $[t_i,t_{i+1}]$, the curve $\gamma$ intersects one polygon cut out by $D$, and we have 
$$\mathbf{n}(\gamma(t_{i+1}))=\mathbf{n}(\gamma(t_i))+ 1$$ if the boundary segment the polygon is on the left of the curve $\gamma_{|_{[t_i,t_{i+1}]}}$, and 
$$\mathbf{n}(\gamma(t_{i+1}))=\mathbf{n}(\gamma(t_i))+ 1$$
if the boundary segment lies on the right.

To a graded $\gpoint$-arc $(\gamma,\grading)$, one can associate an object denoted $P_{(\gamma,\grading)}$ in the category $\mathcal{D}^b(A)$. Denote by $t_1<t_2<\cdots<t_r\in (0,1)$ the parameters such that the $\gamma(t_j)$ are the intersection points of $\gamma$ with the dissection $D$. Denote by $i_1,\ldots, i_r$ the corresponding arcs of $D$. For $j=1,\ldots, r-1$ one can associate a path $p_j(\gamma)$ of the quiver $Q(D)$ as in the following picture.

\[\scalebox{1}{
\begin{tikzpicture}[>=stealth,scale=0.6]

\draw[red, thick] (2,-2)--node (A1){} (0,0)--node (A2){}(0,3)--node (A3){}(2,5)--node (A4){}(5,5)--node (A5){}(7,3)--node (A6){}(7,0)--node (A7){}(5,-2);
\draw[thick] (2,-2)--(5,-2);
\node at (3.5,-2) {$\gpoint$};
\node at (0,0) {$\rpoint$};
\node at (0,3) {$\rpoint$};
\node at (2,5) {$\rpoint$};
\node at (5,5) {$\rpoint$};
\node at (7,3) {$\rpoint$};
\node at (7,0) {$\rpoint$};
\node at (5,-2) {$\rpoint$};
\node at (2,-2) {$\rpoint$};

\draw[->,very thick, blue,  loosely dotted] (A7)--(A6);
\draw[->,very thick, blue,  loosely dotted] (A6)--(A5);
\draw[->,very thick, blue] (A5)--(A4);
\draw[->,very thick, blue] (A4)--(A3);
\draw[->,very thick, blue] (A3)--(A2);
\draw[->,very thick, blue, loosely dotted] (A2)--(A1);

\draw[thick, dark-green,->] (-1,1.5)..controls (0,1.5) and (6,4)..(7,4.5);

\node[red] at (-0.5,1) {$i_j$};
\node[red] at (7,4) {$i_{j+1}$};

\node[blue] at (2.5,3.5) {$p_j(\gamma)$};
\node[dark-green] at (7.5,4.5) {$\gamma$};

\end{tikzpicture}}\]

As a graded $A$-module, $P_{(\gamma,\grading)}$ is defined to be $$P_{(\gamma,\grading)}:=\bigoplus_{j=1}^r e_{i_j} A[\grading(\gamma(t_j)].$$
The differential is given by the following $r\times r$ matrix $(d_{(k,\ell)})_{k,\ell}$
\begin{itemize} 
\item if $ w_{\eta}(\gamma_{|_{(t_j,t_{j+1})}})=+1$, then  $d_{(j+1,j)}=p_j(\gamma)[\grading(\gamma(t_j))]$ 

\item if $w_{\eta}(\gamma_{|_{(t_j,t_{j+1})}})=-1$, then  $d_{(j,j+1)}=p_j(\gamma)[\grading(\gamma(t_{j+1}))]$
\item all other values of  $d_{(k,\ell)}$ are $0$. 
\end{itemize}

Moreover we have $P_{(\gamma,\grading)}\simeq P_{(\gamma',\grading')}$ if and only if $\gamma=\gamma'$ (up to isotopy) and $\grading=\grading'$, or $\gamma^{-1}=\gamma'$ and $\grading=\grading'$.

\subsubsection{Tilting objects as $\gpoint$-dissections}

\begin{definition} A $\gpoint$-dissection is a collection  $\{\gamma_i, i\in I\}$ of $\gpoint$-arcs  cutting  the surface $\surf$ into polygons that have 
\begin{itemize}
\item either exactly one $\rpoint$ on its boundary and no $\rpoint$ in its interior, 
\item or no $\rpoint$ on its boundary and exactly one $\rpoint$ in its interior. 
\end{itemize}

\end{definition}

There is a duality between $\rpoint$-dissections and $\gpoint$-dissections. More precisely, for each $\rpoint$-dissection there exists a unique $\gpoint$-dissection such that each $\gpoint$-arc intersects exactly one $\rpoint$-arc and vice versa. 

\[\scalebox{1}{
\begin{tikzpicture}[>=stealth,scale=0.6]
\draw (0,0) circle (3);

\node at (-3,0) {$\rpoint$};
\node at (0,3) {$\rpoint$};
\node at (3,0) {$\rpoint$};
\node at (0,-3) {$\rpoint$};
\node at (0,1.5) {$\rpoint$};

\draw[thick, red] (0,3)--(0,1.5)--(3,0)..controls (2,-1) and (-2,-1)..(-3,0)--(0,-3);

\node at (2.1,2.1) {$\gpoint$};
\node at (-2.1,2.1) {$\gpoint$};
\node at (2.1,-2.1) {$\gpoint$};
\node at (-2.1,-2.1) {$\gpoint$};

\draw[thick, dark-green] (-2.1,-2.1)--(2.1,-2.1)..controls (1,-2.1) and (-2.1,1)..(-2.1,2.1)--(2.1,2.1);
\draw[thick, dark-green] (-2.1,2.1)..controls (0,0) and (0,0)..(2.1,2.1);

\end{tikzpicture}}\]

The following is the main result we use in this section.

\begin{theorem}\cite{APS}\label{theorem::APS}
Let $(\surf, M_{\rpoint}, P_{\rpoint}, D)$ be a dissected surface and $A=A(D)$ be the corresponding gentle algebra. 

\begin{enumerate}
\item If $T$ is a basic tilting object in $\mathcal{D}^b(A)$, then there exists a collection of graded arcs $\{(\gamma_i,\mathbf{n}_i), i\in I\}$ such that 
\begin{enumerate} 
\item $T\simeq \bigoplus_{i\in I}P_{(\gamma_i,\grading_i)}$;
\item $\{\gamma_i, i\in I\}$ is a $\gpoint$-dissection whose dual $\rpoint$-dissection is denoted by $D_T$;
\item we have an isomorphism of algebras $\End_{\mathcal{D}^b(A)}(T)\simeq A(D_T)$;
\item for any $\delta\in \pi_1(\surf)$, we have $w_D(\delta)=w_{D_T}(\delta)$.
\end{enumerate}
\item Let $\{\gamma_i, i\in I\}$ be a $\gpoint$-dissection, and denote by $D'$ its dual $\rpoint$-dissection. If for any $\delta\in \pi_1(\surf)$ we have $w_D(\delta)=w_{D'}(\delta)$, then there exists a grading $\grading_i$ for any $i\in I$ such that $\bigoplus_{i\in I} P_{(\gamma_i,\grading_i)}$ is a tilting object in $\mathcal{D}^b(A)$. 
\end{enumerate}
\end{theorem}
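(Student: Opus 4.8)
The plan is to reduce everything to the geometric model for $\mathcal{D}^b(A)$ that is already implicit in the construction of the objects $P_{(\gamma,\grading)}$: the bijection between isoclasses of indecomposable objects of $\mathcal{D}^b(A)$ and graded $\gpoint$-arcs taken up to the identification $(\gamma,\grading)\sim(\gamma^{-1},\grading)$, together with the intersection formula that computes $\bigoplus_i\Hom_{\mathcal{D}^b(A)}(P_{(\gamma,\grading)},P_{(\gamma',\grading')}[i])$ as a graded vector space spanned by the graded intersection points of $\gamma$ and $\gamma'$ in the interior of $\surf$ and at their endpoints. I would first recall or set up this dictionary, since every step below is a translation through it.

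For part (1), I would write a basic tilting object as $T\simeq\bigoplus_{i\in I}T_i$ with the $T_i$ indecomposable and use the dictionary to write each summand as $P_{(\gamma_i,\grading_i)}$; band objects are excluded since they always admit self-extensions, so rigidity guarantees every $T_i$ is a string object, giving (a). The rigidity $\Hom_{\mathcal{D}^b(A)}(T,T[i])=0$ for $i\neq0$ then forbids any interior crossing between distinct $\gamma_i$ (a transversal crossing would contribute to $\Ext^1$ in one of the two directions by the intersection formula) and also forbids self-crossings, so $\{\gamma_i\}$ is a family of pairwise non-crossing $\gpoint$-arcs. To upgrade this to a full $\gpoint$-dissection I would invoke generation: $\thick(T)=\mathcal{D}^b(A)$ forces $|I|$ to equal the rank of $K_0(A)$, which is the number of arcs in any dissection, so the family is a maximal non-crossing system and its complementary regions are the required polygons; this is (b). For (c) the quiver and relations of $\End_{\mathcal{D}^b(A)}(T)$ are read off combinatorially: arrows correspond to arcs $\gamma_i,\gamma_j$ of $D_T$ consecutive around a common endpoint, and the gentle relations are exactly the compositions that fail to continue around that endpoint, reproducing the presentation of $A(D_T)$.

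Part (d) is the heart of the argument and the step I expect to be hardest. The grading $\grading_i$ records, by construction, how $\gamma_i$ winds against the line field $\eta_D$; since $T$ is tilting, its summands sit in a single relative degree, which pins down the gradings up to a global normalization and forces the dual dissection $D_T$ to carry a line field $\eta_{D_T}$ homotopic to $\eta_D$. Concretely, for a closed loop $\delta\in\pi_1(\surf)$ in minimal position I would compute $w_{D_T}(\delta)$ as a telescoping sum of the local grading increments prescribed by $D_T$ and compare it with the analogous sum for $D$; the coherence of the gradings forces the two sums to agree, yielding $w_D(\delta)=w_{D_T}(\delta)$ for all $\delta$.

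For part (2) I would run the dictionary in reverse. A $\gpoint$-dissection automatically gives a pairwise non-crossing family, so for any admissible gradings the object $\bigoplus_{i\in I}P_{(\gamma_i,\grading_i)}$ is rigid, and since the arcs of a dissection generate $\mathcal{D}^b(A)$ it is also a generator; the only thing to produce is a \emph{coherent} choice of gradings. The obstruction to gluing the local grading increments around the surface is precisely a winding-number discrepancy between $\eta_D$ and the line field $\eta_{D'}$ of the dual dissection, so the hypothesis $w_D(\delta)=w_{D'}(\delta)$ for all $\delta\in\pi_1(\surf)$ says these line fields are homotopic, which is exactly the condition that the increments glue to a global grading. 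Having chosen such gradings, the object is rigid and generating, hence tilting. The main obstacle throughout is the line-field/winding-number bookkeeping in (d) and in the grading-existence step of (2); the rigidity and generation arguments are comparatively formal once the intersection formula is in hand.
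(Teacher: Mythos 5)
This theorem is not proved in the paper at all: it is quoted from \cite{APS}, so there is no internal proof to compare against. Judged against the proof in that reference, your outline does follow the same route: the dictionary between indecomposable objects and graded curves with the intersection formula for morphisms, rigidity excluding band objects and interior crossings, a cardinality argument upgrading the non-crossing collection to a $\gpoint$-dissection, the combinatorial reading of the endomorphism algebra, and coherence of gradings as the bridge to winding numbers. So the strategy is the right one.

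As a proof, however, part (2) of your sketch contains a genuine logical error, and two further steps are asserted rather than proved. The error: you claim that for a $\gpoint$-dissection ``for any admissible gradings'' the object $\bigoplus_{i\in I}P_{(\gamma_i,\grading_i)}$ is rigid, and you bring in the winding-number hypothesis only to make gradings exist. But gradings of arcs always exist: the intersections of $\gamma_i$ with $D$ are linearly ordered along $\gamma_i$, so a grading is determined by its value at one point and propagation, with no obstruction. If rigidity and generation really held for arbitrary gradings, part (2) would be true with no hypothesis on winding numbers, which is false and contradicts (1)(d). What actually happens is that each shared endpoint of two arcs $\gamma_i,\gamma_j$ on the boundary produces a morphism whose shift is the difference $\grading_i(\gamma_i(t_1))-\grading_j(\gamma_j(t_1'))$ of the gradings at the first intersection points with $D$; rigidity holds precisely when all these differences vanish (this is the content of Remark \ref{remark::n_i}), and the winding-number hypothesis is exactly what kills the obstruction, around loops of $\surf$, to choosing all the gradings simultaneously compatible in this sense. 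The same endpoint-compatibility condition --- not your phrase ``the summands sit in a single relative degree'' --- is what makes the telescoping computation in (1)(d) work. The two unproved steps: first, that the object attached to a $\gpoint$-dissection generates $\mathcal{D}^b(A)$; this is a real lemma (proved in \cite{APS} by resolving arbitrary arcs through iterated cones on the dissection arcs), and without it ``rigid and generating, hence tilting'' is incomplete. Second, in (1)(b), the passage from ``pairwise non-crossing, pairwise non-homotopic collection of cardinality equal to the rank of $K_0(A)$'' to ``$\gpoint$-dissection'' needs the combinatorial fact that such maximal collections cut $\surf$ into the prescribed polygons; this is also a genuine step, not a formality.
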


In this result, the object $A$ seen as a tilting object in $\mathcal{D}^b(A)$ corresponds to the dual $\gpoint$-dissection of $D$ with the zero grading.

\begin{remark}\label{remark::n_i}A key point in the proof of Theorem \ref{theorem::APS} is the following fact:  if $T=\bigoplus_{i\in I}P_{(\gamma_i,\mathbf{n}_i)}$ is a tilting object, and if $\gamma_i$ and $\gamma_j$ intersects on the boundary (say $\gamma_i(0)=\gamma_j(0)$), then $\mathbf{n}_i(\gamma_i(t_1))=\mathbf{n}_j(\gamma_j(t'_1))$ where $\gamma_i(t_1)$ (resp. $\gamma_j(t'_1)$) is the first intersection point of $\gamma_i$ (resp. $\gamma_j$) with $D$.
\end{remark}

\subsection{$G$-invariant tilting objects}

Our aim is now to adapt Theorem~\ref{theorem::APS} to the case of a $G$-marked surface.

Let $(\surf, M,P,\sigma, D)$ be a $G$-$\rpoint$-dissected surface, and $A$ the corresponding gentle $G$-algebra. 

\begin{lemma}\label{lemma::sigma P gamma}
Let $(\gamma,\grading)$ be a graded curve. Then we have $(P_{(\gamma,\grading)})^\sigma\simeq P_{(\sigma\circ\gamma,\grading\circ \sigma)}$ in $\mathcal{D}^b(A)$.
\end{lemma}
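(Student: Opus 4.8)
The plan is to show that the triangle autoequivalence $(-)^\sigma = -\lten{A}A_\sigma$ carries $P_{(\gamma,\grading)}$ term by term to the complex built from $\sigma\circ\gamma$, and that the outcome is exactly $P_{(\sigma\circ\gamma,\grading\circ\sigma)}$. The three things to match are the summands, the grading shifts, and the differential (both its entries and the positions in which they sit).

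First I would compute the twist on the building blocks. For a vertex $i$ of $Q(D)$ (i.e.\ an arc of $D$), the map $y\mapsto \sigma(y)$ is an isomorphism of right $A$-modules $e_iA_\sigma\xrightarrow{\sim} e_{\sigma(i)}A$: indeed $\sigma(y\cdot_\sigma b)=\sigma(y\,\sigma(b))=\sigma(y)\,b$ since $\sigma^2=\mathrm{id}$, and $\sigma(e_iA)=e_{\sigma(i)}A$ because $\sigma$ permutes the primitive idempotents. Hence $(e_iA)\lten{A}A_\sigma\cong e_{\sigma(i)}A$. Moreover, if a path $p\in e_jAe_i$ is viewed as the morphism $e_iA\to e_jA$ given by left multiplication, then conjugating $p\lten{}1_{A_\sigma}$ by these identifications yields $z\mapsto \sigma(p)\,z$, i.e.\ left multiplication by $\sigma(p)$, where $\sigma$ now denotes the quiver automorphism of $Q(D)$ from Proposition~\ref{prop::bijection-G-gentle-dissection}. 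Since $(-)^\sigma$ is a triangle functor commuting with shifts and $P_{(\gamma,\grading)}$ is a complex of projectives, applying it componentwise shows that $(P_{(\gamma,\grading)})^\sigma$ is the complex with summands $e_{\sigma(i_j)}A[\grading(\gamma(t_j))]$ whose differential is obtained from that of $P_{(\gamma,\grading)}$ by replacing each entry $p_j(\gamma)$ with $\sigma(p_j(\gamma))$.

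Next I would identify this with the combinatorial data of $P_{(\sigma\circ\gamma,\grading\circ\sigma)}$. As $D$ is $\sigma$-invariant, the intersection points of $\sigma\circ\gamma$ with $D$ are precisely the points $\sigma(\gamma(t_j))$, lying on the arcs $\sigma(i_j)$, and $(\grading\circ\sigma)(\sigma(\gamma(t_j)))=\grading(\gamma(t_j))$, so the summands and grading shifts agree with those found above. The segment $(\sigma\circ\gamma)|_{[t_j,t_{j+1}]}=\sigma\circ(\gamma|_{[t_j,t_{j+1}]})$ runs through the polygon $\sigma(P)$, image of the polygon $P$ crossed by $\gamma|_{[t_j,t_{j+1}]}$; since $\sigma$ is orientation preserving it respects the counterclockwise order around each marked point used to read off the associated path, so $p_j(\sigma\circ\gamma)=\sigma(p_j(\gamma))$.

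The only remaining point, and the main obstacle, is that the two differentials put their nonzero entries in the \emph{same} positions rather than transposed ones; by the construction this is governed by the signs $w_\eta(\gamma|_{[t_j,t_{j+1}]})$ and $w_\eta((\sigma\circ\gamma)|_{[t_j,t_{j+1}]})$. Here I would use that the line field $\eta=\eta_D$ is determined up to homotopy by the $\sigma$-invariant dissection $D$, so $\sigma_*\eta_D\simeq\eta_D$; combined with naturality of the winding number under the orientation-preserving diffeomorphism $\sigma$ this gives
\[
w_\eta\big((\sigma\circ\gamma)|_{[t_j,t_{j+1}]}\big)=w_{\sigma_*\eta}\big(\sigma\circ(\gamma|_{[t_j,t_{j+1}]})\big)=w_\eta\big(\gamma|_{[t_j,t_{j+1}]}\big).
\]
Thus each entry lands in the same matrix position for both complexes, and under the identifications of the first paragraph the differentials coincide, giving $(P_{(\gamma,\grading)})^\sigma\simeq P_{(\sigma\circ\gamma,\grading\circ\sigma)}$ in $\mathcal{D}^b(A)$. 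The delicate bookkeeping is precisely verifying $\sigma_*\eta_D\simeq\eta_D$ and that the winding-number convention transports without a sign change, which is exactly where orientation-preservation of $\sigma$ is essential.
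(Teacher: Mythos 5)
Your proof is correct and takes essentially the same route as the paper's: identify $(e_iA)^\sigma\cong e_{\sigma(i)}A$, observe that $p_j(\sigma\circ\gamma)=\sigma(p_j(\gamma))$, and use the $\sigma$-invariance of the line field $\eta_D$ together with naturality of winding numbers under the orientation-preserving diffeomorphism $\sigma$ to match the differentials. Your write-up is more detailed than the paper's (in checking how morphisms transform under the twist and in tracking the positions of the differential entries, which the paper leaves implicit), but the key steps are identical.
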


\begin{proof}
First note that if $i$ is a vertex of $Q(D)$, then the automorphism $\sigma$ of $A$ induces an isomorphism of projective $A$-modules $$(e_i A)^\sigma=e_i A_{\sigma}\simeq e_{\sigma(i)} A.$$

If $\gamma$ intersects the arcs $i_1,\ldots, i_r$ of $D$ in $t_1<\cdots<t_r$, then the arc $\sigma\circ\gamma$ intersects the arcs $\sigma(i_1),\ldots, \sigma(i_r)$ in $t_1<\cdots<t_r$. It is immediate to see that $p_j(\sigma\circ\gamma)=\sigma(p_j(\gamma))$. Hence as a graded $A$-module we have $$P_{(\sigma\circ\gamma,\grading\circ\sigma^{-1})}=\bigoplus_{j=1}^r e_{\sigma(i_j)}A[\grading(\gamma(t_j))].$$
Now, since $D$ is $G$-invariant, the line field $\eta$ attached to it is also $G$-invariant, that is  we have $\sigma^*(\eta)=\eta$.  Therefore we have 
$$w_{\eta}(\sigma\circ \gamma_{|_{[t_j,t_{j+1}]}})=w_{\sigma^*(\eta)}(\gamma_{|_{[t_j,t_{j+1}]}})=w_{\eta}(\gamma_{|_{[t_j,t_{j+1}]}}).$$

Hence we get the result.

\end{proof}

\begin{theorem}\label{thm::G-tilting=G-dissection}
Let $(\surf, M_{\rpoint}, P_{\rpoint}, \sigma,D)$ be a $G$-dissected surface and $A=A(D)$ be the corresponding gentle $G$-algebra. 

\begin{enumerate}
\item If $T$ is a basic $G$-invariant tilting object  in $\mathcal{D}^b(A)$, then there exists a collection of graded arcs $\{(\gamma_i,\mathbf{n}_i), i\in I\}$ such that 
\begin{enumerate} 
\item $T\simeq \bigoplus_{i\in I}P_{(\gamma_i,\grading_i)}$;
\item $\{\gamma_i, i\in I\}$ is a $\gpoint$-dissection which is $G$-invariant, and whose dual $\rpoint$-dissection is denoted by $D_T$;
\item we have an isomorphism of $G$-algebras $\End_{\mathcal{D}^b(A)}(T)\simeq A(D_T)$;
\item for any $\delta\in \pi_1(\surf)$, we have $w_D(\delta)=w_{D_T}(\delta)$.
\end{enumerate}
\item Let $\{\gamma_i, i\in I\}$ be a $G$-invariant $\gpoint$-dissection, and denote by $D'$ its dual $\rpoint$-dissection. If for any $\delta\in \pi_1(\surf)$ we have $w_D(\delta)=w_{D'}(\delta)$, then there exist  a grading $\grading_i$ for any $i\in I$ such that $\bigoplus_{i\in I} P_{(\gamma_i,\grading_i)}$ is a $G$-invariant tilting object in $\mathcal{D}^b(A)$. 
\end{enumerate}
\end{theorem}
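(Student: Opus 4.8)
The plan is to upgrade the non-equivariant correspondence of Theorem~\ref{theorem::APS} to the $G$-setting, the only genuinely new ingredient being Lemma~\ref{lemma::sigma P gamma}, which records the effect of the autoequivalence $(-)^\sigma$ on the geometric objects as $(P_{(\gamma,\grading)})^\sigma\simeq P_{(\sigma\circ\gamma,\,\grading\circ\sigma)}$. Everything below consists in combining this one formula with the already established non-equivariant statement and with the definition of $G$-invariance.

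For part~(1) I would start from a basic $G$-invariant tilting object $T$ and first forget the $G$-structure. Theorem~\ref{theorem::APS}(1) then produces graded arcs $\{(\gamma_i,\grading_i)\}_{i\in I}$ with $T\simeq\bigoplus_i P_{(\gamma_i,\grading_i)}$, a $\gpoint$-dissection $\{\gamma_i\}$ with dual $\rpoint$-dissection $D_T$, an algebra isomorphism $\End_{\mathcal{D}^b(A)}(T)\simeq A(D_T)$, and the equality $w_D=w_{D_T}$ on $\pi_1(\surf)$; this already yields (a), (d) and the underlying algebra isomorphism in (c). To obtain the $G$-invariance of the dissection in (b), I use that $G$-invariance of $T$ gives $T^\sigma\simeq T$, while Lemma~\ref{lemma::sigma P gamma} gives $T^\sigma\simeq\bigoplus_i P_{(\sigma\circ\gamma_i,\,\grading_i\circ\sigma)}$. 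Comparing indecomposable summands via the uniqueness statement recalled just before Theorem~\ref{theorem::APS} (namely $P_{(\gamma,\grading)}\simeq P_{(\gamma',\grading')}$ forces $\gamma'\in\{\gamma,\gamma^{-1}\}$ and $\grading=\grading'$) shows that $\sigma$ permutes $\{\gamma_i\}$ up to isotopy and orientation, so $\{\gamma_i\}$, and hence $D_T$, is $G$-invariant. It then remains to promote the isomorphism in (c) to a $G$-isomorphism: the $G$-action on the left is conjugation by $\iota_\sigma$ as in Lemma~\ref{G-End}, the action on $A(D_T)$ is the permutation of arcs and vertices induced by $\sigma$, and under the geometric identification of the relevant $\Hom$-spaces with paths of $Q(D_T)$ the relation $p_j(\sigma\circ\gamma)=\sigma(p_j(\gamma))$ from the proof of Lemma~\ref{lemma::sigma P gamma} shows the two actions agree.

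For part~(2), Theorem~\ref{theorem::APS}(2) first yields gradings $\grading_i$ for which $T=\bigoplus_i P_{(\gamma_i,\grading_i)}$ is tilting. Since $(-)^\sigma$ is a triangle autoequivalence, $T^\sigma$ is again tilting, and by Lemma~\ref{lemma::sigma P gamma} it is supported on $\{\sigma\circ\gamma_i\}=\{\gamma_i\}$, the same $\gpoint$-dissection, because the latter is $G$-invariant by hypothesis. As the grading realizing a fixed $\gpoint$-dissection as a tilting object is unique up to a global shift, one gets $T^\sigma\simeq T[n]$ for some $n$; applying $(-)^\sigma$ once more and using $\sigma^2=\mathrm{id}$ gives $T\simeq T[2n]$, whence $n=0$ and $T^\sigma\simeq T$. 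To package this into an actual $G$-invariant structure I would choose the gradings $\sigma$-equivariantly: on each free orbit $\{\gamma,\sigma\circ\gamma\}$ set $\grading_{\sigma\circ\gamma}:=\grading_\gamma\circ\sigma$, and on each self-symmetric arc (with $\sigma\circ\gamma=\gamma^{-1}$, necessarily passing through a fixed point of $X$) choose a $\sigma$-symmetric grading; the canonical isomorphisms $e_iA_\sigma\xrightarrow{\sim}e_{\sigma(i)}A$ underlying Lemma~\ref{lemma::sigma P gamma} then assemble into $\iota_\sigma\colon T^\sigma\to T$.

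The step I expect to be the real obstacle is precisely the construction of the equivariant structure in part~(2), and in particular the self-symmetric arcs: there the naive isomorphism $T^\sigma\to T$ need only satisfy $\iota_\sigma\circ(\iota_\sigma)^\sigma=\pm\,\mathrm{id}$, so one must check that the cocycle identity $\iota_\sigma\circ(\iota_\sigma)^\sigma=\mathrm{id}$ can be arranged. It is exactly here that the hypothesis $\mathrm{char}\,k\neq 2$ enters: decomposing the $\sigma$-action on the relevant morphism space into its $\pm1$-eigenspaces lets one rectify the sign and make the equivariant grading choice consistent, which is also the geometric shadow of the splitting of a special loop into the idempotents $\tfrac{1\pm\sigma}{2}$.
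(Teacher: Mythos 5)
Your part (1) is essentially the paper's own argument: apply Theorem \ref{theorem::APS} non-equivariantly, deduce $\sigma$-invariance of the dissection from Lemma \ref{lemma::sigma P gamma} plus the uniqueness statement for the objects $P_{(\gamma,\grading)}$, and check equivariance of the isomorphism in (c) on the quiver. (One imprecision there: the relation $p_j(\sigma\circ\gamma)=\sigma(p_j(\gamma))$ only governs the differentials inside a single complex; for the arrows $\alpha$ of $Q(D_T)$ the paper must introduce the connecting paths $p_\alpha$ joining the $D$-arcs first crossed by $\gamma_i$ and by $\gamma_j$, and verify $\sigma(p_\alpha)=p_{\sigma(\alpha)}$ --- the same kind of verification, but a different gadget.) In part (2), however, your main argument is genuinely different from the paper's, and it is correct. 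The paper proves the grading identity \eqref{eq-grading} geometrically: it holds automatically on the self-symmetric arcs through the fixed points of $\sigma$ (using $\sigma^*\eta\simeq\eta$), and then propagates to all arcs via Remark \ref{remark::n_i} and an induction over the connected surface, with the fixed points as base case. You instead note that $T^\sigma$ is again tilting on the same $\gpoint$-dissection, invoke uniqueness of the grading up to a global shift to get $T^\sigma\simeq T[n]$, and use involutivity to force $n=0$. This is cleaner and does not require $\sigma$ to have a fixed point, which the paper's induction does. Two caveats: the uniqueness-up-to-shift statement is nowhere in the paper, so you must derive it (it follows from Remark \ref{remark::n_i} applied to both objects together with connectedness of $\End_{\mathcal{D}^b(A)}(T)\simeq A(D_T)$); and note that $n=0$ already gives $\grading_{\omega(i)}=\grading_i\circ\sigma$ exactly, so your subsequent ``equivariant re-choice'' of gradings is vacuous --- necessarily so, since shifting gradings arc by arc by unequal constants would destroy rigidity.

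The genuine flaw is your final paragraph, which is precisely where your written proof has a gap. There is no sign to rectify: the isomorphisms underlying Lemma \ref{lemma::sigma P gamma} are $e_iA_\sigma\to e_{\sigma(i)}A$, $a\mapsto\sigma(a)$, and since $\sigma^2=\mathrm{id}_A$ holds strictly they compose to the identity; for a self-symmetric arc, the identification of $P_{(\gamma^{-1},\grading)}$ with $P_{(\gamma,\grading)}$ is the order-reversing permutation of the summands, which also squares to the identity. Hence, once the gradings match, the canonical $\iota_\sigma$ satisfies $\iota_\sigma\circ(\iota_\sigma)^{\sigma}=\mathrm{id}$ on the nose; this is implicitly how the paper treats $G$-invariance (it reduces everything to \eqref{eq-grading} and uses these canonical maps). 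Conversely, your proposed repair mechanism could not fix a genuine sign: replacing $\iota_\sigma$ by $\mu\,\iota_\sigma$ multiplies $\iota_\sigma\circ(\iota_\sigma)^{\sigma}$ by $\mu^2$, so killing $-\mathrm{id}$ would require $-1$ to be a square in $k$, which $\mathrm{char}\,k\neq 2$ does not guarantee; and decomposing $\Hom$-spaces into $\pm 1$-eigenspaces (this is where $\mathrm{char}\,k\neq 2$ actually enters the paper, e.g.\ for the idempotents $\frac{1\pm\sigma}{2}$ in Proposition \ref{prop::iso AG-Abar}) has no bearing on the cocycle identity. So you were right to flag the cocycle condition, but the resolution is to exhibit the canonical isomorphisms and observe they strictly square to the identity, not a sign-rectification argument.
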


\begin{proof}
Assume that $T$ is a $G$-invariant tilting object. Then by Theorem~\ref{theorem::APS}, $T$ is of the form $\bigoplus_{i\in I}P_{(\gamma_i,\grading_i)}$ for some $\gpoint$-dissection $\{\gamma_i, i\in I\}$. Since $T$ is $G$-invariant, we have by Lemma \ref{lemma::sigma P gamma}$$\bigoplus_{i\in I}P_{(\gamma_i,\grading_i)}^\sigma\simeq\bigoplus_{i\in I} P_{(\sigma\circ \gamma_i,\grading\circ\sigma)}\simeq \bigoplus_{i\in I} P_{(\gamma_i,\grading_i)}.$$
Moreover, $P_{(\gamma,\grading)}\simeq P_{(\gamma',\grading')}$ implies that $\gamma'$ is homotopic to $\gamma$ or $\gamma^{-1}$, hence we obtain that $\{\gamma_i, i\in I\}$ and its dual $D_T$ are $\sigma$-invariant. Thus we get (1) (b).

\medskip

Now we want to check that the isomorphism $\End_{\mathcal{D}^b(A)}(T)\simeq A(D_T)$ commutes with the action of $\sigma$. It is enough to verify that the action commutes on the generators, that is on the quiver. First, the vertices of $Q(D_T)$ are in bijection with the arcs of $D_T$ which are in bijection with the arcs $\gamma_i$. The action of $\sigma$ on the vertex corresponding to $\gamma_i$ is then $\sigma(\gamma_i)$. Since $P_{(\gamma_i,\grading_i)}^\sigma$ is isomorphic to $P_{(\sigma(\gamma_i), \grading\circ\sigma)}$, the action is compatible on the vertices.

Secondly, let $\alpha:i\to j$ be an arrow in the quiver $Q(D_T)$. We will explicitly construct its image $p_\alpha$ through the isomorphism $A(D_T)\to \End_{\mathcal{D}^b(A)}(T)$.  The arrow $\alpha$ goes from $i$ to $j$ in $Q(D_T)$ precisely when the arcs $\gamma_i$ and $\gamma_j$ share an endpoint (assume $\gamma_i(0)=\gamma_j(0)$) and $\gamma_j$ follows directly $\gamma_i$  counterclockwise around $\gamma_i(0)$. Moreover, by Remark \ref{remark::n_i}, we have 
$\mathbf{n}_i(\gamma_i(t_1))=\mathbf{n}_j(\gamma_j(t'_1))$ where $\gamma_i(t_1)$ (resp. $\gamma_j(t'_1)$) is the first intersection point of $\gamma_i$ (resp. $\gamma_j$) with $D$.

 Denote by $\ell$ (resp. $k$) the arc of $D$ such that $\gamma_i(t_1)\in \ell$ (resp. $\gamma_j(t'_1)\in k$). The arcs $\ell$ and $k$ are a side of a common polygon cut out by $D$ (the one containing $\gamma_i(0)=\gamma_j(0)$ on its boundary). So there is a path (that maybe trivial ) from $\ell$ to $k$ in $Q(D)$, which corresponds to a non zero map $$p_{\alpha}:e_{\ell}A(D)[\grading_i(\gamma_i(t_1))]\to e_{k}A(D)[\grading_j(\gamma_j(t'_1)].$$  The image of $\alpha:i\to j$ in $\End_{\mathcal{D}^b(A)}(T)$ is the morphism $P_{(\gamma_i,\grading_i)}\to P_{(\gamma_j,\grading_j)}$ induced by the map $p_{\alpha}$. 
 
\[\scalebox{1}{
\begin{tikzpicture}[>=stealth,scale=0.6]
\node at (0,0) {$\rpoint$};
\node at (0,2) {$\rpoint$};
\node at (2,4) {$\rpoint$};
\node at (4,2) {$\rpoint$};
\node at (4,0) {$\rpoint$};
\node at (2,0) {$\gpoint$};

\draw[thick, red] (0,0)--node (A1) {}(0,2)--node (A2) {}(2,4)--node (A3) {}(4,2)--node (A4) {}(4,0);
\draw[thick] (0,0)--(4,0);
\draw[thick, blue, ->] (A4)--(A3);\draw[thick, blue, ->] (A3)--(A2);

\draw[thick, dark-green] (5,1.5)--(2,0)--(0.7,4);

\node at (2,-0.5) {$\gamma_i(0)$};

\node at (5,1) {$\gamma_i(t_1)$};
\node at (0,3) {$\gamma_j(t'_1)$};
\node[blue] at (2.8,2.2) {$p_{\alpha}$};

\end{tikzpicture}}\] 
 
From the construction, it is now clear that $\sigma (p_{\alpha})=p_{\sigma(\alpha)}$ and we get (1) (c).

\medskip

Let $\{\gamma_i, i\in I\}$ be a $G$-invariant $\gpoint$-dissection as in (2). Then by Theorem \ref{theorem::APS} there exists a grading $\grading_i$ for each $i\in I$ such that $\bigoplus_{i\in I} P_{(\gamma_i,\grading_i)}$ is a tilting object. Since the collection $\{\gamma_i, i\in I\}$ is $G$-invariant, there exists a permutation $\omega$ of the indices $i\in I$ such that $\sigma(\gamma_i)=\gamma_{\omega(i)}$ or $\sigma(\gamma_i)=\gamma_{\omega(i)}^{-1}$. In order to prove that $T$ is $G$-invariant we need to show that  for any $i\in I$, if $t$ is such that $\gamma_i(t)$ is in $D$, then \begin{equation}\label{eq-grading}
\grading_i\circ\sigma (\sigma(\gamma_i(t))=\grading_i(\gamma_i(t))=\grading_{\omega (i)}(\sigma(\gamma_i(t)) 
\end{equation}

\[\scalebox{1}{
\begin{tikzpicture}[>=stealth,scale=0.6]

\node at (0,0) {$\gpoint$};
\node at (6,2) {$\gpoint$};

\draw[dark-green, thick] (0,0)..controls (0,1) and (5,2)..(6,2);

\draw[red, thick] (2,0.5)--(2,2);
\node at (3,0.7) {$\gamma_i(t)$};

\begin{scope}[rotate=180, yshift=1cm, xshift=1cm]
\node at (0,0) {$\gpoint$};
\node at (6,2) {$\gpoint$};

\draw[dark-green, thick] (0,0)..controls (0,1) and (5,2)..(6,2);

\draw[red, thick] (2,0.5)--(2,2);
\node at (3,0.7) {$\sigma(\gamma_i(t))$};

\end{scope}

\end{tikzpicture}}\]

First assume that $i$ is such that $\omega(i)=i$. This means that $\sigma(\gamma_i)=\gamma_i^{-1}$, and there exists a unique point of $\gamma_i$ fixed by $\sigma$. This point is then a $\cross$, and without loss of generality we may assume that it is $\gamma_i(\frac{1}{2})$. Let $t<\frac{1}{2}$ be such that $\gamma_i(t)\in D$. By definition of a grading we have $$\grading_i(\gamma_i(t))=\grading_i(\gamma_i(\frac{1}{2}))-w_{\eta}(\gamma_{|[t,\frac{1}{2}]}).$$

\[\scalebox{1}{
\begin{tikzpicture}[>=stealth,scale=0.6]

\node at (0,0) {$\cross$};
\node at (6,2) {$\gpoint$};
\node at (1,0) {$\gamma_i(\frac{1}{2})$};

\draw[dark-green, thick] (0,0)..controls (0,1) and (5,2)..(6,2);

\draw[red, thick] (2,0.5)--(2,2);
\node at (3,0.7) {$\gamma_i(t)$};
\draw[red, thick] (0,-1)--(0,1);

\begin{scope}[rotate=180]

\node at (6,2) {$\gpoint$};

\draw[dark-green, thick] (0,0)..controls (0,1) and (5,2)..(6,2);

\draw[red, thick] (2,0.5)--(2,2);
\node at (3,0.7) {$\sigma(\gamma_i(t))$};

\end{scope}

\end{tikzpicture}}\]
Therefore we have the following equalities:
\begin{align*} \grading_i(\sigma(\gamma_i(t)) = 
  \grading_i(\sigma(\gamma_i(\frac{1}{2})))- w_\eta (\sigma\circ\gamma_{i|_{[t,\frac{1}{2}]}}) \\ = \grading_i(\gamma_i(\frac{1}{2}))- w_{\sigma^*\eta} (\gamma_{i|_{[t,\frac{1}{2}]}})  = \grading_i(\gamma_i(t)) 
 \end{align*}
 since $\sigma^*\eta$ is homotopic to $\eta$.
 That is, we have \eqref{eq-grading} for $i$ such that $\omega(i)=i$.
 
 Now assume that $\gamma_j$ is an arc with $\omega(j)\neq j$. Suppose that $\gamma_j$ shares an endpoint with an arc $\gamma_i$ satisfying \eqref{eq-grading}. Without loss of generality we may assume that $\gamma_i(0)=\gamma_j(0)$.  Define $t_1$ (resp. $t'_1$) such that $\gamma_i(t_1)$ (resp. $\gamma_j(t'_1)$) is the first intersection of $\gamma_i$ (resp. $\gamma_j$) with $D$. Let $t\geq t'_1$ such that $\gamma_j(t)$ is in $D$. Then $\sigma(\gamma_i)$ and $\sigma(\gamma_j)$ also have the same starting point, and their first intersection with $D$ are also at $t_1$ (resp. at $t'_1$).
 
 \[\scalebox{1}{
\begin{tikzpicture}[>=stealth,scale=0.6]

\node at (0,0) {$\cross$};
\node at (6,2) {$\gpoint$};
\node at (2,4) {$\gpoint$};

\draw[dark-green, thick] (0,0)..controls (0,1) and (5,2)..(6,2)--(2,4);

\draw[red, thick] (4,1)--(4,2);
\node at (4.5,0.7) {$\gamma_i(t_1)$};
\draw[red, thick] (0,-1)--(0,1);
\draw[red, thick] (4.5,2.5)--(5.5,2.5);
\draw[red, thick] (3,3)--(3,4);
\node at (6.5,2.5) {$\gamma_j(t'_1)$};
\node at (4,4) {$\gamma_j(t)$};

\begin{scope}[rotate=180]

\node at (6,2) {$\gpoint$};
\node at (2,4) {$\gpoint$};

\draw[dark-green, thick] (0,0)..controls (0,1) and (5,2)..(6,2)--(2,4);

\draw[red, thick] (4,1)--(4,2);
\draw[red, thick] (4.5,2.5)--(5.5,2.5);
\draw[red, thick] (3,3)--(3,4);

\node at (4.5,0.7) {$\sigma(\gamma_i)(t_1)$};
\node at (7,2.5) {$\sigma(\gamma_j)(t'_1)$};
\node at (4,4) {$\sigma(\gamma_j)(t)$};

\end{scope}

\end{tikzpicture}}\]
 We have the equalities

\begin{align*}
\grading_{\omega(j)}(\sigma(\gamma_j)(t))  =  \grading_{\omega(j)}(\sigma(\gamma_j(t'_1)) + w_{\eta}(\sigma \circ\gamma_{j|_{[t'_1,t]}})\\
 =  \grading_{\omega(i)}(\sigma(\gamma_i(t_1)) + w_{\sigma^*(\eta)}(\gamma_{j|_{[t'_1,t]}}) 
 \\ = \grading_{i}(\gamma_i(t_1)) + w_{\eta}(\gamma_{j|_{[t'_1,t]}}) 
 \\ = \grading_{j}(\gamma_j(t'_1)) + w_{\eta}(\gamma_{j|_{[t'_1,t]}})  
 \\ = \grading_j(\gamma_j(t)) 
\end{align*}

Now we can conclude by induction since the surface $\surf$ is connected and since there exists at least one fixed point for $\sigma$.
 
\end{proof}

\subsection{$\dual$-derived equivalences}

Combining this result with Corollary~\ref{cor::Geq=G-hat-eq-gentle} we obtain the following.

\begin{theorem}\label{theorem::dual-derived equivalence}
Let $\bar{\Lambda}$ and $\bar{\Lambda}'$ be skew-gentle algebras, together with their natural $\dual$-action. Let $(\surf,M,P,\sigma, D)$ (resp. $(\surf',M',P',\sigma', D')$) be the $G$-dissected surface associated to $\bar{\Lambda}$ (resp. to $\bar{\Lambda}'$) as constructed in Theorem \ref{thm::construction covering}.
The following are equivalent
\begin{enumerate}
\item the algebras $\bar{\Lambda}$ and $\bar{\Lambda}'$ are $\dual$-derived equivalent;
\item there exists an orientation preserving $G$-diffeomorphism $\Phi:\surf\to \surf'$ sending $M$ (resp. $P$) to $M'$ (resp. $P'$) such that the line fields $\Phi^*(\eta')$ and $\eta$ are homotopic.
\end{enumerate}
\end{theorem}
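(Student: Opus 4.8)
The plan is to reduce the statement to the gentle, $G$-equivariant setting via Corollary \ref{cor::Geq=G-hat-eq-gentle}, and then to translate $G$-derived equivalence into the geometry of $G$-invariant $\gpoint$-dissections using Theorem \ref{thm::G-tilting=G-dissection}. Writing $A=A(D)$ and $A'=A(D')$ for the gentle $G$-algebras attached to the two covers, Corollary \ref{cor::Geq=G-hat-eq-gentle} gives $\mathcal{D}^b(\bar{\Lambda})\underset{\dual}{\sim}\mathcal{D}^b(\bar{\Lambda}')$ if and only if $\mathcal{D}^b(A)\underset{G}{\sim}\mathcal{D}^b(A')$. Thus it suffices to prove that $A$ and $A'$ are $G$-derived equivalent exactly when the required $G$-diffeomorphism exists. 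By definition, $A$ and $A'$ are $G$-derived equivalent precisely when there is a $G$-invariant tilting object $T\in\mathcal{D}^b(A)$ together with a $G$-isomorphism $\End_{\mathcal{D}^b(A)}(T)\simeq A'$.

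For the implication $(1)\Rightarrow(2)$, I would start from such a $T$ and apply Theorem \ref{thm::G-tilting=G-dissection}(1): it produces a $G$-invariant $\gpoint$-dissection whose dual $\rpoint$-dissection $D_T$ satisfies a $G$-isomorphism $A(D_T)\simeq\End_{\mathcal{D}^b(A)}(T)\simeq A'$ together with the winding-number identity $w_D(\delta)=w_{D_T}(\delta)$ for all $\delta\in\pi_1(\surf)$. The $G$-isomorphism $A(D_T)\simeq A(D')$ and the injectivity of the assignment $\bar D\mapsto A(\bar D)$ on $G$-dissections (Proposition \ref{prop::bijection-G-gentle-dissection}) yield an orientation-preserving $G$-diffeomorphism $\Phi:\surf\to\surf'$ with $\Phi(M)=M'$, $\Phi(P)=P'$ and $\Phi(D_T)=D'$. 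Naturality of the line-field construction then gives $\Phi^*(\eta')=\Phi^*(\eta_{D'})\simeq\eta_{D_T}$, while the identity $w_{D_T}=w_D$ forces $\eta_{D_T}\simeq\eta_D=\eta$, since a line field on $\surf$ is determined up to homotopy by its winding numbers along $\pi_1(\surf)$ (the classification of line fields used in \cite{APS}). Composing these homotopies gives $\Phi^*(\eta')\simeq\eta$, which is (2).

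For the converse $(2)\Rightarrow(1)$, I would run this argument backwards. Starting from the $\gpoint$-dissection of $\surf'$ dual to $D'$, which is $\sigma'$-invariant, I pull it back along $\Phi$ to a collection $\{\gamma_i\}$ of $\gpoint$-arcs on $\surf$; because $\Phi$ commutes with the involutions, this is a $G$-invariant $\gpoint$-dissection, with dual $\rpoint$-dissection $D''=\Phi^{-1}(D')$. The associated line field satisfies $\eta_{D''}\simeq\Phi^*(\eta_{D'})=\Phi^*(\eta')\simeq\eta$ by hypothesis, so $w_{D''}=w_D$ on $\pi_1(\surf)$, and Theorem \ref{thm::G-tilting=G-dissection}(2) furnishes gradings $\grading_i$ making $T=\bigoplus_i P_{(\gamma_i,\grading_i)}$ a $G$-invariant tilting object of $\mathcal{D}^b(A)$. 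By Theorem \ref{thm::G-tilting=G-dissection}(1)(c) we obtain a $G$-isomorphism $\End_{\mathcal{D}^b(A)}(T)\simeq A(D'')$, and since $\Phi$ is a $G$-diffeomorphism carrying $D''$ to $D'$, Proposition \ref{prop::bijection-G-gentle-dissection} gives $A(D'')\simeq A(D')=A'$ as $G$-algebras. Hence $A$ and $A'$ are $G$-derived equivalent, and Corollary \ref{cor::Geq=G-hat-eq-gentle} completes the proof.

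The substantive input has already been isolated in the earlier results, so the argument here is essentially a bookkeeping of functors and duals; the one point requiring genuine care is the passage between the algebraic and geometric encodings of the \emph{$G$-equivariant} structure. Concretely, I must make sure that the $G$-isomorphism of endomorphism algebras coming from Theorem \ref{thm::G-tilting=G-dissection}(1)(c) matches the $\sigma$-action built into $A(D_T)$ through Proposition \ref{prop::bijection-G-gentle-dissection}, and that the line fields $\eta$ and $\Phi^*(\eta')$ being compared are both the $\sigma$-invariant ones attached to the dissections. The homotopy between them need not itself be taken equivariant, since by the winding-number classification equality of winding numbers along $\pi_1(\surf)$ already guarantees an unconstrained homotopy. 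This compatibility of the equivariant data on both sides of the dictionary is where I expect the only real friction.
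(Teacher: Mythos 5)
Your proposal is correct and follows essentially the same route as the paper's own proof: reduction to the gentle covers via Corollary \ref{cor::Geq=G-hat-eq-gentle}, then translation between $G$-invariant tilting objects and $G$-invariant dissections via Theorem \ref{thm::G-tilting=G-dissection} and Proposition \ref{prop::bijection-G-gentle-dissection}, with the winding-number comparison (and the classification of line fields up to homotopy by winding numbers) closing both implications. The only difference is that you spell out the $(2)\Rightarrow(1)$ direction and the equivariant compatibility points more explicitly than the paper, which states them tersely.
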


\begin{proof}
Denote by $\Lambda$ (resp. $\Lambda'$) the $G$-gentle algebras associated to $\bar{\Lambda}$ (resp. $\bar{\Lambda}'$) as in Theorem \ref{thm::construction covering}. These are the algebras associated with the $G$-dissected surfaces $(\surf,M,P,\sigma,D)$ (resp. $(\surf',M',P',\sigma',D')$). 
From Corollary \ref{cor::Geq=G-hat-eq-gentle}, (1) is equivalent to the fact that $\Lambda$ and $\Lambda'$ are $G$-derived equivalent.

Assume (1), then there exists a $G$-tilting object $T\in \mathcal{D}^b(\Lambda)$ together with a $G$-isomorphism $\End_{\mathcal{D}^b(\Lambda)}(T)\simeq \Lambda'$. Hence by Theorem \ref{thm::G-tilting=G-dissection}, there exists a $G$-invariant dissection $D_T$ of $\surf$, together with a $G$-isomorphism $A(D_T)\simeq \Lambda'\simeq A(D')$. By Proposition \ref{prop::bijection-G-gentle-dissection}, there exists a $G$-invariant diffeomorphism $\Phi:\surf\to \surf$ sending $D_T$ on $D'$. Denote by $\eta$ (resp. $\eta'$) the line field associated with $D$ (resp. $D'$),  then we have for $\delta\in \pi_1(\surf)$  
$$w_{\eta}(\delta)=w_{D}(\delta)=w_{D_T}(\delta)=w_{D'}(\Phi(\delta))=w_{\Phi^*(\eta')}(\delta).$$
Therefore the line fields $\eta$ and $\Phi^*(\eta')$ are homotopic.

\medskip

Asume (2), and denote by $D'':=\Phi^{-1}(D)$. Since $\Phi$ is $G$-invariant, this is a $G$-invariant dissection of $\surf$. Moreover $w_{D''}(\delta)=w_{D'}(\Phi(\delta))=w_{D}(\delta)$ by assumption 
 so we can conclude by Theorem \ref{thm::G-tilting=G-dissection}.
\end{proof}

\begin{remark}
We can apply Theorem 1.2 in \cite{APS} to get a more concrete criterion to check whether two skew-gentle algebras are $\dual$-derived equivalent or not. However, as far as we know, we only get a necessary condition for (1) to be true. Indeed, if (2) is satisfied in Theorem 5.6, then we  get some equalities for the winding numbers of a basis of the fundamental group of the surfaces $\surf$ and $\surf '$ with respect to the line fields $\eta$ and $\eta'$ (see Section \ref{final-examples} for examples). 

However, when trying to apply the converse implication in Theorem 1.2 in \cite{APS}, we only obtain the following: if all the numbers in Theorem 1.2 in \cite{APS} coincide for $\Lambda$ and $\Lambda '$, we deduce that 
\begin{itemize}
\item the line fields $\eta$ and $\eta'$ are $G$-invariant (this is by construction)
\item the surfaces $\surf$ and $\surf'$ are $G$-diffeomorphic;
\item there exists a diffeomorphism $\Phi:\surf\to \surf'$ such that $\Phi^*(\eta')$ is homotopic to $\eta$.
\end{itemize}
But it is not clear that this $\Phi$ is a $G$-diffeomorphism. 
\end{remark}
\subsection{Derived equivalence via $\dual$-tilting objects}

We are now interested in the case where the derived equivalence between two skew-gentle algebras does not necessarily respect the $\dual$-action.

Let $\surf$ be a smooth surface, and $\sigma$ be a diffeomorphism of $\surf$ of order 2 with finitely many fixed points $X$. Denote by $\bar{\surf}=\surf/\sigma$ the corresponding orbifold and $p:\surf\to \bar{\surf}$ the projection. If $\eta$ is a $G$-invariant line field on $\surf$, then there exists a line field $\bar{\eta}=p_*(\eta)$ on $\bar{\surf}\setminus X$, since $p$ is locally a diffeomorphism on $\surf\setminus X$. 
Moreover, if $\eta$ and $\eta'$ are two $G$-invariant line fields on $\surf$, then we have 
\begin{equation}\label{w-wbar} w_\eta=w_{\eta'} \Leftrightarrow w_{\bar{\eta}}=w_{\bar{\eta}'}.
\end{equation}
Indeed, if $\delta$ is a closed curve in $\pi_1(\surf)$, then $p(\delta)$ is a closed curve in $\bar{\surf}$.  Conversely, if $\delta$ is in $\pi_1(\bar{\surf})$, denote by $\widetilde{\delta}$ a lift of $\delta$. 
If $\widetilde{\delta}$ is a closed curve, we clearly have 
\begin{equation}\label{w-wbar2} w_{\bar{\eta}}(\delta)=w_{\eta}(\widetilde{\delta}).
\end{equation} 

If $\widetilde{\delta}$ is not a closed curve, then $\widetilde{\delta}.\sigma(\widetilde{\delta})$ is closed, and 
\begin{equation}\label{w-wbar3} w_{\bar{\eta}}(\delta)=w_{\eta}(\widetilde{\delta})=\frac{1}{2}(w_{\eta}(\widetilde{\delta}.\sigma(\widetilde{\delta})),
\end{equation} since $\eta$ is $\sigma$-invariant.

\begin{remark}\label{remark::degeneration}
Note that when $(\surf,\sigma,\eta)$ is constructed from a $G$-gentle algebra $A$. The line field $\bar{\eta}=p_*(\eta)$ on $\bar{\surf}\setminus X$ is exactly the line field coming from the gentle degeneration $\bar{A}_0$ of the skew-gentle algebra $\bar{A}$. 
\end{remark}

\begin{theorem}\label{thm::derived-skew-gentle}
Let $\bar{\Lambda}$ and $\bar{\Lambda}'$ be skew-gentle algebras associated with $\cross$-dissected surfaces $(\surf, M, P, X, D)$ and $(\surf', M', P', X', D')$. Then the following are equivalent:

\begin{enumerate}
\item there exists an equivalence $\mathcal{D}^b(\bar{\Lambda})\simeq \mathcal{D}^b(\bar{\Lambda}')$ given by a $\dual$-tilting object;
\item there exists an orientation diffeomorphism $\bar{\Phi}:\surf\to \surf'$ sending $M$ to $M'$, $P$ to $P'$, $X$ to $X'$ and such that the line fields $\eta_{D}$ and $\bar{\Phi}^*(\eta_{D'})$ are homotopic.
\end{enumerate}

\end{theorem}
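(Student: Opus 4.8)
The plan is to lift the whole question to the double cover $\widetilde{\surf}$ of $\surf$ provided by Theorem \ref{thm::construction covering}, where the skew-gentle algebra $\bar\Lambda$ becomes Morita equivalent to the skew-group algebra $A\,G$ of the gentle $G$-algebra $A=A(\widetilde D)$, and then to run the already-established $G$-equivariant dictionary of Theorem \ref{thm::G-tilting=G-dissection}, descending the outcome to the orbifold via the projection $p$. The one conceptual point to keep in view is that a derived equivalence \emph{via} a $\dual$-tilting object only asks for an algebra isomorphism $\End(T)\cong\bar\Lambda'$, not a $\dual$-isomorphism; this is exactly what separates the present statement from Theorem \ref{theorem::dual-derived equivalence}, and geometrically it is what lets us land on a mere diffeomorphism of the \emph{orbifolds} $\surf,\surf'$ instead of a $\sigma$-commuting diffeomorphism of the covers. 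Throughout I fix $A=A(\widetilde D)$ and $A'=A(\widetilde D')$, recall $\eta_D=p_*(\eta_{\widetilde D})$ by Remark \ref{remark::degeneration}, and note that the relevant cover line fields are $G$-invariant so that \eqref{w-wbar} is available in both directions.

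For $(1)\Rightarrow(2)$: given a $\dual$-tilting object $T\in\mathcal D^b(\bar\Lambda)$ with $\End(T)\cong\bar\Lambda'$, Theorem \ref{thm::bijection-tilting} shows $T_\Lambda:=\mathrm{Res}(T)$ is a $G$-invariant tilting object of $\mathcal D^b(A)$, and Proposition \ref{prop::G-hat-iso} gives $\End(T)\cong\End(T_\Lambda)\,G$. Feeding $T_\Lambda$ into Theorem \ref{thm::G-tilting=G-dissection}(1) yields a $G$-invariant dissection $\widetilde D_T$ of $\widetilde\surf$ with $\End(T_\Lambda)\cong A(\widetilde D_T)$ and $w_{\widetilde D}=w_{\widetilde D_T}$ on $\pi_1(\widetilde\surf)$. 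Hence $\bar\Lambda'\cong A(\widetilde D_T)\,G$, which by Proposition \ref{prop::iso AG-Abar} is (up to basic algebra) the skew-gentle algebra of the $\cross$-dissection $\overline{\widetilde D_T}$ on the quotient orbifold $\surf=\widetilde\surf/\sigma$. Through the correspondence between skew-gentle algebras and $\cross$-dissections (Propositions \ref{prop::bijection-skewgentle-dissection} and \ref{prop::isoalgebras-isopairs}), the algebra isomorphism $\bar A(\overline{\widetilde D_T})\cong\bar\Lambda'$ gives an orientation-preserving diffeomorphism $\bar\Phi:\surf\to\surf'$ taking $\overline{\widetilde D_T}$ to $D'$. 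It remains to match line fields: $w_{\widetilde D}=w_{\widetilde D_T}$ together with \eqref{w-wbar} yields $w_{\eta_D}=w_{\overline{\eta_{\widetilde D_T}}}$ on $\surf$, so by the homotopy classification of line fields of \cite{APS} the orbifold line fields $\eta_D$ and $\overline{\eta_{\widetilde D_T}}$ are homotopic; since $\bar\Phi$ carries $\overline{\widetilde D_T}$ onto $D'$ it carries $\overline{\eta_{\widetilde D_T}}$ onto $\eta_{D'}$ up to homotopy, whence $\bar\Phi^*(\eta_{D'})\simeq\eta_D$.

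For $(2)\Rightarrow(1)$: set $D'':=\bar\Phi^{-1}(D')$, a $\cross$-dissection of $\surf$ with $\bar A(D'')\cong\bar\Lambda'$ and $\eta_{D''}=\bar\Phi^*(\eta_{D'})\simeq\eta_D$. Using the bijection between $G$-dissections of the fixed cover $(\widetilde\surf,\sigma)$ and $\cross$-dissections of $\surf$, the preimage $\widetilde{D''}:=p^{-1}(D'')$ is a $G$-dissection of $\widetilde\surf$; the homotopy $\eta_D\simeq\eta_{D''}$ and \eqref{w-wbar} read backwards give $w_{\widetilde D}=w_{\widetilde{D''}}$ on $\pi_1(\widetilde\surf)$. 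Theorem \ref{thm::G-tilting=G-dissection}(2) then produces a $G$-invariant tilting object $T_\Lambda\in\mathcal D^b(A)$ with $\End(T_\Lambda)\cong A(\widetilde{D''})$, and $T:=T_\Lambda\lten{A}AG$ is a $\dual$-tilting object of $\mathcal D^b(\bar\Lambda)$ by Theorem \ref{thm::bijection-tilting}. Finally Propositions \ref{prop::G-hat-iso} and \ref{prop::iso AG-Abar} give $\End(T)\cong\End(T_\Lambda)\,G\cong A(\widetilde{D''})\,G$, whose basic algebra is $\bar A(D'')\cong\bar\Lambda'$; passing to basic objects, the equivalence $\mathcal D^b(\bar\Lambda)\simeq\mathcal D^b(\End(T))\simeq\mathcal D^b(\bar\Lambda')$ is the one induced by $T$.

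The step I expect to be the main obstacle is the faithful transfer of the line-field data across the branched projection $p$. One must justify carefully that $\eta_D=p_*(\eta_{\widetilde D})$, that the winding-number comparison \eqref{w-wbar} holds in both directions — including for closed curves on the orbifold whose lifts fail to close up, which is handled by \eqref{w-wbar3} — and, crucially, that equality of \emph{all} winding numbers upgrades to homotopy of line fields on the punctured orbifold $\surf\setminus X$ through \cite{APS}. A secondary but essential point is that the diffeomorphism extracted from the bare algebra isomorphism need not be $\sigma$-equivariant: this is precisely the relaxation over Theorem \ref{theorem::dual-derived equivalence}, and it must not be silently strengthened to a $G$-diffeomorphism of covers.
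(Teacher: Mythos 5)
Your proposal is correct and follows essentially the same route as the paper's proof: lift to the double cover of Theorem \ref{thm::construction covering}, translate the $\dual$-tilting object into a $G$-invariant tilting object via Theorem \ref{thm::bijection-tilting} and Proposition \ref{prop::G-hat-iso}, convert to a $G$-dissection with matching winding numbers by Theorem \ref{thm::G-tilting=G-dissection}, project to the orbifold using Proposition \ref{prop::iso AG-Abar} and \eqref{w-wbar}, and extract the diffeomorphism from the dissection--algebra correspondence (and the same steps in reverse for the converse). The only differences are cosmetic: you cite Propositions \ref{prop::bijection-skewgentle-dissection} and \ref{prop::isoalgebras-isopairs} where the paper invokes Proposition \ref{prop::bijection-G-gentle-dissection}, and you make explicit the passage from equality of winding numbers to homotopy of line fields, which the paper leaves implicit.
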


\begin{proof}

Denote by $\Lambda$ the $G$-gentle algebra corresponding to $\bar{\Lambda}$ as constructed in Theorem \ref{thm::construction covering}. We denote by $(\widetilde{\surf},\widetilde{M},\widetilde{P},\sigma,\widetilde{D})$ the corresponding $G$-dissected surface.

\medskip

Assume (1), and denote by $\bar{T}\in \mathcal{D}^b(\bar{\Lambda})$ a $\dual$-invariant tilting object such that $\End_{\mathcal{D}^b(\bar{\Lambda})}(\bar{T})\simeq \bar{\Lambda}'$ (note that we do not ask this isomorphism to be compatible with the action of $\dual$). 
By Theorem \ref{thm::bijection-tilting}, there exists a $G$-tilting object $T$ in $\mathcal{D}^b(\Lambda)$ such that ${\rm add}(\bar{T})={\rm add}(T\lten{\Lambda} \Lambda G e)$ where $e$ is the idempotent defined in Theorem \ref{thm::construction covering}. 

Denote by $D_T$ the $G$-dissection of $\widetilde{\surf}$ corresponding to $T$, and $\bar{D}_T:=p(D_T)$ the corresponding $\cross$-dissection of $\surf$. By Theorem \ref{thm::G-tilting=G-dissection}(1) (c), we have a $G$-isomorphism 
\begin{equation}
\label{G-iso}
\End_{\mathcal{D}^b(\Lambda)}(T)\underset{G}{\simeq} A(D_T) 
\end{equation} 
Therefore we have the following isomorphisms

\[\begin{array}{rcll}
\bar{A}(D') & \simeq & \bar{\Lambda}'\simeq \End_{\mathcal{D}^b(\bar{\Lambda})}(\bar{T}) & \\
 &\simeq & (\End_{\mathcal{D}^b(\Lambda)}(T)G)_{\rm b} & \textrm{by Theorem \ref{thm::bijection-tilting}}\\
  & \simeq &(A(D_T)G)_{\rm b} &  \textrm{by }\eqref{G-iso}\\
   & \simeq & \bar{A}(\bar{D}_T) & \textrm{by Proposition \ref{prop::iso AG-Abar}}
   \end{array}\]
 Hence by Proposition \ref{prop::bijection-G-gentle-dissection}, there exists a diffeomorphism $\bar{\Phi}:\surf \setminus X\to \surf'\setminus X'$ sending marked points to marked points and such that $\bar{\Phi}(\bar{D}_T)=D'$. Now since $T$ is a tilting object, we have $w_{\widetilde{D}}=w_{D_T}$. Hence by \eqref{w-wbar}, we have $w_{D}=w_{\bar{D}_T}$ and so $w_{\eta_D}=w_{\bar{\Phi}^*(\eta_{D'})}.$
 
 \medskip
 
 Assume (2) and denote by $D'':=\bar{\Phi}^{-1}(D')$, which is a $\cross$-dissection of $\surf$. Then $\widetilde{D}'':=p^{-1}(D'')$ is a $G$-invariant dissection of $\widetilde{\surf}$. By a similar argument as above we have $w_{\widetilde{D}}=w_{\widetilde{D}''}$, hence there exists a $G$-invariant tilting object $T$ in $\mathcal{D}^b(\Lambda)$ together with a $G$-isomorphism 
$$
\End_{\mathcal{D}^b(\Lambda)}(T)\underset{G}{\simeq} A(\widetilde{D}'') 
$$
Then the object $\bar{T}:=T\lten{\Lambda}\Lambda G e$ is a $\dual$-tilting object in $\mathcal{D}^b(\bar{\Lambda})$ such that 
\begin{align*}
\End_{\mathcal{D}^b(\bar{\Lambda})}(\bar{T}) & \simeq (\End_{\mathcal{D}^b(\Lambda)}(T) G)_{\rm b} \\ & \simeq (A(\widetilde{D}'')G)_{\rm b}\\ & \simeq \bar{A}(D'')\\ & \simeq  \bar{A}(D')=\bar{\Lambda}'.
\end{align*}
\end{proof}

 \begin{remark}
 Note that in this proof, we work only in the covering $\widetilde{\surf}$ of $\surf$ given by $D$, and never in the covering of $\surf '$ given by $D'$. Indeed, in general, these two coverings may be non homeomorphic surfaces (cf Examples in section \ref{final-examples}).
 \end{remark}
 
 \begin{remark}
 Theorem \ref{thm::derived-skew-gentle} can be used much more easily than Theorem~\ref{theorem::dual-derived equivalence}. Indeed, given two skew-gentle algebras $\bar{\Lambda}$ and $\bar{\Lambda}'$, it is enough to compute the winding numbers with respect to $\eta_{D}$ and $\eta_{D'}$ of some generators of the fundamental group of each surface $\pi_1(\surf\setminus X)$ and $\pi_1(\surf'\setminus X')$ and compare them using Theorem 1.2 in \cite{APS} to decide wether the algebras $\bar{\Lambda}$ and $\bar{\Lambda}'$ are derived equivalent or not. This is illustrated in the section below.
 \end{remark}
 
 Combining Theorem \ref{thm::derived-skew-gentle} with Remark \ref{remark::degeneration}, we obtain the following.
 \begin{corollary}
Let $\bar{A}$ and $\bar{B}$ be two skew-gentle algebras, and denote by $\bar{A}_0$ and $\bar{B}_0$ their corresponding gentle degenerations. If $\bar{A}$ and $\bar{B}$ are derived equivalent via a $\dual$-tilting object, then $\bar{A}_0$ and $\bar{B}_0$ are derived equivalent. 
 
 \end{corollary}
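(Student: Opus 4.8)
The plan is to transport the hypothesis through the geometric criterion of Theorem~\ref{thm::derived-skew-gentle} and then feed the resulting data into the geometric criterion for derived equivalence of ordinary gentle algebras. First I would invoke Theorem~\ref{thm::derived-skew-gentle}: since $\bar{A}$ and $\bar{B}$ are derived equivalent via a $\dual$-tilting object, there is an orientation-preserving diffeomorphism $\bar{\Phi}\colon\surf\to\surf'$ sending $M,P,X$ to $M',P',X'$ respectively and carrying $\eta_{D}$ to $\eta_{D'}$ up to homotopy, i.e.\ $\eta_D$ and $\bar{\Phi}^*(\eta_{D'})$ are homotopic. From this point on the tilting object on the skew-gentle side plays no further role; the entire content of the hypothesis is packaged into this diffeomorphism of orbifolds together with the homotopy of line fields.

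Next I would identify the two orbifold line fields with the line fields of the gentle degenerations. By Proposition~\ref{prop::bijection-skewgentle-dissection}, viewing the $\cross$-dissection $D$ as a $\rpoint$-dissection of the marked surface $(\surf,M,P\cup X)$ --- promoting each orbifold point to an interior $\rpoint$-point, so that each $\cross$-arc becomes a self-folded triangle --- yields the gentle algebra $A(D)$, which is exactly the gentle degeneration $\bar{A}_0$; likewise $A(D')\cong\bar{B}_0$. The crucial point, recorded in Remark~\ref{remark::degeneration}, is that the line field $\eta_D$ appearing in Theorem~\ref{thm::derived-skew-gentle} coincides with the line field associated to this $\rpoint$-dissection, namely the line field of $\bar{A}_0$ (and similarly $\eta_{D'}$ is the line field of $\bar{B}_0$).

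With these identifications in hand, $\bar{\Phi}$ becomes an orientation-preserving diffeomorphism of marked surfaces $(\surf,M,P\cup X)\to(\surf',M',P'\cup X')$ respecting all marked points and sending the line field of $\bar{A}_0$ to that of $\bar{B}_0$ up to homotopy. I would then conclude exactly as in the second half of the proof of Theorem~\ref{thm::derived-skew-gentle}, but now forgetting the $G$-action: setting $D'':=\bar{\Phi}^{-1}(D')$ one has $A(D'')\cong\bar{B}_0$, and the line field $\eta_{D''}$ is homotopic to $\bar{\Phi}^*(\eta_{D'})$ and hence to $\eta_D$, so that $w_D(\delta)=w_{D''}(\delta)$ for every $\delta\in\pi_1(\surf\setminus X)$. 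Theorem~\ref{theorem::APS}(2) then produces a grading of the $\gpoint$-dissection dual to $D''$ whose associated object is a tilting object of $\mathcal{D}^b(\bar{A}_0)$, and Theorem~\ref{theorem::APS}(1)(c) identifies its endomorphism algebra with $A(D'')\cong\bar{B}_0$; hence $\bar{A}_0$ and $\bar{B}_0$ are derived equivalent.

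The step requiring the most care is the identification of $\eta_D$ with the line field of $\bar{A}_0$, that is, checking that the foliation of $\surf$ defined by the $\cross$-dissection agrees up to homotopy on $\surf\setminus X$ with the foliation defined by the $\rpoint$-dissection producing $\bar{A}_0$; this is precisely where Remark~\ref{remark::degeneration} is used. It is also worth stressing that the derived equivalence obtained is only a plain (non-$\dual$-equivariant) one, which is all that the corollary asserts: a $\dual$-tilting object need not be $\dual$-invariant, so no equivariant conclusion for the degenerations can be expected.
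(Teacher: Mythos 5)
Your proposal is correct and takes essentially the same route as the paper, whose proof is simply the one-line combination of Theorem~\ref{thm::derived-skew-gentle} with Remark~\ref{remark::degeneration}; you have faithfully expanded that combination, using Theorem~\ref{theorem::APS} to convert the line-field-preserving diffeomorphism of $(\surf,M,P\cup X)$ into a tilting object for $\bar{A}_0$ with endomorphism algebra $\bar{B}_0$, exactly in the style of the paper's own arguments for Theorems~\ref{theorem::dual-derived equivalence} and~\ref{thm::derived-skew-gentle}. Your closing remarks (that the equivalence obtained is non-equivariant, and that the converse fails because a diffeomorphism of the degenerations could exchange points of $X$ and $P$) match the paper's own comments following the corollary.
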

 
 Note that the converse is not true in general. Indeed, if the gentle algebras $\bar{A}_0$ and $\bar{B}_0$ are derived equivalent, then there exists a diffeomorphism between the corresponding surfaces, but this diffeomorphism could a priori send a $\cross$ to a puncture or vice versa.

 \subsection{Examples}\label{final-examples}
 
 Consider the following four $\cross$-dissections $\bar{D}_1,\ldots,\bar{D}_4$ of the cylinder with two orbifold points and two marked points $(\surf,M,X)$ (the set $P$ is empty here), together with their corresponding skew-gentle algebras $\bar{\Lambda}_i$ as in Figure \ref{figure::example} (the special loops are indicated in red).
 
 Note that for $\bar{\Lambda}_2$ and $\bar{\Lambda}_4$ (resp. $\Lambda_2$ and $\Lambda_4$) the quivers are isomorphic, but the relations are different. Also note that the quiver of $\Lambda_3$ is a garland, but the relations are not anticommutative squares, they are quadratic monomial and the algebra is gentle, not skew-gentle.

\medskip
One checks that the covering surface $\widetilde{\surf}_1,\ldots,\widetilde{\surf}_4$ constructed in Theorem \ref{thm::construction covering} is a sphere with four holes for $\bar{\Lambda}_1$ and $\bar{\Lambda}_2$ while  it is  a torus with two holes   for $\bar{\Lambda}_3$ and $\bar{\Lambda}_4$ (see Example \ref{example::cylinder}). Therefore neither of $\bar{\Lambda}_1$ and $\bar{\Lambda}_2$ is  $\dual$-derived equivalent to $\bar{\Lambda}_3$ or $\bar{\Lambda}_4$, by Theorem \ref{theorem::dual-derived equivalence}.

Denote by $c_1$ and $c_2$ curves in $\pi_1(\surf\setminus X)$ surrounding the two boundary components. Computing the winding numbers of these curves for the dissection $\bar{D}_1$, we obtain $w_{\bar{D}_1}(c_1)=-2$ and $w_{\bar{D}_1}(c_2)=0$. The lift $\widetilde{c}_1$ of $c_1$ (resp. $\widetilde{c}_2$ of $c_2$) in $\widetilde{\surf}_1$ is a  closed curve, hence  by \eqref{w-wbar2} we have $$w_{D_1}(\widetilde{c}_1)=-2\quad \textrm{ and }\quad w_{D_1}(\widetilde{c}_2)=0.$$ 
So by symmetry, we obtain that the winding numbers of the four curves surrounding the boundary components of $\widetilde{\surf}_1$ with respect to $D_1$ are $(-2,0,-2,0)$. 
For $\bar{D}_2$, a similar argument shows that the four winding numbers are $(-1,-1,-1,-1)$ since $w_{\bar{D}_2}(c_1)=w_{\bar{D}_2}(c_2)=-1$. Therefore there are no diffeomorphism from $\widetilde{\surf}_1$ to $\widetilde{\surf}_2$ sending $\eta_{D_1}$ to a line field homotopic to $\eta_{D_2}$. By Theorem \ref{theorem::dual-derived equivalence}  the algebras $\bar{\Lambda}_1$ and $\bar{\Lambda}_2$ are then not $\dual$-equivalent (in fact the gentle algebras $\Lambda_1$ and $\Lambda_2$ are not even derived equivalent).

\medskip
For $\bar{\Lambda}_3$ and $\bar{\Lambda}_4$ we can use a similar argument. We have $w_{\bar{D}_3}(c_1)=w_{\bar{D}_3}(c_2)=-1$, but here a lift $\widetilde{c}_1$ of $c_1$ on $\widetilde{\surf}_{3}$ is not a closed curve. However, $\widetilde{c}_1.\sigma\widetilde{c}_1$ is a closed curve surrounding one the boundary component of $\widetilde{\surf}_3$. Therefore by \eqref{w-wbar3} we have that the winding numbers of the curves surrounding the boundary components of $\widetilde{\surf}_3$ are $(-2,-2)$. For $\bar{\Lambda}_4$ they are $(0,-4)$. Therefore there are no diffeomorphisms from $\widetilde{\surf}_3$ to $\widetilde{\surf}_4$ sending $\eta_{D_3}$ to a line field homotopic to $\eta_{D_4}$, and the algebras $\bar{\Lambda}_3$ and $\bar{\Lambda}_4$ are not $\dual$-equivalent.

\medskip

Now consider the surface $(\surf\setminus X,M)$ which is a cylinder with 2 punctures (the points in $X$) and two marked points on the boundary. In order to understand which of the algebras $\bar{\Lambda}_i$ are derived equivalent via a $\dual$-tilting object, we have to understand which of the surfaces with line field $(\surf\setminus X,\eta_{\bar{D}_i})$ are diffeomorphic. Using Theorem 6.4 in \cite{APS}, since the genus of $\surf\setminus X$ is zero, it is enough to compare the collections $(w_{\eta}(c),n(c))$, where $c$ describes the curves surrounding the boundary components, or the punctures, and where $n(c)$ is the number of marked points for the corresponding boundary, or $0$ if $c$ is surrounding a puncture. In our case, we have two curves $c_1$, $c_2$ surrounding the boundary components and two curves $c_3$ and $c_4$ surrounding the punctures. For $\cross$ in $X$, and any dissection $\bar{D}_i$, since there is exactly one arc with endpoint in $\cross$, we have $w_{\bar{\eta}_i}(c_3)=w_{\bar{\eta}_i}(c_4)=-1$.
Therefore the collection of $(w_{\eta}(c),n(c))$ for $\bar{\Lambda}_1$ is 
$$(-2,1),(0,1),(-1,0),(-1,0).$$
Doing the same computations for all the algebras $\bar{\Lambda}_2$, $\bar{\Lambda}_3$ and $\bar{\Lambda}_4$, we conclude that $\bar{\Lambda}_1$ and $\bar{\Lambda}_4$ are derived equivalent, and so are $\bar{\Lambda}_2$ and $\bar{\Lambda}_3$. Moreover $\bar{\Lambda}_1$ is not derived equivalent via a $\dual$-tilting object to $\bar{\Lambda}_2$.

\begin{landscape}
\begin{figure}[!p]
\caption{Example of the cylinder with two orbifold points}
\label{figure::example}
\begin{tabular}{|c|c|c|c|}

\hline

$(\surf, D)$ & $\bar{A}=\bar{\Lambda}_i$ & $(\widetilde{\surf},\sigma)$ & $\Lambda_i=(A,\sigma)$\\

\hline
 
 \scalebox{0.8}{
  \begin{tikzpicture}[scale=1,>=stealth]
 
 \draw (0,0)--(5,0)--(5,2)--(0,2)--(0,0);
\node at (0,0) {$\rpoint$};
\node at (5,0)  {$\rpoint$};
\node at (5,2) {$\rpoint$};
\node at (0,2) {$\rpoint$};
\node at (2,1) {$\cross$};
\node at (3,1) {$\cross$};

\draw[thick, red] (0,0)--node [black, xshift=-0.2cm]{$1$} (0,2);
\draw[thick, red] (5,0)--node [black, xshift=0.2cm]{$1$} (5,2);
\draw[thick, red] (5,2)--node [black,fill=white, inner sep=0pt]{$3$} (2,1);
\draw[thick, red] (5,2)--node [black,fill=white, inner sep=0pt]{$2$} (3,1);
\draw[thick, red] (0,0)..controls (2,2) and (4,1.8)..node [black,fill=white, inner sep=0pt]{$4$} (5,2);

\end{tikzpicture}}

 &
 
 \scalebox{0.8}{
  \begin{tikzpicture}[scale=1,>=stealth]

\node (1) at (0,0) {$1$};
\node[red]  (2) at (2,0) {$2$};
\node[red] (3) at (4,0) {$3$};
\node (4) at (6,0) {$4$};

\draw[->,thick] (1)--(2);
\draw[->,thick] (2)--(3);
\draw[->,thick] (3)--(4);
\draw[->,thick] (0,-0.2)..controls (0,-1) and (6,-1).. (6,-0.2);

\draw[loosely dotted,thick] (1.5,0) arc (180:360:0.5);
\draw[loosely dotted,thick] (3.5,0) arc (180:360:0.5);

\draw[->,thick] (1.9,0.1)..controls (1,1) and (3,1)..(2.1,0.1);
\draw[->,thick] (3.9,0.1)..controls (3,1) and (5,1)..(4.1,0.1);

\end{tikzpicture}}

&

 \scalebox{0.5}{
  \begin{tikzpicture}[scale=1,>=stealth,rotate=90]
\draw[thick] (3,5)--(3,-1);
\node at (3.3,4.7) {$\sigma$};

\draw[->] (2.8,4.5) arc (-180:80:0.2);
\shadedraw[bottom color=blue!30] (0,0)..controls (0.5,1) and (0.5,3)..(0,4)--(2,4)..controls (2,3.5) and (2.5,3)..(3,3)..controls (3.5,3) and (4,3.5).. (4,4)--(6,4)..controls (5.5,3) and (5.5,1)..(6,0)..controls (6,-0.5) and (4,-0.5)..(4,0)..controls (4,0.5) and (3.5,1)..(3,1).. controls (2.5,1) and ( 2,0.5)..(2,0)..controls (2,-0.5) and (0,-0.5)..(0,0);

\shadedraw[bottom color=red!30] (0,0)..controls (0.5,1) and (0.5,3)..(0,4)--(2,4).. controls (2,2) and (2.6,1)..(3,1).. controls (2.5,1) and ( 2,0.5)..(2,0)..controls (2,-0.5) and (0,-0.5)..(0,0);

\draw[fill=red!20] (1,4) ellipse (1 and 0.3);
\draw[fill=blue!20] (5,4) ellipse (1 and 0.3);

\draw[very thick, dark-green] (2,4)..controls (2,3.5) and (2.5,3)..(3,3)..controls (3.5,3) and (4,3.5).. (4,4);
\draw[very thick, dark-green] (2,4).. controls (2,2) and (2.6,1)..(3,1);
\draw[thick, loosely dotted, dark-green] (4,4).. controls (4,2) and (3.4,1)..(3,1);

\node at (3,1) {$\cross$};
\node at (3,3) {$\cross$};

\node at (2,4) {$\gpoint$};
\node at (4,4) {$\gpoint$};

\end{tikzpicture}}

 & \scalebox{0.6}{
  \begin{tikzpicture}[scale=0.7,>=stealth]
  
 \node (1+) at (0,2) {$1^+$};
 \node (1-) at (0,-2) {$1^-$};
 \node (2) at (2,0) {$2$};
 \node (3) at (5,0) {$3$};
 \node (4+) at (7,2) {$4^+$};
 \node (4-) at (7,-2) {$4^-$};
 \draw[thick,->] (1+)-- (2);
 \draw[thick, ->] (1-)--(2);
 \draw[thick,->] (2.3,0.2)-- (4.7,0.2);
 \draw[thick,->] (2.3,-0.2)-- (4.7,-0.2);
 \draw[thick,->] (3)--(4+);
 \draw[thick, ->] (3)-- (4-);
 \draw[thick, ->] (1+)-- (4+);
 \draw[thick, ->] (1-)-- (4-);
 
 \draw[loosely dotted,thick] (3,0.2) arc (0:135:1);
  \draw[loosely dotted,thick] (3,-0.2) arc (0:-135:1);
   \draw[loosely dotted,thick] (4,0.2) arc (180:45:1);
    \draw[loosely dotted,thick] (4,-0.2) arc (180:315:1);

  \end{tikzpicture}} \\

\hline
\scalebox{0.8}{
  \begin{tikzpicture}[scale=1,>=stealth]
 
 \draw (0,0)--(5,0)--(5,2)--(0,2)--(0,0);
\node at (0,0) {$\rpoint$};
\node at (5,0)  {$\rpoint$};
\node at (5,2) {$\rpoint$};
\node at (0,2) {$\rpoint$};
\node at (2,1) {$\cross$};
\node at (3,1) {$\cross$};

\draw[thick, red] (0,0)--node [black, xshift=-0.2cm]{$1$} (0,2);
\draw[thick, red] (5,0)--node [black, xshift=0.2cm]{$1$} (5,2);
\draw[thick, red] (0,0)--node [black,fill=white, inner sep=0pt]{$3$} (2,1);
\draw[thick, red] (5,2)--node [black,fill=white, inner sep=0pt]{$2$} (3,1);
\draw[thick, red] (0,0)..controls (2,2) and (4,1.8)..node [black,fill=white, inner sep=0pt]{$4$} (5,2);

\end{tikzpicture}}
 & 
 \scalebox{0.8}{
  \begin{tikzpicture}[scale=1,>=stealth]

\node (1) at (0,0) {$1$};
\node[red]  (2) at (2,0) {$2$};
\node[red] (3) at (6,0) {$3$};
\node (4) at (4,0) {$4$};

\draw[->,thick] (1)--(2);
\draw[->,thick] (2)--(4);
\draw[->,thick] (4)--(3);
\draw[->,thick] (0,-0.2)..controls (0,-1) and (4,-1).. (4,-0.2);

\draw[loosely dotted,thick] (1.5,0) arc (180:360:0.5);
\draw[loosely dotted,thick] (4.5,0) arc (0:-110:0.5);

\draw[->,thick] (1.9,0.1)..controls (1,1) and (3,1)..(2.1,0.1);
\draw[->,thick] (5.9,0.1)..controls (5,1) and (7,1)..(6.1,0.1);

\end{tikzpicture}}
 & \scalebox{0.5}{
  \begin{tikzpicture}[scale=1,>=stealth,rotate=90]
\draw[thick] (3,5)--(3,-1);
\node at (3.3,4.7) {$\sigma$};

\draw[->] (2.8,4.5) arc (-180:80:0.2);
\shadedraw[bottom color=blue!30] (0,0)..controls (0.5,1) and (0.5,3)..(0,4)--(2,4)..controls (2,3.5) and (2.5,3)..(3,3)..controls (3.5,3) and (4,3.5).. (4,4)--(6,4)..controls (5.5,3) and (5.5,1)..(6,0)..controls (6,-0.5) and (4,-0.5)..(4,0)..controls (4,0.5) and (3.5,1)..(3,1).. controls (2.5,1) and ( 2,0.5)..(2,0)..controls (2,-0.5) and (0,-0.5)..(0,0);

\shadedraw[bottom color=red!30] (0,0)..controls (0.5,1) and (0.5,3)..(0,4)--(2,4).. controls (2,2) and (2.6,1)..(3,1).. controls (2.5,1) and ( 2,0.5)..(2,0)..controls (2,-0.5) and (0,-0.5)..(0,0);

\draw[fill=red!20] (1,4) ellipse (1 and 0.3);
\draw[fill=blue!20] (5,4) ellipse (1 and 0.3);

\draw[very thick, dark-green] (2,4)..controls (2,3.5) and (2.5,3)..(3,3)..controls (3.5,3) and (4,3.5).. (4,4);
\draw[very thick, dark-green] (2,4).. controls (2,2) and (2.6,1)..(3,1);
\draw[thick, loosely dotted, dark-green] (4,4).. controls (4,2) and (3.4,1)..(3,1);

\node at (3,1) {$\cross$};
\node at (3,3) {$\cross$};

\node at (2,4) {$\gpoint$};
\node at (4,4) {$\gpoint$};

\end{tikzpicture}}
 &
 
\scalebox{0.6}{
  \begin{tikzpicture}[scale=0.7,>=stealth]
  
 \node (1+) at (0,2) {$1^+$};
 \node (1-) at (0,-2) {$1^-$};
 \node (2) at (2,0) {$2$};
 \node (3) at (6,0) {$3$};
 \node (4+) at (4,2) {$4^+$};
 \node (4-) at (4,-2) {$4^-$};
 \draw[thick,->] (1+)-- (2);
 \draw[thick, ->] (1-)--(2);
 \draw[thick,->] (4+)-- (3);
 \draw[thick,->] (4-)-- (3);
 \draw[thick,->] (2)--(4+);
 \draw[thick, ->] (2)-- (4-);
 \draw[thick, ->] (1+)-- (4+);
 \draw[thick, ->] (1-)-- (4-);
 
 \draw[loosely dotted, thick] (2.5,0.5) arc (45:135:0.7);
  \draw[loosely dotted, thick] (2.5,-0.5) arc (-45:-135:0.7);
   \draw[loosely dotted,thick] (3,2) arc (180:-45:1);
    \draw[loosely dotted,thick] (3,-2) arc (-180:45:1);

  \end{tikzpicture}}
 
 \\
 \hline
\scalebox{0.8}{
  \begin{tikzpicture}[scale=1,>=stealth]
 
 \draw (0,0)--(5,0)--(5,2)--(0,2)--(0,0);
\node at (0,0) {$\rpoint$};
\node at (5,0)  {$\rpoint$};
\node at (5,2) {$\rpoint$};
\node at (0,2) {$\rpoint$};
\node at (2,1) {$\cross$};
\node at (3,1) {$\cross$};

\draw[thick, red] (0,0)--node [black, xshift=-0.2cm]{$1$} (0,2);
\draw[thick, red] (5,0)--node [black, xshift=0.2cm]{$1$} (5,2);
\draw[thick, red] (0,0)--node [black,fill=white, inner sep=0pt]{$3$} (2,1);
\draw[thick, red] (5,2)--node [black,fill=white, inner sep=0pt]{$2$} (3,1);
\draw[thick, red] (0,0)..controls (2,0.5) and (3,1.5)..node [black,fill=white, inner sep=0pt]{$4$} (5,2);
\end{tikzpicture}}
 & 

 \scalebox{0.8}{
  \begin{tikzpicture}[scale=1,>=stealth]

\node (1) at (0,0) {$1$};
\node[red]  (2) at (2,1) {$2$};
\node[red] (3) at (2,-1) {$3$};
\node (4) at (4,0) {$4$};

\draw[->,thick] (1)--(2);
\draw[->,thick] (2)--(4);
\draw[->,thick] (1)--(3);
\draw[->,thick] (3)--(4);

\draw[loosely dotted,thick] (1,0.5)--(3,0.5); 
\draw[loosely dotted,thick] (1,-0.5)--(3,-0.5); 

\draw[->,thick] (1.9,1.1)..controls (1,2) and (3,2)..(2.1,1.1);
\draw[->,thick] (1.9,-1.1)..controls (1,-2) and (3,-2)..(2.1,-1.1);

\end{tikzpicture}}
 &
 \scalebox{0.5}{
  \begin{tikzpicture}[scale=1,>=stealth,rotate=90]

\draw (1,-2)--(1,-0.5);

\shadedraw[bottom color=red!30] (0,0).. controls (0,-0.5) and (2,-0.5)..(2,0).. controls (2,1) and (3,2)..(3,3)..controls (3,4) and (2,5)..(2,6)--(0,6)..controls (0,5) and (-1,4)..(-1,3)..controls (-1,2) and (0,1)..(0,0);

\shadedraw[bottom color=blue!30] (0,0).. controls (0,1) and (0.5,2.75)..(1,2.75)--(1,3.25)..controls (1.5,3.25) and (2,5)..(2,6)--(0,6)..controls (0,5) and (-1,4)..(-1,3)..controls (-1,2) and (0,1)..(0,0);

\draw[fill=red!20] (1,6) ellipse (1 and 0.3);
\draw[fill=white] (0.5,3)..controls (0.6,2.9) and (0.8,2.75)..(1,2.75)..controls (1.2,2.75) and (1.4,2.9).. (1.5,3)..controls (1.4,3.1) and (1.2,3.25)..(1,3.25)..controls (0.8,3.25) and (0.4,3.1)..(0.5,3);

\draw[dark-green, very thick] (0,0).. controls (0,1) and (0.5,2.75)..(1,2.75);
\draw[dark-green, very thick, loosely dotted] (1,2.75).. controls (1.5,2.75) and (2,1)..(2,0);
\draw[dark-green, very thick,loosely dotted] (0,6).. controls (0,5) and (0.5,3.25)..(1,3.25);
\draw[dark-green, very thick] (2,6)..controls (2,5) and (1.5,3.25)..(1,3.25);

\draw(0.5,3)--(0.3,3.2);\draw(1.5,3)--(1.7,3.2);

\node at (0,0){$\gpoint$};
\node at (2,0) {$\gpoint$};
\node at (0,6){$\gpoint$};
\node at (2,6) {$\gpoint$};
\draw[thick] (1,-2)--(1,-0.4);
\draw[thick] (1,2.75)--(1,3.25);
\draw[thick] (1,5.7)--(1,7);

\draw[->] (0.8,-1) arc (-180:80:0.2);
\node at (1,2.75) {$\cross$};
\node at (1,3.25) {$\cross$};

\end{tikzpicture}}
 & 
 
\scalebox{0.6}{
  \begin{tikzpicture}[scale=0.7,>=stealth]
  
 \node (1+) at (0,1.5) {$1^+$};
 \node (1-) at (0,-1.5) {$1^-$};
 \node (2) at (3,1.5) {$2$};
 \node (3) at (3,-1.5) {$3$};
 \node (4+) at (6,1.5) {$4^+$};
 \node (4-) at (6,-1.5) {$4^-$};
 \draw[thick,->] (1+)-- (2);
 \draw[thick, ->] (1-)--(2);
 \draw[thick,->] (1+)-- (3);
 \draw[thick,->] (1-)-- (3);
 \draw[thick,->] (2)--(4+);
 \draw[thick, ->] (2)-- (4-);
 \draw[thick, ->] (3)-- (4+);
 \draw[thick, ->] (3)-- (4-);
 
 \draw[loosely dotted, thick] (3.7,1.5) arc (0:180:0.7);
  \draw[loosely dotted, thick] (3.7,-1.5) arc (0:-180:0.7);
   \draw[loosely dotted,thick] (3.7,0.7) arc (-45:-135:1);
    \draw[loosely dotted,thick] (3.7,-0.7) arc (45:135:1);

  \end{tikzpicture}} 
 \\
 \hline

\scalebox{0.8}{
  \begin{tikzpicture}[scale=1,>=stealth]
 
 \draw (0,0)--(5,0)--(5,2)--(0,2)--(0,0);
\node at (0,0) {$\rpoint$};
\node at (5,0)  {$\rpoint$};
\node at (5,2) {$\rpoint$};
\node at (0,2) {$\rpoint$};
\node at (2,1) {$\cross$};
\node at (3,1) {$\cross$};

\draw[thick, red] (0,0)--node [black, xshift=-0.2cm]{$1$} (0,2);
\draw[thick, red] (5,0)--node [black, xshift=0.2cm]{$1$} (5,2);
\draw[thick, red] (0,0)--node [black,fill=white, inner sep=0pt]{$3$} (2,1);
\draw[thick, red] (0,0)..controls (2,0.2) and (2.5,0.8)..node [black,fill=white, inner sep=0pt]{$2$} (3,1);
\draw[thick, red] (0,0)..controls (2,0.5) and (3,1.5)..node [black,fill=white, inner sep=0pt]{$4$} (5,2);

\end{tikzpicture}}
 & 
 \scalebox{0.8}{
  \begin{tikzpicture}[scale=1,>=stealth]

\node (1) at (0,0) {$1$};
\node[red]  (2) at (2,0) {$3$};
\node[red] (3) at (6,0) {$2$};
\node (4) at (4,0) {$4$};

\draw[->,thick] (1)--(2);
\draw[->,thick] (2)--(4);
\draw[->,thick] (4)--(3);
\draw[->,thick] (0,-0.2)..controls (0,-1) and (4,-1).. (4,-0.2);

\draw[loosely dotted,thick] (1.5,0) arc (180:360:0.5);
\draw[loosely dotted,thick] (4.5,0) arc (0:180:0.5);

\draw[->,thick] (1.9,0.1)..controls (1,1) and (3,1)..(2.1,0.1);
\draw[->,thick] (5.9,0.1)..controls (5,1) and (7,1)..(6.1,0.1);

\end{tikzpicture}}
 &  \scalebox{0.5}{
  \begin{tikzpicture}[scale=1,>=stealth,rotate=90]

\draw (1,-2)--(1,-0.5);

\shadedraw[bottom color=red!30] (0,0).. controls (0,-0.5) and (2,-0.5)..(2,0).. controls (2,1) and (3,2)..(3,3)..controls (3,4) and (2,5)..(2,6)--(0,6)..controls (0,5) and (-1,4)..(-1,3)..controls (-1,2) and (0,1)..(0,0);

\shadedraw[bottom color=blue!30] (0,0).. controls (0,1) and (0.5,2.75)..(1,2.75)--(1,3.25)..controls (1.5,3.25) and (2,5)..(2,6)--(0,6)..controls (0,5) and (-1,4)..(-1,3)..controls (-1,2) and (0,1)..(0,0);

\draw[fill=red!20] (1,6) ellipse (1 and 0.3);
\draw[fill=white] (0.5,3)..controls (0.6,2.9) and (0.8,2.75)..(1,2.75)..controls (1.2,2.75) and (1.4,2.9).. (1.5,3)..controls (1.4,3.1) and (1.2,3.25)..(1,3.25)..controls (0.8,3.25) and (0.4,3.1)..(0.5,3);

\draw[dark-green, very thick] (0,0).. controls (0,1) and (0.5,2.75)..(1,2.75);
\draw[dark-green, very thick, loosely dotted] (1,2.75).. controls (1.5,2.75) and (2,1)..(2,0);
\draw[dark-green, very thick,loosely dotted] (0,6).. controls (0,5) and (0.5,3.25)..(1,3.25);
\draw[dark-green, very thick] (2,6)..controls (2,5) and (1.5,3.25)..(1,3.25);

\draw(0.5,3)--(0.3,3.2);\draw(1.5,3)--(1.7,3.2);

\node at (0,0){$\gpoint$};
\node at (2,0) {$\gpoint$};
\node at (0,6){$\gpoint$};
\node at (2,6) {$\gpoint$};
\draw[thick] (1,-2)--(1,-0.4);
\draw[thick] (1,2.75)--(1,3.25);
\draw[thick] (1,5.7)--(1,7);

\draw[->] (0.8,-1) arc (-180:80:0.2);
\node at (1,2.75) {$\cross$};
\node at (1,3.25) {$\cross$};

\end{tikzpicture}}&

\scalebox{0.6}{
  \begin{tikzpicture}[scale=0.7,>=stealth]
  
 \node (1+) at (0,2) {$1^+$};
 \node (1-) at (0,-2) {$1^-$};
 \node (3) at (2,0) {$3$};
 \node (2) at (6,0) {$2$};
 \node (4+) at (4,2) {$4^+$};
 \node (4-) at (4,-2) {$4^-$};
 \draw[thick,->] (1+)-- (3);
 \draw[thick, ->] (1-)--(3);
 \draw[thick,->] (4+)-- (2);
 \draw[thick,->] (4-)-- (2);
 \draw[thick,->] (3)--(4+);
 \draw[thick, ->] (3)-- (4-);
 \draw[thick, ->] (1+)-- (4+);
 \draw[thick, ->] (1-)-- (4-);
 
 \draw[loosely dotted, thick] (2.5,0.5) arc (45:135:0.7);
  \draw[loosely dotted, thick] (2.5,-0.5) arc (-45:-135:0.7);
   \draw[loosely dotted,thick] (4.5,1.5) arc (-45:-135:0.7);
    \draw[loosely dotted,thick] (4.5,-1.5) arc (45:135:0.7);

  \end{tikzpicture}}\\
 \hline
\end{tabular}

\end{figure}
\end{landscape}

\end{document}